\tikzstyle{edge} = [fill,opacity=.5,fill opacity=.5,line cap=round, line join=round, line width=50pt]
\theoremstyle{plain}
\theoremstyle{definition}
\newtheorem{theorem}{Theorem}[section]
\newtheorem{lemma}[theorem]{Lemma}
\newtheorem{definition}[theorem]{Definition}
\newtheorem{question}[theorem]{Question}
\newtheorem{requirement}[theorem]{Requirement}
\newtheorem{example}[theorem]{Example}
\newtheorem{proposition}[theorem]{Proposition}
\newtheorem{corollary}[theorem]{Corollary}
\DeclareMathAlphabet{\mathpzc}{OT1}{pzc}{m}{it}
\newcommand{\scale}[1]{\scalebox{1}{$\scriptscriptstyle ({#1})$}}
\newcommand{\longelt}[1]{w_0^{\scale{#1}}}
\newcommand{\perim}{\textsf{perim}}
\newcommand{\leftperim}{\textsf{L-perim}}
\newcommand{\rightperim}{\textsf{R-perim}}
\newcommand{\topperim}{\textsf{T-perim}}
\newcommand{\bottomperim}{\textsf{B-perim}}
\newcommand{\omitt}[1]{}
\begin{document}

\title{Tiling-based models of perimeter and area}

\author{Bridget Eileen Tenner}
\address{Department of Mathematical Sciences, DePaul University, Chicago, IL, USA}
\email{bridget@math.depaul.edu}
\thanks{Research partially supported by a DePaul University Faculty Summer Research Grant and by Simons Foundation Collaboration Grant for Mathematicians 277603.}

\keywords{}

\subjclass[2010]{Primary: 05B45; 
Secondary: 52B60, 
20F55, 
52C20
}

\begin{abstract}
We consider polygonal tilings of certain regions and use these to give intuitive definitions of tiling-based perimeter and area. We apply these definitions to rhombic tilings of Elnitsky polygons, computing sharp bounds and average values for perimeter tiles in convex centrally symmetric $2n$-gons. These bounds and values have implications for the combinatorics of reduced decompositions of permutations. We also classify the permutations whose polygons gave minimal perimeter, defined in two different ways. We conclude by looking at some of these questions in the context of domino tilings, giving a recursive formula and generating function for one family, and describing a family of minimal-perimeter regions. 
\end{abstract}

\maketitle

\section{Introduction}\label{sec:intro}

Isoperimetric problems involve relationships between area and perimeter, and their study has a long history in geometry and analysis (see, for example, \cite{lusternik, stein shakarchi, steiner}). One can phrase this type of problem from two vantage points: fix the area and try to minimize perimeter, or fix the perimeter and try to maximize area. The ``answer'' to the classical \emph{contour-based} version of this problem, which was long suspected and has been known for over a century, is a circle.

\begin{theorem}[Classical contour-based isoperimetric theorem]\label{thm:classical isoperimetry}
Let $\Omega$ be a region bounded by a simple closed curve in the Euclidean plane. The Lebesgue measures for the length $P(\Omega)$ of that closed curve and the area $A(\Omega)$ of its enclosure $\Omega$ are related by
$$4\pi A(\Omega) \le P(\Omega)^2,$$
with equality if and only if the curve is a circle and $\Omega$ is a disk.
\end{theorem}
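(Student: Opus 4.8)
The plan is to give the classical analytic argument due to Hurwitz, which produces the inequality and the equality characterization simultaneously from a single sum-of-squares identity. The first step is a reduction: I would first establish the result assuming the simple closed curve is smooth and parametrized with constant speed on $[0,2\pi]$, so that $x'(t)^2 + y'(t)^2 = (P/2\pi)^2$ is constant, and then recover the general statement by approximation. This constant-speed normalization is the device that ties the arc-length integral directly to the Fourier data of the coordinate functions.

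With this parametrization, the length satisfies $\int_0^{2\pi}(x'^2+y'^2)\,dt = P^2/(2\pi)$, while Green's theorem expresses the enclosed area as $A = \oint x\,dy = \int_0^{2\pi} x(t)\,y'(t)\,dt$. I would then expand the periodic functions $x$ and $y$ in real Fourier series, with coefficients $a_n,b_n$ for $x$ and $c_n,d_n$ for $y$, and apply Parseval's identity to the length integral together with the orthogonality relations to the area integral. This converts both quantities into series in the coefficients: the length becomes $\pi\sum_{n\ge1} n^2(a_n^2+b_n^2+c_n^2+d_n^2)$ and the area becomes $\pi\sum_{n\ge1} n(a_nd_n - b_nc_n)$.

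The heart of the proof is then a term-by-term inequality. Forming $P^2 - 4\pi A$ and dividing by $2\pi^2$ reduces everything to showing that each summand $n^2(a_n^2+b_n^2+c_n^2+d_n^2) - 2n(a_nd_n - b_nc_n)$ is nonnegative, which I would do by completing squares to write it as $(na_n - d_n)^2 + (nb_n + c_n)^2 + (n^2-1)(c_n^2+d_n^2)$, manifestly a sum of squares. This gives $4\pi A \le P^2$ immediately. For equality, every summand must vanish: the factor $(n^2-1)$ forces all coefficients with $n\ge2$ to be zero, while the $n=1$ terms force $d_1=a_1$ and $c_1=-b_1$. Substituting back shows that $x(t)=a_0+a_1\cos t+b_1\sin t$ and $y(t)=c_0-b_1\cos t+a_1\sin t$ parametrize a circle of radius $\sqrt{a_1^2+b_1^2}$ centered at $(a_0,c_0)$, exactly as claimed.

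The main obstacle I anticipate is not the elegant computation but the two regularity reductions bracketing it. To pass from the smooth case to the stated hypothesis of a merely rectifiable boundary with Lebesgue-measurable enclosure, I must exhibit an approximation scheme under which both $P(\Omega)$ and $A(\Omega)$ converge, and this must be done carefully enough that the equality case is not lost in the limit; a clean route is to prove the strict inequality for all non-circular smooth curves and argue separately that no non-smooth extremizer can exist. One must also justify the constant-speed reparametrization and the applicability of Green's theorem in the relevant regularity class. An alternative geometric route is Steiner symmetrization, which makes the extremality of the disk transparent but shifts the difficulty onto proving existence of a minimizer (a compactness argument); in either approach, these measure-theoretic and existence issues, rather than the inequality itself, are where the real work lies.
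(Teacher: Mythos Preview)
The paper does not prove this theorem. It is stated in the introduction as the classical result (with references to the literature, e.g.\ Lusternik, Stein--Shakarchi, Steiner) purely to motivate the discrete tiling-based analogues that the paper actually studies; no proof is given or intended. So there is no ``paper's own proof'' against which to compare your proposal.

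That said, your outline is the standard Hurwitz Fourier-series argument and is correct in its essentials. The computation you describe --- constant-speed parametrization, Parseval on the length integral, Green's theorem for the area, and the termwise sum-of-squares identity --- does yield $4\pi A\le P^2$ with the circle as the unique smooth equality case. You are also right that the genuine content lies in the bracketing reductions: justifying the passage from rectifiable Jordan curves to smooth ones while preserving both the inequality and the rigidity of the equality case. Your remark that one can alternatively run Steiner symmetrization and shift the burden to a compactness/existence argument is accurate as well. None of this, however, bears on the paper, which treats the theorem as received background.
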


Discrete versions of isoperimetric problems exist in the literature, particularly within the context of graph theory (see, for example, \cite{chung, harper, woess}). Another discrete-flavored field of isoperimetric research concerns least-perimeter (under the Lebesgue measure) tilings of the plane, subject to possible tile restrictions, as discussed in \cite{chung et al, hales}.

In recent work with Duchin, we studied a frequent application of the isoperimetric problem to political science, and analyzed important flaws in that usage \cite{duchin-tenner}. In legal and political science literature, isoperimetric quotients of geographic regions are computed using contour-based measures of area and perimeter. These scores, and their relative rankings, have legal ramifications and electoral impact. However, contour-based calculations rely inherently on the way regions and curves are drawn on a surface, whereas the political redistricting problem begins, implicitly, with a graph of census units and their geographical/graphical adjacency. Any notion of ``area'' or ``perimeter'' for a collection of those units (such as those forming a district) should acknowledge this underlying structure. 

This paper takes the discretization of isoperimetric questions in a tiling-based direction. We turn our attention to polygonal tilings of bounded regions, and study the relationship between natural notions of ``area'' and ``perimeter'' in this context. We will use rhombic tilings of so-called ``Elnitsky polygons'' as our primary mechanism for this analysis. Such tilings of this family of polygons have algebraic significance, and the area and perimeter metrics that we develop for them will have significance as well. Another major area of tiling research concerns domino tilings, and we conclude this paper by turning our isoperimetric attention in that direction.

In Section~\ref{sec:background} of this work, we present motivating material about the symmetric group, and Corollary~\ref{cor:meaning of perimeter tiles} gives algebraic context to the notion of perimeter tiles. In Section~\ref{sec:long element}, we look at tiling-based perimeter questions for regular $2n$-gons, both in the extreme and average cases. We give sharp upper and lower bounds for the number of perimeter tiles in rhombic tilings of these regions in Theorems~\ref{thm:lower bound for total perimeter tiles} and~\ref{thm:upper bound for total perimeter tiles}, and present the Coxeter-theoretic significance of these results in Corollary~\ref{cor:max/min implications for commutation classes}. Section~\ref{sec:general perm} turns these isoperimetric questions to arbitrary permutations, and the main results of that section are Theorems~\ref{thm:exactly two perim tiles} and~\ref{thm:exactly two side-perim tiles} where we classify the permutations that can only ever have two perimeter or two side-perimeter tiles, respectively. We turn briefly to domino tilings in Section~\ref{sec:domino}, computing extreme and average values (Propositions~\ref{prop:2xn domino bounds} and~\ref{prop:domino total}) for rectangles. We also look at perimeter properties for non-rectangular regions, including a description of regions with minimally many perimeter tiles. We conclude the paper with a discussion of possible topics for future research.

\section{Background and motivation}\label{sec:background}

In this section, we introduce the objects and terminology related to the combinatorics of reduced decompositions of permutations. The reader is referred to \cite{bjorner-brenti} for more details.

Coxeter groups are generated by simple reflections. Minimally long expressions of a Coxeter group element in terms of these simple reflections are \emph{reduced decompositions} of that element. For a group element $w$, we write $R(w)$ for the collection of these reduced decompositions. The set $R(w)$ can be partitioned by the \emph{commutation} relation $\sim$, identifying reduced decompositions when they differ only by a sequence of commutation moves. The result is the collection $C(w) = R(w)/\!\sim$ of \emph{commutation classes} of $w$.

This paper is concerned with the finite Coxeter group of type $A$: the \emph{symmetric group}.

\begin{definition}
Let $\mathfrak{S}_n$ be the symmetric group on $[n]:=\{1,\ldots,n\}$. The elements of $\mathfrak{S}_n$ are \emph{permutations}, and we can write a permutation $w$ in \emph{one-line notation} as $w(1)w(2)\cdots w(n)$. For $i \in [n-1]$, let $s_i$ be the simple reflection transposing $i$ and $i+1$. Every permutation can be written as a product of simple reflections. The minimal length $\ell(w)$ of a product of simple reflections needed to represent $w$ is the \emph{length} of $w$, and this is equal to the number of \emph{inversions} in $w$: $\ell(w) = \# \{i < j : w(i) > w(j)\}$.
\end{definition}

Permutations are functions, and we compose them via: $w s_i$ transposes the values in positions $i$ and $i+1$ in $w$, while $s_i w$ transposes the positions of the values $i$ and $i+1$ in $w$.

We will use $42153 \in \mathfrak{S}_5$ as an example throughout this section.

\begin{example}\label{ex:42153 reduced decompositions}
The permutation $42153$ has eleven reduced decompositions
\begin{align*}
R(42153) = \{&s_1s_3s_2s_1s_4,\ s_1s_3s_2s_4s_1,\ s_1s_3s_4s_2s_1,\ s_3s_1s_2s_1s_4,\ s_3s_1s_2s_4s_1,\ s_3s_1s_4s_2s_1,\\ 
&s_3s_2s_1s_2s_4,\ s_3s_2s_1s_4s_2,\ s_3s_2s_4s_1s_2,\ s_3s_4s_1s_2s_1,\ s_3s_4s_2s_1s_2\},
\end{align*}
and two commutation classes
\begin{align*}
C(421&53) = \big\{
\{s_3s_2s_1s_2s_4,\ s_3s_2s_1s_4s_2,\ s_3s_2s_4s_1s_2,\ s_3s_4s_2s_1s_2\},\\
&\{s_1s_3s_2s_1s_4,\ s_1s_3s_2s_4s_1,\ s_1s_3s_4s_2s_1,\ s_3s_1s_2s_1s_4,\ s_3s_1s_2s_4s_1,\ s_3s_1s_4s_2s_1,\ s_3s_4s_1s_2s_1\}
\big\}.
\end{align*}
\end{example}

Reduced decompositions, commutation classes, and related objects are of interest from both algebraic and enumerative perspectives (see, for example, \cite{bedard, bergeron-ceballos-labbe, bjs, elnitsky, fishel-milicevic-patrias-tenner, jonsson-welker, meng, stanley, tenner patt-bru, tenner rdpp, tenner rwm, zollinger}). There is a wealth of interesting mathematics in these objects, and many open questions. In \cite{elnitsky}, Elnitsky gave a bijection between the commutation classes $C(w)$ and the rhombic tilings of a polygon $X(w)$. We explored that relationship further in \cite{tenner rdpp, tenner rwm}. Elnitsky's bijection provides important context for the work of this paper.

\begin{definition}\label{defn:elnitsky's polygon}
For $w \in \mathfrak{S}_n$, the polygon $X(w)$ is an equilateral $2n$-gon defined as follows: starting at the topmost vertex, label the sides $1,\ldots,n,w(n),\ldots,w(1)$ in counterclockwise order; the first (leftmost) $n$ of these sides (labeled $1,\ldots,n$) form half of a convex $2n$-gon; the remaining $n$ sides are drawn so that sides are parallel if and only if they have the same label. This $X(w)$ is \emph{Elnitsky's polygon} for $w$. A polygon obtained in this way for some permutation is an \emph{Elnitsky polygon}.
\end{definition}

\begin{example}
Elnitsky's polygon $X(42153)$ appears in Figure~\ref{fig:X(42153)}.

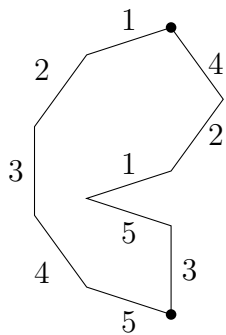
\begin{figure}[htbp]
\begin{tikzpicture}
\begin{scope}
\clip (.2,2) rectangle (-2,-2); 
\node[draw=none,minimum size=4cm,regular polygon,regular polygon sides=10] (a) {};
\foreach \x in {(a.side 1),(a.side 6)} {\fill \x circle (2pt);}
\end{scope}
\foreach \x in {1,2,3,4,5,6} {\coordinate (corner \x) at (a.side \x);}
\foreach \y [evaluate=\y as \x using \y+1] in {1,2,3,4,5} {\coordinate (side \y) at ($(corner \x)-(corner \y)$);}
\foreach \y [evaluate=\y as \x using \y+1] in {1,2,3,4,5} {\coordinate (side -\y) at ($(corner \y)-(corner \x)$);}
\draw (corner 1) -- (corner 2) -- (corner 3) -- (corner 4) -- (corner 5) -- (corner 6);
\draw (corner 1) --++ (side 4) coordinate (corner 10) --++ (side 2) coordinate (corner 9) --++ (side 1) coordinate (corner 8) --++ (side 5) coordinate (corner 7) --++ (side 3);
\draw ($(corner 1)!0.5!(corner 2)$) node[above] {$1$};
\draw ($(corner 2)!0.5!(corner 3)$) node[above left] {$2$};
\draw ($(corner 3)!0.5!(corner 4)$) node[left] {$3$};
\draw ($(corner 4)!0.5!(corner 5)$) node[below left] {$4$};
\draw ($(corner 5)!0.5!(corner 6)$) node[below] {$5$};
\draw ($(corner 6)!0.5!(corner 7)$) node[right] {$3$};
\draw ($(corner 7)!0.5!(corner 8)$) node[below] {$5$};
\draw ($(corner 8)!0.5!(corner 9)$) node[above] {$1$};
\draw ($(corner 9)!0.5!(corner 10)$) node[right] {$2$};
\draw ($(corner 10)!0.5!(corner 1)$) node[right] {$4$};
\end{tikzpicture}
\caption{Elnitsky's polygon $X(42153)$.}
\label{fig:X(42153)}
\end{figure}
\end{example}

If $\{w(1),\ldots,w(r)\} = \{1,\ldots, r\}$ for some $r < n$, then the interior of $X(w)$ is not contiguous. Here, we wish to ensure nonempty and contiguous polygonal area.

\begin{requirement}\label{req:contiguity}
Throughout this paper, if $w \in \mathfrak{S}_n$, then
\begin{enumerate}\renewcommand{\labelenumi}{(\alph{enumi})}
\item $n > 1$, and
\item $\{w(1),\ldots,w(r)\} \neq \{1,\ldots, r\}$ for all $r < n$.
\end{enumerate}
\end{requirement}

Definition~\ref{defn:elnitsky's polygon} does not specify angles beyond requiring convexity for the left half of the region. The requirement that $X(w)$ be equilateral could be replaced by requiring that same-labeled sides be congruent (as well as parallel). This is a superficial distinction and has no impact on the mathematics. The infinitely many ways to draw $X(w)$ are combinatorially equivalent, and we will refer to them as ``$X(w)$'' without distinction. We may omit edge labels in $X(w)$, but will compensate by marking the top and bottom vertices of the polygon.

\begin{definition}
The counterclockwise path from the top vertex to the bottom vertex in an Elnitsky polygon is the \emph{leftside} boundary of the polygon, and the clockwise path from the top vertex to the bottom vertex is the \emph{rightside} boundary. Note that the top and bottom vertices are in both the leftside and rightside boundaries of $X(w)$.
\end{definition}

The ``left'' and ``right'' sides of a tile are consistent with those of the surrounding polygon.

Elnitsky's work gives a bijection between commutation classes of a permutation and rhombic tilings of these polygons.

\begin{definition}
For a permutation $w$, a \emph{rhombic tiling} of $X(w)$ is a tiling in which all tile edges are congruent and parallel to edges of $X(w)$. The set of rhombic tilings of $X(w)$ is denoted $T(w)$.
\end{definition}

By ``path'' in a tiling, we will mean a path along tile edges. For $w \in \mathfrak{S}_n$ and a tiling in $T(w)$, any shortest path from the top vertex of $X(w)$ to the bottom will have exactly $n$ edges. This gives a sense of a tile's vertical position.

\begin{definition}
Fix $w$ and $T \in T(w)$. Let $t$ be a tile in $T$. If the topmost vertex of $t$ is $d$ edges from the top vertex of $X(w)$ in any shortest path, then the tile $t$ has \emph{depth} $d+1$.
\end{definition}

We are now ready to state Elnitsky's bijection.

\begin{theorem}[{\cite[Theorem 2.2]{elnitsky}}]\label{thm:elnitsky}
Fix $w \in \mathfrak{S}_n$ and $T \in T(w)$. There are $\ell(w)$ tiles in $T$. Label the tiles $1, 2, \ldots, \ell(w)$ so that the rightmost edges of a tile $t$ are shared with the rightside boundary of $X(w)$ and/or with tiles whose labels are less than the label of $t$. (Equivalently, the rightmost edges of a tile $t$ are not shared with any tiles whose labels are greater than the label of $t$.) This labeling corresponds to a reduced decomposition
$$s_{i_{\ell(w)}} \cdots s_{i_2} s_{i_1} \in R(w),$$
where $i_a = d$ if the tile labeled $a$ has depth $d$. The map from validly labeled tilings to reduced decompositions is a bijection. Moreover, two reduced decompositions correspond to different labelings of the same (unlabeled) tiling if and only if they belong to the same commutation class. In this way, a rhombic tiling of $X(w)$ produces an entire (and unique) commutation class of $R(w)$. 
\end{theorem}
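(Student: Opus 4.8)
The plan is to realize Elnitsky's correspondence by sweeping shortest top-to-bottom paths across a tiling and reading off one adjacent transposition per tile. For any shortest path $P$ from the top vertex to the bottom vertex of $X(w)$, let $\omega(P)$ be the length-$n$ word recording the direction classes (edge labels) of its edges in order. The leftside boundary then gives $\omega = 1\,2\cdots n$ and the rightside boundary gives $\omega = w(1)\,w(2)\cdots w(n)$. The local fact that drives everything is this: if $P$ follows the two left edges of a single tile $t$ of depth $d$, then pushing $P$ across $t$ onto its two right edges yields a path $P'$ whose word $\omega(P')$ is obtained from $\omega(P)$ by transposing the entries in positions $d$ and $d+1$, which is exactly the action of the simple reflection $s_d$. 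I would establish this by checking that the two left edges of a depth-$d$ tile occupy path-positions $d$ and $d+1$, and that a rhombus flip exchanges precisely those two direction classes.

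Granting this dictionary, I would first count tiles. In a zonogonal tiling the two direction classes meeting in a tile ``cross'' exactly once, so a sweep from the leftside boundary to the rightside boundary passes through each tile exactly once and transforms $1\,2\cdots n$ into $w(1)\cdots w(n)$ by transposing, once each, exactly the inverted pairs of $w$. Hence the number of tiles equals $\#\{i<j:\ w(i)>w(j)\}=\ell(w)$, independently of any labeling.

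I would build the bijection by constructing tilings from the rightside boundary inward. A valid labeling in the sense of the statement is exactly a linear extension of the partial order in which $t'\prec t$ whenever a right edge of $t$ is shared with $t'$; its minimal tile (label $1$) therefore has both right edges on the rightside boundary of $X(w)$, and adding tiles in increasing label order keeps each new tile supported on the current right boundary. Adding the tile of depth $d$ fills a right-pointing convex corner of the current boundary word and transposes its entries in positions $d,d+1$, lowering the inversion count by one; reading the depths $i_1,i_2,\dots,i_{\ell(w)}$ in label order thus sorts $w(1)\cdots w(n)$ and gives $w\,s_{i_1}\cdots s_{i_{\ell(w)}}=e$, i.e.\ $w=s_{i_{\ell(w)}}\cdots s_{i_1}$, which is reduced because the length drops at every step. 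Conversely, given a reduced $s_{i_{\ell(w)}}\cdots s_{i_1}\in R(w)$, reducedness guarantees that reading it as the swap sequence $i_1,i_2,\dots$ always presents a genuine descent of the current boundary word, which is exactly the geometric condition that a rhombus may be laid against the boundary; laying them down in this order yields a tiling in $T(w)$ with a valid labeling. These two constructions are mutually inverse, giving the stated bijection between validly labeled tilings and $R(w)$.

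Finally, for the commutation-class claim I would compare different valid labelings of one fixed unlabeled tiling. Any two linear extensions of the right-support poset are joined by a chain of transpositions of adjacent incomparable tiles, each of which changes the associated word only by transposing two adjacent letters $s_{i_a},s_{i_{a+1}}$. The point is that such incomparable, label-consecutive tiles must have depths differing by at least two, so the transposition is a legitimate commutation $s_is_j=s_js_i$ with $|i-j|\ge2$; conversely, every commutation move reorders two independent tiles and leaves the unlabeled tiling unchanged. Together with the observation that the unlabeled tiling is recoverable as the common support of all its labelings, this shows that the words from one tiling form exactly one commutation class and that distinct tilings give distinct classes. I expect the main obstacle to be precisely this last geometric lemma: proving that label-consecutive tiles are incomparable if and only if their depths differ by at least two (equivalently, that neighboring depths force a support relation). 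This is the exact local translation between ``independent tiles'' and ``commuting generators'' and also underlies the length-additivity invoked throughout, and it requires care because edge-sharing alone does not pin down the depth difference.
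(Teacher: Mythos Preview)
The paper does not prove this theorem; it is quoted as \cite[Theorem~2.2]{elnitsky} and used as background, so there is no in-paper argument to compare against. Your proposal is essentially a correct reconstruction of Elnitsky's original proof: the border-sweep dictionary between shortest top-to-bottom paths and words, the count of tiles as inversions, the peeling construction from the rightside boundary, and the identification of valid labelings with linear extensions of the right-support poset are all the ingredients of that argument.

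The one place that deserves a line of justification is the biconditional you single out at the end. For two label-consecutive tiles in a fixed linear extension: if they are comparable then (being consecutive) they form a cover relation, hence share an edge, and matching a right edge of one with a left edge of the other forces their depths to differ by at most one; conversely, if they are incomparable then both are simultaneously exposed on the current right boundary, so the edge-pairs $\{d,d{+}1\}$ and $\{d',d'{+}1\}$ they occupy are disjoint and $|d-d'|\ge 2$. That is exactly the local lemma you need, and with it the commutation-class statement follows as you outline.
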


\begin{example}\label{ex:T(42153)}
There are two rhombic tilings of $X(42153)$. The tiling depicted in Figure~\ref{fig:T(42153)}(a) corresponds to the first commutation class in Example~\ref{ex:42153 reduced decompositions}, and the tiling depicted in Figure~\ref{fig:T(42153)}(b) corresponds to the second. The four decompositions in the class
$$\{s_3s_2s_1s_2s_4, s_3s_2s_1s_4s_2, s_3s_2s_4s_1s_2, s_3s_4s_2s_1s_2\}$$
correspond to the four labelings of the tiling in Figure~\ref{fig:T(42153)}(a), depicted, respectively, in Figure~\ref{fig:T(42153) labelings}.
\end{example}

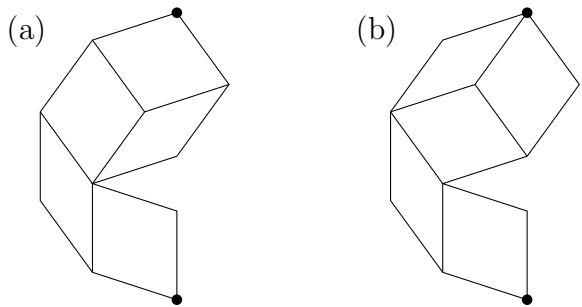
\begin{figure}[htbp]
\begin{tikzpicture}
\begin{scope}
\clip (.2,2) rectangle (-2,-2);
\node[draw=none,minimum size=4cm,regular polygon,regular polygon sides=10] (a) {};
\foreach \x in {(a.side 1),(a.side 6)} {\fill \x circle (2pt);}
\end{scope}
\foreach \x in {1,2,3,4,5,6} {\coordinate (corner \x) at (a.side \x);}
\foreach \y [evaluate=\y as \x using \y+1] in {1,2,3,4,5} {\coordinate (side \y) at ($(corner \x)-(corner \y)$);}
\foreach \y [evaluate=\y as \x using \y+1] in {1,2,3,4,5} {\coordinate (side -\y) at ($(corner \y)-(corner \x)$);}
\draw (corner 1) -- (corner 2) -- (corner 3) -- (corner 4) -- (corner 5) -- (corner 6);
\draw (corner 1) --++ (side 4) coordinate (corner 10) --++ (side 2) coordinate (corner 9) --++ (side 1) coordinate (corner 8) --++ (side 5) coordinate (corner 7) --++ (side 3);
\draw (corner 5) -- (corner 8) -- (corner 3);
\draw (corner 2) --++ (side 4) --++ (side -1);
\draw (corner 8) --++ (side -2);
\draw ($(corner 1) + (-2,-.25)$) node {(a)};
\end{tikzpicture}
\hspace{.5in}
\begin{tikzpicture}
\begin{scope}
\clip (.2,2) rectangle (-2,-2);
\node[draw=none,minimum size=4cm,regular polygon,regular polygon sides=10] (a) {};
\foreach \x in {(a.side 1),(a.side 6)} {\fill \x circle (2pt);}
\end{scope}
\foreach \x in {1,2,3,4,5,6} {\coordinate (corner \x) at (a.side \x);}
\foreach \y [evaluate=\y as \x using \y+1] in {1,2,3,4,5} {\coordinate (side \y) at ($(corner \x)-(corner \y)$);}
\foreach \y [evaluate=\y as \x using \y+1] in {1,2,3,4,5} {\coordinate (side -\y) at ($(corner \y)-(corner \x)$);}
\draw (corner 1) -- (corner 2) -- (corner 3) -- (corner 4) -- (corner 5) -- (corner 6);
\draw (corner 1) --++ (side 4) coordinate (corner 10) --++ (side 2) coordinate (corner 9) --++ (side 1) coordinate (corner 8) --++ (side 5) coordinate (corner 7) --++ (side 3);
\draw (corner 5) -- (corner 8) -- (corner 3);
\draw (corner 1) --++ (side 2) --++ (side 1);
\draw (corner 9) --++ (side -4);
\draw ($(corner 1) + (-2,-.25)$) node {(b)};
\end{tikzpicture}
\caption{The rhombic tilings $T(42153)$, discussed in Example~\ref{ex:T(42153)}.}
\label{fig:T(42153)}
\end{figure}

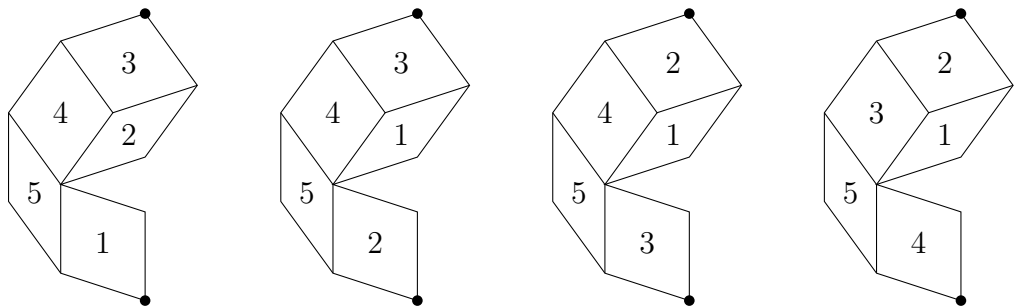
\begin{figure}[htbp]
\begin{tikzpicture}
\begin{scope}
\clip (.2,2) rectangle (-2,-2);
\node[draw=none,minimum size=4cm,regular polygon,regular polygon sides=10] (a) {};
\foreach \x in {(a.side 1),(a.side 6)} {\fill \x circle (2pt);}
\end{scope}
\foreach \x in {1,2,3,4,5,6} {\coordinate (corner \x) at (a.side \x);}
\foreach \y [evaluate=\y as \x using \y+1] in {1,2,3,4,5} {\coordinate (side \y) at ($(corner \x)-(corner \y)$);}
\foreach \y [evaluate=\y as \x using \y+1] in {1,2,3,4,5} {\coordinate (side -\y) at ($(corner \y)-(corner \x)$);}
\draw (corner 1) -- (corner 2) -- (corner 3) -- (corner 4) -- (corner 5) -- (corner 6);
\draw (corner 1) --++ (side 4) coordinate (corner 10) --++ (side 2) coordinate (corner 9) --++ (side 1) coordinate (corner 8) --++ (side 5) coordinate (corner 7) --++ (side 3);
\draw (corner 5) -- (corner 8) -- (corner 3);
\draw (corner 2) --++ (side 4) --++ (side -1);
\draw (corner 8) --++ (side -2) coordinate (corner A);
\draw ($(corner 1) + (-2,-.25)$);
\draw ($(corner 9)!0.5!(corner A)$) node {$2$};
\draw ($(corner 1)!0.5!(corner A)$) node {$3$};
\draw ($(corner 3)!0.5!(corner A)$) node {$4$};
\draw ($(corner 5)!0.5!(corner 7)$) node {$1$};
\draw ($(corner 4)!0.5!(corner 8)$) node {$5$};
\end{tikzpicture}
\hspace{.25in}
\begin{tikzpicture}
\begin{scope}
\clip (.2,2) rectangle (-2,-2);
\node[draw=none,minimum size=4cm,regular polygon,regular polygon sides=10] (a) {};
\foreach \x in {(a.side 1),(a.side 6)} {\fill \x circle (2pt);}
\end{scope}
\foreach \x in {1,2,3,4,5,6} {\coordinate (corner \x) at (a.side \x);}
\foreach \y [evaluate=\y as \x using \y+1] in {1,2,3,4,5} {\coordinate (side \y) at ($(corner \x)-(corner \y)$);}
\foreach \y [evaluate=\y as \x using \y+1] in {1,2,3,4,5} {\coordinate (side -\y) at ($(corner \y)-(corner \x)$);}
\draw (corner 1) -- (corner 2) -- (corner 3) -- (corner 4) -- (corner 5) -- (corner 6);
\draw (corner 1) --++ (side 4) coordinate (corner 10) --++ (side 2) coordinate (corner 9) --++ (side 1) coordinate (corner 8) --++ (side 5) coordinate (corner 7) --++ (side 3);
\draw (corner 5) -- (corner 8) -- (corner 3);
\draw (corner 2) --++ (side 4) --++ (side -1);
\draw (corner 8) --++ (side -2) coordinate (corner A);
\draw ($(corner 1) + (-2,-.25)$);
\draw ($(corner 9)!0.5!(corner A)$) node {$1$};
\draw ($(corner 1)!0.5!(corner A)$) node {$3$};
\draw ($(corner 3)!0.5!(corner A)$) node {$4$};
\draw ($(corner 5)!0.5!(corner 7)$) node {$2$};
\draw ($(corner 4)!0.5!(corner 8)$) node {$5$};
\end{tikzpicture}
\hspace{.25in}
\begin{tikzpicture}
\begin{scope}
\clip (.2,2) rectangle (-2,-2);
\node[draw=none,minimum size=4cm,regular polygon,regular polygon sides=10] (a) {};
\foreach \x in {(a.side 1),(a.side 6)} {\fill \x circle (2pt);}
\end{scope}
\foreach \x in {1,2,3,4,5,6} {\coordinate (corner \x) at (a.side \x);}
\foreach \y [evaluate=\y as \x using \y+1] in {1,2,3,4,5} {\coordinate (side \y) at ($(corner \x)-(corner \y)$);}
\foreach \y [evaluate=\y as \x using \y+1] in {1,2,3,4,5} {\coordinate (side -\y) at ($(corner \y)-(corner \x)$);}
\draw (corner 1) -- (corner 2) -- (corner 3) -- (corner 4) -- (corner 5) -- (corner 6);
\draw (corner 1) --++ (side 4) coordinate (corner 10) --++ (side 2) coordinate (corner 9) --++ (side 1) coordinate (corner 8) --++ (side 5) coordinate (corner 7) --++ (side 3);
\draw (corner 5) -- (corner 8) -- (corner 3);
\draw (corner 2) --++ (side 4) --++ (side -1);
\draw (corner 8) --++ (side -2) coordinate (corner A);
\draw ($(corner 1) + (-2,-.25)$);
\draw ($(corner 9)!0.5!(corner A)$) node {$1$};
\draw ($(corner 1)!0.5!(corner A)$) node {$2$};
\draw ($(corner 3)!0.5!(corner A)$) node {$4$};
\draw ($(corner 5)!0.5!(corner 7)$) node {$3$};
\draw ($(corner 4)!0.5!(corner 8)$) node {$5$};
\end{tikzpicture}
\hspace{.25in}
\begin{tikzpicture}
\begin{scope}
\clip (.2,2) rectangle (-2,-2);
\node[draw=none,minimum size=4cm,regular polygon,regular polygon sides=10] (a) {};
\foreach \x in {(a.side 1),(a.side 6)} {\fill \x circle (2pt);}
\end{scope}
\foreach \x in {1,2,3,4,5,6} {\coordinate (corner \x) at (a.side \x);}
\foreach \y [evaluate=\y as \x using \y+1] in {1,2,3,4,5} {\coordinate (side \y) at ($(corner \x)-(corner \y)$);}
\foreach \y [evaluate=\y as \x using \y+1] in {1,2,3,4,5} {\coordinate (side -\y) at ($(corner \y)-(corner \x)$);}
\draw (corner 1) -- (corner 2) -- (corner 3) -- (corner 4) -- (corner 5) -- (corner 6);
\draw (corner 1) --++ (side 4) coordinate (corner 10) --++ (side 2) coordinate (corner 9) --++ (side 1) coordinate (corner 8) --++ (side 5) coordinate (corner 7) --++ (side 3);
\clip (corner 1) -- (corner 2) -- (corner 3) -- (corner 4) -- (corner 5) -- (corner 6) -- (corner 7) -- (corner 8) -- (corner 9) -- (corner 10) -- (corner 1);
\draw (corner 5) -- (corner 8) -- (corner 3);
\draw (corner 2) --++ (side 4) --++ (side -1);
\draw (corner 8) --++ (side -2) coordinate (corner A);
\draw ($(corner 1) + (-2,-.25)$);
\draw ($(corner 9)!0.5!(corner A)$) node {$1$};
\draw ($(corner 1)!0.5!(corner A)$) node {$2$};
\draw ($(corner 3)!0.5!(corner A)$) node {$3$};
\draw ($(corner 5)!0.5!(corner 7)$) node {$4$};
\draw ($(corner 4)!0.5!(corner 8)$) node {$5$};
\end{tikzpicture}
\caption{The four permitted labelings of the tiling in Figure~\ref{fig:T(42153)}(a), corresponding to the four elements in one of the commutation classes of $R(42153)$.}
\label{fig:T(42153) labelings}
\end{figure}

The definition of Elnitsky's polygon and its rhombic tilings has additional implications for what sorts of tiles may appear in elements of $T(w)$.

\begin{definition}
If the edges of a tile $t$ in $T \in T(w)$ are parallel to the sides labeled $a$ and $b$ in $X(w)$, then the \emph{edge labels} of $t$ are $\{a,b\}$.
\end{definition}
 
In Elnitsky's bijection, each tile corresponds to an inversion in $w$ \cite{elnitsky}. Moreover, if $i<j$ and $w(i) > w(j)$, then each $T \in T(w)$ contains a tile whose edge labels are $\{w(i),w(j)\}$.

\begin{corollary}\label{cor:rhombi can't have same labels}
For any $w$ and $T \in T(w)$, no two tiles in $T$ have the same edge labels.
\end{corollary}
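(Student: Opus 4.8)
The plan is to combine Elnitsky's enumeration of tiles with a short counting argument, the key input being the injectivity of the map that sends an inversion to its unordered pair of values.

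First I would assemble the two numerical facts already at hand. By Theorem~\ref{thm:elnitsky}, the tiling $T$ contains exactly $\ell(w)$ tiles; and by the definition of length, $\ell(w)$ equals the number of inversions, $\#\{(i,j) : i<j,\ w(i)>w(j)\}$. So the number of tiles and the number of inversions coincide. Next I would observe that the assignment sending an inversion $(i,j)$ to the edge-label set $\{w(i),w(j)\}$ is injective: since $w$ is a bijection on $[n]$, each value occurs in exactly one position, so from a pair $\{a,b\}$ one recovers the positions $w^{-1}(a)$ and $w^{-1}(b)$, and hence the inversion itself. Thus distinct inversions give rise to distinct edge-label sets.

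Then I would invoke the remark preceding the statement: for each inversion $i<j$, $w(i)>w(j)$, the tiling $T$ contains a tile whose edge labels are $\{w(i),w(j)\}$. Together with the injectivity just noted, this shows that the tiles of $T$ realize at least $\ell(w)$ distinct edge-label sets. Since $T$ has exactly $\ell(w)$ tiles, a pigeonhole argument forces each of these edge-label sets to be realized by exactly one tile; in particular, no two tiles of $T$ can carry the same edge labels, which is the claim.

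The one point requiring care — and the main, if modest, obstacle — is the logical upgrade from existence to uniqueness. The preceding remark guarantees only that \emph{every} inversion is realized by \emph{some} tile, not that this realization is unique, which is precisely what the corollary asserts. The counting identity (number of tiles equals number of inversions) combined with the injectivity of the value-pair map is exactly what closes this gap, so I would take care to present the pigeonhole step explicitly rather than silently treating the tile–inversion correspondence as a bijection.
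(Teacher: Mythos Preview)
Your proof is correct and follows essentially the same approach as the paper: count tiles via $\ell(w)$, match this against the number of inversions, and use that each inversion contributes a distinct edge-label pair to conclude by pigeonhole. You are simply more explicit than the paper about the injectivity of the map $(i,j)\mapsto\{w(i),w(j)\}$, which the paper's proof uses implicitly when asserting that ``each such pair of edge labels appears exactly once.''
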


\begin{proof}
For each $\{w(i)>w(j)\}$ with $i<j$, there is a tile in $T$ with edge labels $\{w(i),w(j)\}$. The number of tiles in $T$ is $\ell(w)$, which is equal to the number of such $(i,j)$. Thus each such pair of edge labels appears exactly once and no other pairs of edge labels appear at all.
\end{proof}

Consider a simple closed curve and the region that it encloses. If that region is tiled (partitioned) by some collection of shapes, then we can identify the tiles that intersect the boundary nontrivially. Suppose that the curve, region, and tiling are \emph{discrete} in the sense that all shapes (region and tiles) are polygons, and pairs of tile edges whose intersection contains more than a single point actually intersect in their entirety. Then there is an intuitive tiling-based distance metric in this setting: each tile edge has ``discrete length'' one, and the ``discrete length'' of a path is the number of tile edges that the path contains. Similarly, there is an intuitive notion of ``discrete area,'' as defined below.

\begin{definition}\label{defn:tiling-based area}
Let $R$ be a region with polygonal boundary, and $T$ a discrete tiling of $R$, in the sense defined above. The \emph{area} of this tiling of $R$ is the number of tiles in $T$.
\end{definition}

The tilings we will discuss have the property that every tiling of a region $R$ has the same number of tiles, and so the area depends on the region $R$ alone. Because the regions and tiles that we study are all polygons, the interesting question about tiling behavior along the boundary of $R$ is: how many tiles share multiple edges with the boundary of $R$? That is, conspicuous tile/boundary intersection occurs when there is prolonged (multi-edge) overlap.

\begin{definition}\label{defn:perimeter tile generally}
Let $R$ be a region with polygonal boundary, and $T$ a discrete tiling of $R$. A tile in $T$ that shares a path of discrete length at least two with the boundary of $R$ is a \emph{strong perimeter tile}, or simply a \emph{perimeter tile}. A non-strong perimeter tile in $T$ that shares a path of positive discrete length with the boundary of $R$ is a \emph{weak perimeter tile}.
\end{definition}

For a permutation $w$ and a rhombic tiling $T \in T(w)$, let
$$\perim(T)$$
be the number of (strong) perimeter tiles in $T$. The total number of weak and strong perimeter tiles in a tiling $T \in T(w)$ (that is, the number of tiles sharing paths of positive length with the boundary of $X(w)$) is less than or equal to
$$2n - \perim(T),$$
depending on whether any weak or strong perimeter tiles in $T$ share more than the minimally required number of edges with the boundary of $X(w)$. There is certainly a case to be made for studying this quantity. However, as it depends so closely on $\perim$, and because of the relevance of (strong) perimeter tiles discussed in the rest of this section, we focus our attention on the statistic $\perim$ in this paper.

\begin{definition}\label{defn:perimeter tiles}
Fix a permutation $w$ and consider a rhombic tiling of Elnitsky's polygon $X(w)$. If, in such a tiling, a perimeter tile $t$ includes $\ldots$
\begin{itemize}
\item two edges from the leftside boundary of $X(w)$, then $t$ is a \emph{left-perimeter tile};
\item two edges from the rightside boundary of $X(w)$, then $t$ is a \emph{right-perimeter tile};
\item the two boundary edges to the left and right of the top vertex of $X(w)$, then $t$ is a \emph{top-perimeter tile};
\item the two boundary edges to the left and right of the bottom vertex of $X(w)$, then $t$ is a \emph{bottom-perimeter tile}.
\end{itemize}
The \emph{type} of a perimeter tile is its classification as left-, right-, top-, and/or bottom-. \emph{Side-perimeter} will refer to tiles whose types are left- or right-.
\end{definition}

\begin{example}
Consider the tiling of $X(42153)$ shown in Figure~\ref{fig:T(42153)}(a), with tiles labeled as in leftmost picture in Figure~\ref{fig:T(42153) labelings}. This tiling has one left-perimeter tile (tile $5$), two right-perimeter tiles (tiles $1$ and $2$), one top-perimeter tile (tile $3$), and one bottom-perimeter tile (tile $1$). The bottom-perimeter tile is also a right-perimeter tile.
\end{example}

The significance of perimeter tiles follows directly from Elnitsky's bijection (Theorem~\ref{thm:elnitsky}), because one can say precisely which labeled tiling corresponds to each reduced decomposition.

\begin{corollary}\label{cor:meaning of perimeter tiles}
Fix $w\in \mathfrak{S}_n$. Some $T \in T(w)$ has a $\ldots$
\begin{enumerate}
\item[(a)] left-perimeter tile of depth $d$ if and only if $\ell(s_d w) < \ell(w)$; equivalently, there exists a reduced decomposition of $w$ in which $s_d$ is the leftmost letter in the product.
\item[(b)] right-perimeter tile of depth $d$ if and only if $\ell(w s_d) < \ell(w)$; equivalently, there exists a reduced decompositions of $w$ in which $s_d$ is the rightmost letter in the product.
\item[(c)] top-perimeter tile if and only if there exists a commutation class $C \in C(w)$ in which there is exactly one $s_1$ in each reduced decomposition in $C$.
\item[(d)] bottom-perimeter tile if and only if there exists a commutation class $C \in C(w)$ in which there is exactly one $s_{n-1}$ in each reduced decomposition in $C$.
\end{enumerate}
\end{corollary}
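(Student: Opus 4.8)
The plan is to read everything off Elnitsky's labeling bijection (Theorem~\ref{thm:elnitsky}), using two structural facts. First, for a fixed unlabeled tiling $T \in T(w)$, the valid labelings are exactly the linear extensions of the partial order $\prec$ on the tiles of $T$ defined by $t' \prec t$ whenever $t'$ meets one of the two \emph{right} edges of $t$. Under any such labeling, the tile carrying label $a$ contributes the factor $s_{i_a}$ with $i_a$ its depth; in particular the rightmost letter $s_{i_1}$ comes from the $\prec$-minimal tile (labeled $1$) and the leftmost letter $s_{i_{\ell(w)}}$ comes from the $\prec$-maximal tile (labeled $\ell(w)$). Second, since commutation moves permute but never change the letters of a reduced word, the number of occurrences of a fixed generator $s_d$ is constant across a commutation class, and by the bijection it equals the number of depth-$d$ tiles in the associated tiling.

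For (b), I would note that a tile is $\prec$-minimal exactly when neither of its right edges is shared with another tile, i.e.\ when both lie on the rightside boundary, which is precisely the condition (Definition~\ref{defn:perimeter tiles}) that it be a right-perimeter tile. Hence some $T$ has a right-perimeter tile of depth $d$ iff some tiling admits a valid labeling whose tile $1$ has depth $d$ iff $w$ has a reduced word with rightmost letter $s_d$; and $w = v s_d$ reduced is the standard characterization of $\ell(w s_d) < \ell(w)$. Part (a) is the mirror image: a tile is $\prec$-maximal exactly when neither of its \emph{left} edges is shared, i.e.\ when it is a left-perimeter tile, and such a tile may be labeled $\ell(w)$, producing the leftmost letter $s_d$, which is equivalent to $\ell(s_d w) < \ell(w)$.

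For (c) and (d), the crux is a corner analysis. A tile meets the top vertex of $X(w)$ if and only if it has depth $1$: the top vertex is the unique highest point of $X(w)$, so any tile containing it has it as the tile's own top vertex, placing that top vertex at distance $0$ along every shortest descending path; conversely a depth-$1$ tile has its top vertex there. Thus the depth-$1$ tiles are exactly the tiles partitioning the interior angle at the top vertex, and by the second fact their number equals the number of $s_1$'s common to every reduced word in the class. A top-perimeter tile, by definition, uses the two boundary edges adjacent to the top vertex (labeled $1$ and $w(1)$), so it fills that entire corner angle by itself; therefore it exists exactly when there is a single depth-$1$ tile, i.e.\ exactly when every reduced word in the class has exactly one $s_1$. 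Part (d) is identical after replacing the top vertex by the bottom vertex: a tile meets the bottom vertex iff its bottom vertex sits at level $n$, which forces depth $n-1$, so the number of such tiles equals the number of $s_{n-1}$'s, and a bottom-perimeter tile (using the boundary edges labeled $n$ and $w(n)$) exists iff that count is $1$.

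I expect the main obstacle to be exactly this corner analysis for (c) and (d): one must argue carefully that every tile incident to an extreme vertex is forced into depth $1$ (resp.\ depth $n-1$), that these tiles tile the corner angle, and hence that having a genuine top- (resp.\ bottom-)perimeter tile is equivalent to the corner being occupied by a single rhombus. Once the depth-versus-incidence dictionary is pinned down, the descent statements (a) and (b) are routine translations through the labeling poset $\prec$.
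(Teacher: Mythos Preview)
Your proposal is correct and follows the same route as the paper, which states only that the corollary ``follows directly from Elnitsky's bijection (Theorem~\ref{thm:elnitsky}), because one can say precisely which labeled tiling corresponds to each reduced decomposition'' and gives no further argument. Your write-up simply unpacks that sentence: the poset $\prec$ description of valid labelings for (a) and (b), and the depth-versus-corner-incidence count for (c) and (d), are exactly the details one fills in when reading the result off the bijection.
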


\begin{example}
In the tilings of $X(42153)$ (see Figure~\ref{fig:T(42153)}), there are left-perimeter tiles of depths $1$ and $3$, right-perimeter tiles of depths $1$, $2$, and $4$, top-perimeter tiles, and bottom-perimeter tiles. The left-perimeter tiles indicate that the only leftmost letters that appear in elements of $R(42153)$ are $s_1$ and $s_3$. The right-perimeter tiles indicate that the only rightmost letters that appear are $s_1$, $s_2$, and $s_4$. The top-perimeter tile indicates that there is a commutation class in which all reduced decompositions contain exactly one $s_1$, and the bottom-perimeter tiles indicate that there are two commutation classes in which all reduced decompositions contain exactly one $s_4$. Moreover, because one tiling of $X(42153)$ has both a top- and a bottom-perimeter tile, there is a commutation class in which each element contains exactly one $s_1$ and exactly one $s_4$. These conclusions agree with Example~\ref{ex:42153 reduced decompositions}.
\end{example}

It is clear that the perimeter tiles in elements of $T(w)$ relate to the combinatorics of reduced decompositions of permutations. The purpose of this paper is to understand when, how, and how often these perimeter tiles occur. We start with an easy observation about an upper bound for the number of perimeter tiles that can occur in any tiling. As we will see in Theorem~\ref{thm:upper bound for total perimeter tiles}, this can sometimes be tightened.

\begin{proposition}\label{prop:at most n perimeter rhombi}
For any $w \in \mathfrak{S}_n$, each tiling in $T(w)$ has at most $n$ perimeter tiles.
\end{proposition}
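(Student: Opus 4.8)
The plan is to argue by a simple double-counting of the boundary edges of $X(w)$. First I would record the basic structural fact that, by Definition~\ref{defn:elnitsky's polygon}, the boundary of $X(w)$ is a closed polygon with exactly $2n$ edges. This is the total resource that the perimeter tiles must compete for.

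The key observation is that each edge on the boundary of $X(w)$ is a full edge of exactly one tile of the tiling $T$. Indeed, in any rhombic tiling every tile edge is congruent and parallel to a side of $X(w)$, and the discreteness hypothesis guarantees that whenever two edges overlap in more than a single point they coincide entirely. Hence a boundary edge that a tile meets in more than a point is an entire edge of that one interior tile, and no boundary edge can simultaneously be an edge of two distinct tiles, since the region lies on only one side of a boundary edge. Consequently the sets of boundary edges belonging to distinct tiles are pairwise disjoint.

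Now by Definition~\ref{defn:perimeter tile generally}, every (strong) perimeter tile shares a path of discrete length at least two with the boundary; that is, it contributes at least two of the $2n$ boundary edges as full tile edges. Since these contributions are disjoint across the $\perim(T)$ perimeter tiles, summing gives
$$2\,\perim(T) \le 2n,$$
and therefore $\perim(T) \le n$, as claimed.

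I expect the only point requiring care to be the disjointness claim in the second paragraph: one must be certain both that the path a perimeter tile shares with the boundary genuinely consists of entire tile edges (so that it consumes actual boundary edges rather than partial overlaps) and that no boundary edge gets double-counted across two tiles. Both facts follow from the congruence of all edges together with the observation that a region-boundary edge borders the interior on a single side, so exactly one tile lies against it. Once this is established, the counting bound is immediate and needs no reference to the labeling or depth structure of Theorem~\ref{thm:elnitsky}.
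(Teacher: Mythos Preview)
Your proof is correct and follows essentially the same approach as the paper's own argument: the boundary of $X(w)$ has $2n$ edges, each perimeter tile uses at least two of them, and no boundary edge can belong to more than one tile. The paper states this in two sentences, while you are simply more explicit about the disjointness justification.
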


\begin{proof}
The polygon $X(w)$ has $2n$ edges. Each perimeter tile accounts for at least two of these edges, and no edge of $X(w)$ can belong to more than one tile.
\end{proof}

For an initial lower bound on the number of perimeter tiles that can appear in a tiling, we draw an immediate consequence from Elnitsky's theorem and Corollary~\ref{cor:meaning of perimeter tiles}.

\begin{corollary}\label{cor:always have left and right perimeter tiles}
For any $w$ and $T \in T(w)$, there is at least one left-perimeter tile and at least one right-perimeter tile in $T$.
\end{corollary}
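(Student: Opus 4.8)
The plan is to fix an arbitrary tiling $T \in T(w)$ and to extract, directly from any valid Elnitsky labeling of $T$, one right-perimeter tile and one left-perimeter tile. Requirement~\ref{req:contiguity} forces $w$ to be non-identity, so $\ell(w) \geq 1$; hence $T$ has at least one tile and, by Theorem~\ref{thm:elnitsky}, admits a valid labeling of its tiles by $1, 2, \ldots, \ell(w)$. The whole argument then rests on inspecting the tiles of minimal and maximal label.

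First I would produce the right-perimeter tile from the minimal label. Consider the tile $t_1$ labeled $1$. By the labeling rule in Theorem~\ref{thm:elnitsky}, the rightmost edges of $t_1$ are shared with the rightside boundary of $X(w)$ and/or with tiles of smaller label. Since there is no tile of smaller label, both rightmost edges of $t_1$ must lie on the rightside boundary. These two edges meet at the rightmost vertex of $t_1$ and so form a boundary path of discrete length two; thus $t_1$ is a (strong) perimeter tile including two rightside edges, i.e.\ a right-perimeter tile.

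Next I would produce the left-perimeter tile from the maximal label by a contradiction argument. Let $t_{\max}$ be the tile labeled $\ell(w)$. The key geometric fact to record is that when two rhombi share an edge $e$, the edge $e$ is a leftmost edge of one tile and a rightmost edge of the other: since each rhombus lies entirely on one side of $e$, the rhombus to the right of $e$ meets it at its leftmost vertex while the rhombus to the left meets it at its rightmost vertex. Now suppose a leftmost edge of $t_{\max}$ were shared with another tile $t'$. Then that edge would be a rightmost edge of $t'$, so by the (equivalent form of the) labeling rule applied to $t'$, any tile sharing a rightmost edge of $t'$ must have label smaller than that of $t'$; but that tile is $t_{\max}$, forcing $\ell(w) < \operatorname{label}(t')$ and contradicting maximality. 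Hence neither leftmost edge of $t_{\max}$ is shared with a tile, so both lie on the leftside boundary and $t_{\max}$ is a left-perimeter tile.

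Since $T$ was arbitrary, every tiling in $T(w)$ contains both a right- and a left-perimeter tile (these coincide only when $\ell(w)=1$, which is harmless for the statement). I expect the only delicate point to be the geometric lemma on shared edges --- namely, confirming that a common edge is ``left'' for one rhombus and ``right'' for the other --- since this is where the orientation of rhombus edges parallel to the sides of $X(w)$ must be used with care; everything else follows formally from the labeling rule of Theorem~\ref{thm:elnitsky}.
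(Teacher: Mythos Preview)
Your proof is correct and is essentially the same argument the paper intends: the paper simply invokes Theorem~\ref{thm:elnitsky} and Corollary~\ref{cor:meaning of perimeter tiles}, which amounts to observing that in any valid labeling the tile labeled $1$ must have both rightmost edges on the rightside boundary and the tile labeled $\ell(w)$ must have both leftmost edges on the leftside boundary. You have unpacked this explicitly, including the small geometric observation that a shared edge is a leftmost edge of one rhombus and a rightmost edge of the other, which is exactly what justifies the ``only delicate point'' you flagged.
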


Corollary~\ref{cor:always have left and right perimeter tiles} does not require that those left- and right-perimeter tiles be distinct. However, it is clear from Theorem~\ref{thm:elnitsky} and Requirement~\ref{req:contiguity}(b) that the only situation in which distinct left- and right-perimeter tiles cannot be found is when $w = 21 \in \mathfrak{S}_2$.

\section{Perimeter tiles for the longest element}\label{sec:long element} 

We devote this section to understanding perimeter tiles that appear in $X(\longelt{n})$, where
$$\longelt{n} = n(n-1)\cdots 321$$
is the longest element in $\mathfrak{S}_n$. When $n$ is clear from context, we may write $w_0 := \longelt{n}$. Our primary focus is on extremal data, with a brief discussion of averages at the end of the section. Recall Requirement~\ref{req:contiguity}(a), that $n > 1$. The element $\longelt{n}$ and its properties are of particular Coxeter-theoretic interest, and have been studied often (see, for example, \cite{reiner, tenner comm} and the works cited previously).

\begin{definition}\label{defn:perimeter counting functions}
For $w$ and $T \in T(w)$, recall that $\perim(T)$ is the number of (strong) perimeter tiles in $T$. When tile type is relevant, the functions $\leftperim$, $\rightperim$, $\topperim$, and $\bottomperim$ will be used in the obvious way.
We say that the value of $\perim(T)$ measures the \emph{strong perimeter} of the tiling $T$. If all $T \in T(w)$ have the same strong perimeter, then this value is the \emph{strong perimeter} of $w$.
\end{definition}

For any $w \in \mathfrak{S}_n$ and $T \in T(w)$, $\leftperim(T)$ and $\rightperim(T)$ are integers between $0$ and $n/2$, while $\topperim(T)$ and $\bottomperim(T)$ are indicator variables taking values $0$ or $1$.

Our intention is to understand the statistic $\perim(T)$, particularly when $w = \longelt{n}$. Because $X(\longelt{n})$ is a convex centrally symmetric $2n$-gon, we can make use of its symmetries in our arguments, and we start this section with an elementary, but important, observation about regular polygons. It simplifies the argument, and does not change the result, to assume, for the moment, that $X(w_0)$ is equiangular. 

\begin{lemma}\label{lem:can rotate}
Consider a regular $2n$-gon $X$ and the set $T(X)$ of rhombic tilings of $X$, in which the sides of each rhombus are congruent and parallel to sides of $X$. For any $T \in T(X)$, let $T'$ be the result of rotating $T$ about the center of $X$ by an integer multiple of $(2\pi)/(2n)$ radians. Then $T' \in T(X)$.
\end{lemma}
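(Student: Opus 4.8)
The plan is to observe that rotation by $2\pi/(2n)$ is one of the rotational symmetries of the regular $2n$-gon $X$, and then to verify that applying such a symmetry to a rhombic tiling preserves every defining property of a rhombic tiling. The argument is purely geometric, so the permutation-theoretic apparatus (top/bottom vertices, Elnitsky's bijection, and so on) plays no role here.

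First I would let $\rho$ denote the rotation of the plane about the center of $X$ through the angle $2\pi/(2n)$. Since $X$ is regular, its dihedral symmetry group contains the cyclic group generated by $\rho$, so $\rho(X) = X$ as a set, and hence $\rho^k(X) = X$ for every integer $k$. Because $\rho^k$ is an isometry carrying $X$ onto itself, it carries a tiling $T$ of $X$ onto a collection $T' = \rho^k(T)$ of shapes that again cover $X$ with pairwise disjoint interiors; thus $T'$ is a tiling of $X$. It then remains only to check that $T'$ is \emph{rhombic} in the sense of the definition, i.e.\ that every tile of $T'$ is a rhombus whose edges are congruent to, and parallel to, edges of $X$.

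The key point---and the only step requiring any real content---is the behavior of edge directions under $\rho$. The $2n$ edges of a regular $2n$-gon fall into $n$ parallel classes, and the corresponding $n$ undirected directions are spaced at consecutive multiples of $2\pi/(2n)$. Rotating by $\rho$ sends each such direction to the next, permuting the set of $n$ edge-directions of $X$ among themselves (the last direction wraps around to the first because opposite edges of $X$ are parallel). Consequently, if a tile edge of $T$ is parallel to some edge of $X$, then its image under $\rho^k$ is parallel to another edge of $X$; and since $\rho^k$ is an isometry, congruence of edge lengths is preserved as well. Each rhombus therefore maps to a rhombus of the same shape satisfying the required parallelism, so every tile of $T'$ meets the conditions and $T' \in T(X)$.

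Finally I would note that the integer-multiple case follows immediately, either by applying the single-step argument $k$ times or, more directly, by observing that $\rho^k$ is itself a symmetry of $X$ for every $k$, so the reasoning above applies verbatim. I do not anticipate a genuine obstacle; the one place to be careful is the claim that $\rho$ permutes the edge-directions of $X$, which is exactly the assertion that $\rho$ is a symmetry of the regular polygon and should be stated explicitly rather than left implicit.
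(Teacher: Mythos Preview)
Your argument is correct and complete. The paper itself states this lemma without proof, calling it ``an elementary, but important, observation about regular polygons,'' so your write-up is actually more detailed than what appears there; the key points you isolate---that $\rho$ is a symmetry of $X$ and that it permutes the $n$ edge-directions---are exactly what the paper is taking for granted.
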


Although Elnitsky's polygon $X(w_0)$ need not be equiangular, the result of Lemma~\ref{lem:can rotate} applies: rhombic tilings of $X(w_0)$ can be rotated to produce other (not necessarily distinct) rhombic tilings of $X(w_0)$. An example of this is depicted in Figure~\ref{fig:can rotate tilings}.

\begin{figure}[htbp]
\begin{tikzpicture}
\node[draw=none,minimum size=4cm,regular polygon,regular polygon sides=12] (a) {};
\foreach \x in {(a.side 1),(a.side 7)} {\fill \x circle (2pt);}
\foreach \x in {1,...,12} {\coordinate (corner \x) at (a.side \x);}
\foreach \y [evaluate=\y as \x using \y+1] in {1,...,6} {\coordinate (side \y) at ($(corner \x)-(corner \y)$);}
\foreach \y [evaluate=\y as \x using \y+1] in {1,...,6} {\coordinate (side -\y) at ($(corner \y)-(corner \x)$);}
\foreach \y [evaluate=\y as \x using \y+1] in {1,...,11} {\draw (corner \y) -- (corner \x);}
\draw (corner 12) -- (corner 1);
\draw (corner 2) --++(side 3) coordinate (a) --++(side 2);
\draw (corner 5) --++(side 6) coordinate (d) --++(side 5);
\draw (corner 5) --++(side -2) --++(side 6) --++(side 2);
\draw (a) --++(side 4);
\draw (corner 2) --++(side 4) --++(side 3);
\draw (corner 1) --++(side 4) coordinate (b) --++(side 1);
\draw (b) --++(side 3) --++(side 1);
\draw (d) --++(side -1) --++(side -2) coordinate (e) --++(side 1);
\draw (e) --++(side -6);
\draw (corner 8) --++(side -5);
\draw (corner 9) --++(side -5);
\draw (corner 12) --++(side 4) coordinate (c)--++(side 5);
\draw (c) --++(side -6);
\draw (c) --++(side 3);
\end{tikzpicture}
\hspace{.5in}
\begin{tikzpicture}
\node[draw=none,minimum size=4cm,regular polygon,regular polygon sides=12] (a) {};
\foreach \x in {1,...,12} {\coordinate (fakecorner \x) at (a.side \x);}
\foreach \y [evaluate=\y as \x using \y+1] in {1,...,11} {\coordinate (corner \y) at (fakecorner \x);}
\coordinate (corner 12) at (fakecorner 1);
\fill (corner 12) circle (2pt);
\fill (corner 6) circle (2pt);
\foreach \y [evaluate=\y as \x using \y+1] in {1,...,6} {\coordinate (side \y) at ($(corner \x)-(corner \y)$);}
\foreach \y [evaluate=\y as \x using \y+1] in {1,...,6} {\coordinate (side -\y) at ($(corner \y)-(corner \x)$);}
\foreach \y [evaluate=\y as \x using \y+1] in {1,...,11} {\draw (corner \y) -- (corner \x);}
\draw (corner 12) -- (corner 1);
\draw (corner 12) -- (corner 1);
\draw (corner 2) --++(side 3) coordinate (a) --++(side 2);
\draw (corner 5) --++(side 6) coordinate (d) --++(side 5);
\draw (corner 5) --++(side -2) --++(side 6) --++(side 2);
\draw (a) --++(side 4);
\draw (corner 2) --++(side 4) --++(side 3);
\draw (corner 1) --++(side 4) coordinate (b) --++(side 1);
\draw (b) --++(side 3) --++(side 1);
\draw (d) --++(side -1) --++(side -2) coordinate (e) --++(side 1);
\draw (e) --++(side -6);
\draw (corner 8) --++(side -5);
\draw (corner 9) --++(side -5);
\draw (corner 12) --++(side 4) coordinate (c)--++(side 5);
\draw (c) --++(side -6);
\draw (c) --++(side 3);
\end{tikzpicture}
\caption{Two rhombic tilings of $X(\protect\longelt{6})$, which differ by a rotation.}
\label{fig:can rotate tilings}
\end{figure}
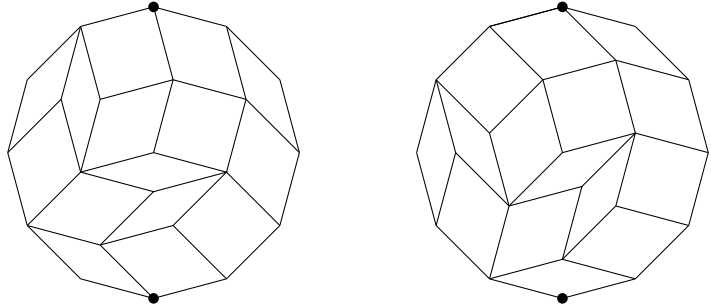

\begin{proposition}\label{prop:symmetry of x(w_0)}
Over all rhombic tilings of $X(\longelt{n})$, the following quantities are equal:
\begin{itemize}
\item the total number of top-perimeter tiles that appear,
\item the total number of bottom-perimeter tiles that appear,
\item the total number of left-perimeter tiles of depth $d$ that appear, for $d \in [n-1]$, and
\item the total number of right-perimeter tiles of depth $d$ that appear, for $d \in [n-1]$.
\end{itemize}
\end{proposition}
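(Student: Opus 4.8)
The plan is to exploit the rotational symmetry of the regular $2n$-gon, using Lemma~\ref{lem:can rotate} to produce bijections among tilings that cyclically permute the four families of perimeter tiles. For the argument it is cleanest to take $X(w_0)$ to be the regular $2n$-gon, as the surrounding remark permits, and to label its vertices $v_0, v_1, \ldots, v_{2n-1}$ counterclockwise with $v_0$ the top vertex and $v_n$ the bottom vertex. I would first reinterpret each type of perimeter tile as a \emph{corner tile}: for a boundary vertex $v_j$, say that a tiling has a corner tile at $v_j$ if some rhombus contains both boundary edges of $X(w_0)$ meeting at $v_j$. Checking the depth convention, a left-perimeter tile of depth $d$ is exactly a corner tile at $v_d$ (its topmost vertex is $v_{d-1}$, which lies $d-1$ edges from the top, so its depth is $d$); a right-perimeter tile of depth $d$ is exactly a corner tile at $v_{2n-d}$; a top-perimeter tile is a corner tile at $v_0$; and a bottom-perimeter tile is a corner tile at $v_n$. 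Thus the four bulleted quantities are indexed by boundary vertices, with top, bottom, and the depth-$d$ left and right tiles occupying $v_0$, $v_n$, $v_d$, and $v_{2n-d}$ respectively.

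Next I would reduce each ``total over all tilings'' to a count of tilings. Because tiles do not overlap, at most one rhombus of a given tiling can contain the two boundary edges meeting at a fixed $v_j$, so every $T \in T(w_0)$ has either zero or one corner tile at $v_j$. Writing $N_j$ for the number of tilings in $T(w_0)$ that possess a corner tile at $v_j$, the total number of corner tiles at $v_j$ summed over all of $T(w_0)$ is exactly $N_j$. In particular, the four quantities in the statement are $N_0$, $N_n$, $N_d$, and $N_{2n-d}$.

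The key step is then to show $N_0 = N_1 = \cdots = N_{2n-1}$. By Lemma~\ref{lem:can rotate}, rotating a tiling by $(2\pi)/(2n)$ radians carries $T(w_0)$ bijectively to itself; since this rotation sends $v_j$ to $v_{j+1}$ and edges to edges, it sends a corner tile at $v_j$ to a corner tile at $v_{j+1}$, and hence restricts to a bijection between the tilings counted by $N_j$ and those counted by $N_{j+1}$. Therefore $N_j = N_{j+1}$ for every $j$, and chaining these equalities around the polygon gives that all $N_j$ coincide. In particular $N_0 = N_n = N_d = N_{2n-d}$ for every $d \in [n-1]$, which is precisely the asserted equality of the four quantities (indeed, all $2n$ vertex counts are equal).

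I expect the only real care to lie in the bookkeeping of the correspondence between perimeter type-and-depth and the vertex index $v_j$ — in particular, verifying that the depth convention places depth-$d$ left tiles at $v_d$ and depth-$d$ right tiles at $v_{2n-d}$, so that the single-step rotation matches the claimed families. The substantive engine, that a single-step rotation is a symmetry of $T(w_0)$, is already supplied by Lemma~\ref{lem:can rotate}; the transitivity of single-step rotation on the $2n$ boundary vertices is exactly what upgrades the evident top/bottom and left/right symmetries into the full equality of all four quantities.
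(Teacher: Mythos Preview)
Your proposal is correct and follows essentially the same approach as the paper: both arguments invoke Lemma~\ref{lem:can rotate} to conclude that rotation acts transitively on the $2n$ boundary ``corners,'' so no perimeter location is distinguished. The paper's proof is a one-sentence sketch of this idea, while you have carefully spelled out the vertex-indexing, the $0$/$1$ observation that converts ``total number of corner tiles at $v_j$'' into ``number of tilings with a corner tile at $v_j$,'' and the explicit bijection $N_j \leftrightarrow N_{j+1}$; these details are exactly what the paper's terse argument leaves implicit.
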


\begin{proof}
Suppose, without loss of generality, that $X(w_0)$ is equiangular. Then, by Lemma~\ref{lem:can rotate}, there is nothing special about a perimeter tile appearing at, say, depth $1$ along the rightside boundary of $X(w_0)$, and so no perimeter location is preferred to any other amongst all rhombic tilings of $X(w_0)$.
\end{proof}

Put another way, Proposition~\ref{prop:symmetry of x(w_0)} says that if $\textsf{L}_d\textsf{-perim}$ and $\textsf{R}_d\textsf{-perim}$ are indicator variables that detect depth $d$ left-, and depth $d$ right-perimeter tiles, respectively, then
$$\sum_{T \in T(w_0)} \topperim(T) = \sum_{T \in T(w_0)} \bottomperim(T) = \sum_{T \in T(w_0)} \textsf{L}_d\textsf{-perim}(T) = \sum_{T \in T(w_0)} \textsf{R}_d\textsf{-perim}(T),$$
for any $d \in [n-1]$. Thus the total number of each type of perimeter tile (at fixed depth, if the type is left- or right-) is dependent only on $n$, and not on the type itself.

\begin{example}\label{ex:equal appearances in rhombic tilings of X(longelt{4})}
Among the eight rhombic tilings of $X(\protect\longelt{4})$ (depicted in Figure~\ref{fig:rhombic tilings of X(longelt{4})}), there are three appearances each of top-perimeter tiles, bottom-perimeter tiles, left-perimeter tiles of any fixed depth, and right-perimeter tiles of any fixed depth. 
\end{example}

\begin{figure}[htbp]
\begin{tikzpicture}
\node[draw=none,minimum size=3cm,regular polygon,regular polygon sides=8] (a) {};
\foreach \x in {(a.side 1),(a.side 5)} {\fill \x circle (2pt);}
\foreach \x in {1,...,8} {\coordinate (corner \x) at (a.side \x);}
\foreach \y [evaluate=\y as \x using \y+1] in {1,...,4} {\coordinate (side \y) at ($(corner \x)-(corner \y)$);}
\foreach \y [evaluate=\y as \x using \y+1] in {1,...,4} {\coordinate (side -\y) at ($(corner \y)-(corner \x)$);}
\foreach \y [evaluate=\y as \x using \y+1] in {1,...,7} {\draw (corner \y) -- (corner \x);}
\draw (corner 8) -- (corner 1);
\foreach \y [evaluate=\y as \x using \y+1] in {2,...,3} {\draw (corner \x) --++(side -1) --++(side -\y);}
\foreach \y [evaluate=\y as \x using \y-5] in {6,...,8} {\draw (corner \y) --++(side -4) --++(side \x);}
\end{tikzpicture}
\hspace{.25in}
\begin{tikzpicture}
\node[draw=none,minimum size=3cm,regular polygon,regular polygon sides=8] (a) {};
\foreach \x in {(a.side 1),(a.side 5)} {\fill \x circle (2pt);}
\foreach \x in {1,...,8} {\coordinate (corner \x) at (a.side \x);}
\foreach \y [evaluate=\y as \x using \y+1] in {1,...,4} {\coordinate (side \y) at ($(corner \x)-(corner \y)$);}
\foreach \y [evaluate=\y as \x using \y+1] in {1,...,4} {\coordinate (side -\y) at ($(corner \y)-(corner \x)$);}
\foreach \y [evaluate=\y as \x using \y+1] in {1,...,7} {\draw (corner \y) -- (corner \x);}
\draw (corner 8) -- (corner 1);
\foreach \y [evaluate=\y as \x using \y+1] in {2,...,3} {\draw (corner \x) --++(side -1) --++(side -\y);}
\draw (corner 6) --++(side -4);
\draw (corner 8) --++(side 2) --++(side -4);
\draw (corner 6) --++(side -3);
\end{tikzpicture}
\hspace{.25in}
\begin{tikzpicture}
\node[draw=none,minimum size=3cm,regular polygon,regular polygon sides=8] (a) {};
\foreach \x in {(a.side 1),(a.side 5)} {\fill \x circle (2pt);}
\foreach \x in {1,...,8} {\coordinate (corner \x) at (a.side \x);}
\foreach \y [evaluate=\y as \x using \y+1] in {1,...,4} {\coordinate (side \y) at ($(corner \x)-(corner \y)$);}
\foreach \y [evaluate=\y as \x using \y+1] in {1,...,4} {\coordinate (side -\y) at ($(corner \y)-(corner \x)$);}
\foreach \y [evaluate=\y as \x using \y+1] in {1,...,7} {\draw (corner \y) -- (corner \x);}
\draw (corner 8) -- (corner 1);
\draw (corner 1) --++(side 2) --++(side 1);
\draw (corner 8) --++(side 2) --++(side -4);
\draw (corner 6) --++(side -3);
\draw (corner 5) --++(side -3) --++(side -1);
\draw (corner 3) --++(side 4);
\end{tikzpicture}
\hspace{.25in}
\begin{tikzpicture}
\node[draw=none,minimum size=3cm,regular polygon,regular polygon sides=8] (a) {};
\foreach \x in {(a.side 1),(a.side 5)} {\fill \x circle (2pt);}
\foreach \x in {1,...,8} {\coordinate (corner \x) at (a.side \x);}
\foreach \y [evaluate=\y as \x using \y+1] in {1,...,4} {\coordinate (side \y) at ($(corner \x)-(corner \y)$);}
\foreach \y [evaluate=\y as \x using \y+1] in {1,...,4} {\coordinate (side -\y) at ($(corner \y)-(corner \x)$);}
\foreach \y [evaluate=\y as \x using \y+1] in {1,...,7} {\draw (corner \y) -- (corner \x);}
\draw (corner 8) -- (corner 1);
\draw (corner 8) --++(side 2);
\draw (corner 6) --++(side -3);
\draw (corner 5) --++(side -3) --++(side -1);
\draw (corner 3) --++(side 4);
\draw (corner 2) --++(side 4) --++(side 2);
\draw (corner 8) --++(side 1);
\end{tikzpicture}

\vspace{.25in}

\begin{tikzpicture}
\node[draw=none,minimum size=3cm,regular polygon,regular polygon sides=8] (a) {};
\foreach \x in {(a.side 1),(a.side 5)} {\fill \x circle (2pt);}
\foreach \x in {1,...,8} {\coordinate (corner \x) at (a.side \x);}
\foreach \y [evaluate=\y as \x using \y+1] in {1,...,4} {\coordinate (side \y) at ($(corner \x)-(corner \y)$);}
\foreach \y [evaluate=\y as \x using \y+1] in {1,...,4} {\coordinate (side -\y) at ($(corner \y)-(corner \x)$);}
\foreach \y [evaluate=\y as \x using \y+1] in {1,...,7} {\draw (corner \y) -- (corner \x);}
\draw (corner 8) -- (corner 1);
\draw (corner 8) --++(side 1) --++(side 2) --++(side 3);
\draw (corner 2) --++(side 4);
\draw (corner 3) --++(side 4);
\draw (corner 5) --++(side -2) --++(side -3);
\draw (corner 7) --++(side 1);
\end{tikzpicture}
\hspace{.25in}
\begin{tikzpicture}
\node[draw=none,minimum size=3cm,regular polygon,regular polygon sides=8] (a) {};
\foreach \x in {(a.side 1),(a.side 5)} {\fill \x circle (2pt);}
\foreach \x in {1,...,8} {\coordinate (corner \x) at (a.side \x);}
\foreach \y [evaluate=\y as \x using \y+1] in {1,...,4} {\coordinate (side \y) at ($(corner \x)-(corner \y)$);}
\foreach \y [evaluate=\y as \x using \y+1] in {1,...,4} {\coordinate (side -\y) at ($(corner \y)-(corner \x)$);}
\foreach \y [evaluate=\y as \x using \y+1] in {1,...,7} {\draw (corner \y) -- (corner \x);}
\draw (corner 8) -- (corner 1);
\draw (corner 2) --++(side 4) --++(side 3) --++(side 2);
\draw (corner 7) --++(side 	1);
\draw (corner 8) --++(side 1);
\draw (corner 4) --++(side -2) --++(side 4);
\draw (corner 2) --++(side 3);
\end{tikzpicture}
\hspace{.25in}
\begin{tikzpicture}
\node[draw=none,minimum size=3cm,regular polygon,regular polygon sides=8] (a) {};
\foreach \x in {(a.side 1),(a.side 5)} {\fill \x circle (2pt);}
\foreach \x in {1,...,8} {\coordinate (corner \x) at (a.side \x);}
\foreach \y [evaluate=\y as \x using \y+1] in {1,...,4} {\coordinate (side \y) at ($(corner \x)-(corner \y)$);}
\foreach \y [evaluate=\y as \x using \y+1] in {1,...,4} {\coordinate (side -\y) at ($(corner \y)-(corner \x)$);}
\foreach \y [evaluate=\y as \x using \y+1] in {1,...,7} {\draw (corner \y) -- (corner \x);}
\draw (corner 8) -- (corner 1);
\draw (corner 2) --++(side 3) --++(side 4) --++(side -1);
\draw (corner 4) --++(side 	-2);
\draw (corner 5) --++(side -2);
\draw (corner 1) --++(side 3) --++(side 1);
\draw (corner 7) --++(side -4);
\end{tikzpicture}
\hspace{.25in}
\begin{tikzpicture}
\node[draw=none,minimum size=3cm,regular polygon,regular polygon sides=8] (a) {};
\foreach \x in {(a.side 1),(a.side 5)} {\fill \x circle (2pt);}
\foreach \x in {1,...,8} {\coordinate (corner \x) at (a.side \x);}
\foreach \y [evaluate=\y as \x using \y+1] in {1,...,4} {\coordinate (side \y) at ($(corner \x)-(corner \y)$);}
\foreach \y [evaluate=\y as \x using \y+1] in {1,...,4} {\coordinate (side -\y) at ($(corner \y)-(corner \x)$);}
\foreach \y [evaluate=\y as \x using \y+1] in {1,...,7} {\draw (corner \y) -- (corner \x);}
\draw (corner 8) -- (corner 1);
\draw (corner 4) --++(side -2) --++(side -1) --++(side 4);
\draw (corner 1) --++(side 	3);
\draw (corner 2) --++(side 3);
\draw (corner 4) --++(side -1) --++(side -2);
\draw (corner 6) --++(side -4);
\end{tikzpicture}
\caption{The eight rhombic tilings of $X(\protect\longelt{4})$.}
\label{fig:rhombic tilings of X(longelt{4})}
\end{figure}

Our first step in the study of perimeter tiles for $w_0$ will be to bound (sharply) the number of perimeter tiles that may appear in elements of $T(w_0)$. By Corollary~\ref{cor:meaning of perimeter tiles}, this will have implications for commutation classes of $R(w_0)$.

In light of Lemma~\ref{lem:can rotate}, one perspective on Corollary~\ref{cor:always have left and right perimeter tiles} is that at least one perimeter tile appears in its entirety among any $n$ consecutive boundary edges of $X(\longelt{n})$. In other words, there cannot be $n-1$ consecutive boundary edges that intersect no perimeter tiles. (This statement need not be true for arbitrary permutations $w$, because it relies on the convexity of $X(\longelt{n})$.) This allows us to advance the result of Corollary~\ref{cor:always have left and right perimeter tiles} in this setting.

Note that in the following theorem, and in some subsequent results, we require $n > 2$. This is because $X(21)$ is, itself, a rhombus, and its sole rhombic tiling is just a single rhombus. That single tile is, simultaneously, a perimeter tile of all four types, and would be overcounted.

\begin{theorem}\label{thm:lower bound for total perimeter tiles}
For any $n > 2$,
$$\min\left\{\perim(T) : T \in T(\longelt{n})\right\} = 3.$$
\end{theorem}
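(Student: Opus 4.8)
The plan is to prove the two inequalities $\perim(T)\ge 3$ and $\min\{\perim(T):T\in T(\longelt{n})\}\le 3$ separately. For the lower bound I would reformulate the observation preceding the theorem as a covering statement. Relative to any choice of antipodal vertices of the regular $2n$-gon $X(w_0)$ as top and bottom, Corollary~\ref{cor:always have left and right perimeter tiles} guarantees a left-perimeter tile, that is, a single tile occupying two consecutive boundary edges among the $n$ edges of that leftside boundary. Since the $n$ consecutive edges between any two antipodal vertices can be realized as such a leftside boundary (Lemma~\ref{lem:can rotate}), every window of $n$ consecutive boundary edges of $X(w_0)$ fully contains at least one perimeter tile.

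For a strictly convex $2n$-gon with $n>2$, each perimeter tile is a rhombus meeting the boundary in exactly two consecutive edges, a ``domino'' straddling one convex corner, and distinct perimeter tiles occupy disjoint edge-pairs (no boundary edge lies in two tiles, as in Proposition~\ref{prop:at most n perimeter rhombi}). Indexing the $2n$ boundary edges cyclically, there are exactly $2n$ windows of $n$ consecutive edges, and a fixed domino $\{e_i,e_{i+1}\}$ is contained in precisely the $n-1$ windows whose starting index ranges over $\{i-n+2,\ldots,i\}$. Hence $k$ perimeter tiles can fully contain at most $k(n-1)$ windows, so the covering statement forces $k(n-1)\ge 2n$. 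Since $2n/(n-1)>2$ for every $n>2$, this yields $k\ge 3$, establishing $\perim(T)\ge 3$ for all $T\in T(\longelt{n})$.

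For sharpness I would exhibit a single tiling attaining $3$, namely the commutation class of the ``column'' reduced word
$$\mathbf{a}_n \ =\ (s_{n-1})(s_{n-2}s_{n-1})(s_{n-3}s_{n-2}s_{n-1})\cdots(s_1 s_2 \cdots s_{n-1}) \in R(\longelt{n}),$$
which for $n=3$ is $s_2 s_1 s_2$. The letter multiset of a commutation class is invariant under commutation, and $\mathbf{a}_n$ contains exactly one $s_1$ and exactly $n-1$ copies of $s_{n-1}$; by Corollary~\ref{cor:meaning of perimeter tiles}(c),(d) the corresponding tiling therefore has a top-perimeter tile and no bottom-perimeter tile. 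The core step is to show the heap of $\mathbf{a}_n$ has a unique minimal element and a unique maximal element, both labelled $s_{n-1}$ (the first and last letters of $\mathbf{a}_n$): any occurrence other than the first $s_{n-1}$ is preceded by a generator of adjacent index (within its block, or supplied by the previous block), which blocks it from commuting to the front, and symmetrically any occurrence other than the final $s_{n-1}$ is blocked from commuting to the back. By Corollary~\ref{cor:meaning of perimeter tiles}(a),(b) this gives exactly one left-perimeter tile and exactly one right-perimeter tile.

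It remains to verify these three tiles are genuinely distinct, so that $\perim(T)=3$. For $n>2$ no rhombus can carry two leftside together with two rightside boundary edges, so the left- and right-perimeter tiles differ, and the top-perimeter tile, meeting the boundary in one leftside and one rightside edge, is neither; equivalently their edge labels $\{n-1,n\}$, $\{1,2\}$, and $\{1,n\}$ are pairwise distinct, and by Corollary~\ref{cor:rhombi can't have same labels} distinct labels force distinct tiles. Combined with the lower bound, this gives the claimed minimum of $3$. I expect the achievability half to be the main obstacle: the counting for the lower bound is short once the window observation is in place, whereas the real care lies in the heap computation confirming that $\mathbf{a}_n$ has unique extreme elements and hence exactly one left- and one right-perimeter tile.
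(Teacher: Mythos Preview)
Your proof is correct, but both halves take a different route from the paper.

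For the lower bound, the paper argues in one line: given any $T$, take a left-perimeter tile (which exists by Corollary~\ref{cor:always have left and right perimeter tiles}), rotate so that it becomes a bottom-perimeter tile (Lemma~\ref{lem:can rotate}), and apply Corollary~\ref{cor:always have left and right perimeter tiles} again to produce a left- and a right-perimeter tile in the rotated picture; these three tiles are distinct, so $\perim(T)\ge 3$. Your window-counting argument reaches the same conclusion by a pigeonhole inequality $k(n-1)\ge 2n$. Both rest on the same two ingredients (rotation plus the existence of a side-perimeter tile), but the paper's version is shorter and avoids the auxiliary check that a perimeter tile meets the boundary in exactly two consecutive edges. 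Your approach has the virtue of being quantitative and would generalize more readily if one wanted sharper bounds in related settings.

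For sharpness, the paper simply exhibits a family of tilings in a figure (one left-, one right-, one bottom-perimeter tile, with an arbitrary $T'\in T(\longelt{n-2})$ filling the interior). You instead analyze the heap of the column word $\mathbf{a}_n$ and read off the perimeter tiles from its unique minimal and maximal elements together with Corollary~\ref{cor:meaning of perimeter tiles}(c). Your construction is the vertical reflection of the paper's (a top-perimeter tile rather than a bottom one), and your argument is self-contained in the sense that it does not rely on a picture; the price is the heap computation, which is indeed where the care lies, but your sketch of why every non-initial letter is blocked from the front and every non-final letter is blocked from the back is correct.
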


\begin{proof}
From Corollary~\ref{cor:always have left and right perimeter tiles}, we know that $T$ has at least one left-perimeter tile and at least one right-perimeter tile. Using Lemma~\ref{lem:can rotate}, rotate the $2n$-gon so that the left-perimeter tile becomes a bottom-perimeter tile. Corollary~\ref{cor:always have left and right perimeter tiles} says that this new tiling must also have left- and right-perimeter tiles. Therefore, $\perim(T) \ge 3$.

As shown in Figure~\ref{fig:three perimeter tiles}, there are tilings for which $\perim(T) = 3$, and it is possible for only two of the required three perimeter tiles in $T$ to be side-perimeter tiles.
\end{proof}

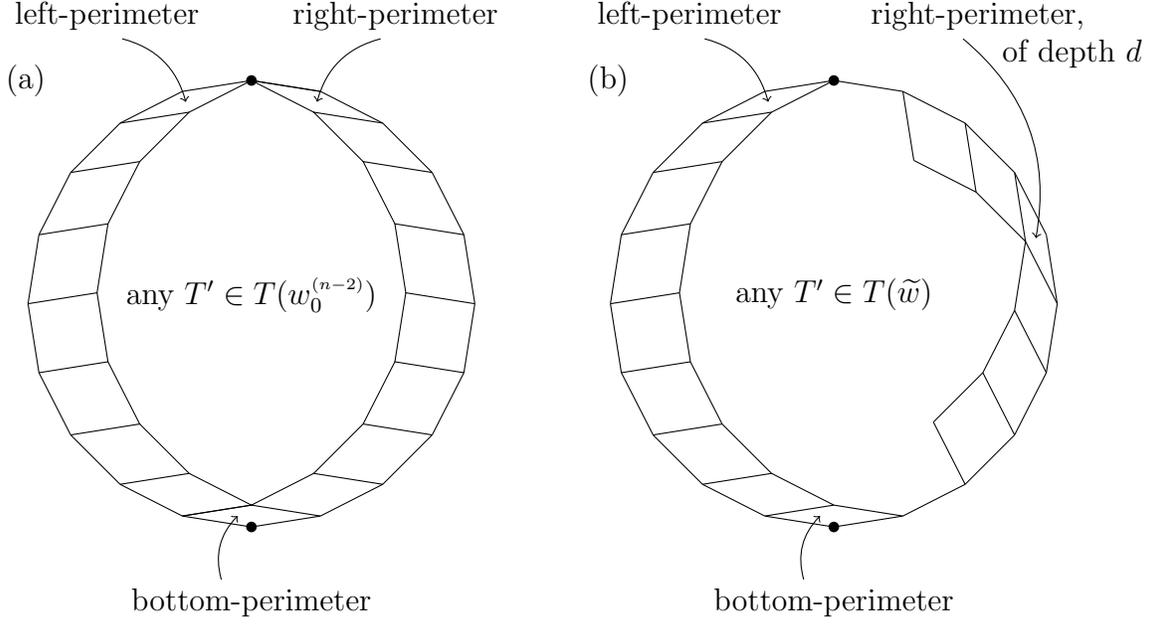
\begin{figure}[htbp]
\begin{tikzpicture}
\node[draw=none,minimum size=6cm,regular polygon,regular polygon sides=20] (a) {};
\foreach \x in {(a.side 1),(a.side 11)} {\fill \x circle (2pt);}
\foreach \x in {1,...,20} {\coordinate (corner \x) at (a.side \x);}
\foreach \y [evaluate=\y as \x using \y+1] in {1,...,10} {\coordinate (side \y) at ($(corner \x)-(corner \y)$);}
\foreach \y [evaluate=\y as \x using \y+1] in {1,...,10} {\coordinate (side -\y) at ($(corner \y)-(corner \x)$);}
\foreach \y [evaluate=\y as \x using \y+1] in {1,...,19} {\draw (corner \y) -- (corner \x);}
\draw (corner 20) -- (corner 1);
\foreach \y [evaluate=\y as \x using \y+1] in {2,...,9} {\draw (corner \x) --++(side -1) --++(side -\y);}
\foreach \y [evaluate=\y as \x using \y-11] in {12,...,20} {\draw (corner \y) --++(side -10) --++(side \x);}
\draw (corner 2)++(-1,1) node {left-perimeter};
\path[->] (corner 2)++(-.8,.7) edge[bend left] ($(corner 1)!0.5!(corner 3)$);
\draw (corner 20)++(1,1) node {right-perimeter};
\path[->] (corner 20)++(.8,.7) edge[bend right] ($(corner 19)!0.5!(corner 1)$);
\draw (corner 11)++(0,-1) node {bottom-perimeter};
\path[->] (corner 11)++(-.4,-.7) edge[bend left] ($(corner 10)!0.4!(corner 12)$);
\draw ($(corner 1)!0.5!(corner 11)$)++(0,.125) node {any $T' \in T(\longelt{n-2})$};
\draw (corner 1)++(-3,0) node {(a)};
\end{tikzpicture}
\hspace{.25in}
\begin{tikzpicture}
\node[draw=none,minimum size=6cm,regular polygon,regular polygon sides=20] (a) {};
\foreach \x in {(a.side 1),(a.side 11)} {\fill \x circle (2pt);}
\foreach \x in {1,...,20} {\coordinate (corner \x) at (a.side \x);}
\foreach \y [evaluate=\y as \x using \y+1] in {1,...,10} {\coordinate (side \y) at ($(corner \x)-(corner \y)$);}
\foreach \y [evaluate=\y as \x using \y+1] in {1,...,10} {\coordinate (side -\y) at ($(corner \y)-(corner \x)$);}
\foreach \y [evaluate=\y as \x using \y+1] in {1,...,19} {\draw (corner \y) -- (corner \x);}
\draw (corner 20) -- (corner 1);
\foreach \y in {13,14,15,16} {\draw (corner \y) --++(side -7);}
\foreach \y in {18,19,20} {\draw (corner \y) --++(side 6);}
\draw (corner 1)++(side 6)++(side 10) --++(side 9) --++(side 8) --++(side 5) --++(side 4) --++(side 3);
\draw (corner 12) --++(side -10);
\draw (corner 10) --++(side -1) coordinate (a);
\draw (a) --++(side -9) --++(side -8) --++(side -7) --++(side -6) --++(side -5) --++(side -4) --++(side -3) --++(side -2);
\foreach \y in {3,4,5,6,7,8,9} {\draw (corner \y) --++(side -1);}
\draw (corner 2)++(-1,1) node {left-perimeter};
\path[->] (corner 2)++(-.8,.7) edge[bend left] ($(corner 1)!0.5!(corner 3)$);
\draw (corner 20)++(1,1) node {right-perimeter,};
\draw (corner 20)++(2.25,.5) node {of depth $d$};
\path[->] (corner 20)++(.8,.7) edge[bend left] ($(corner 18)!0.5!(corner 16)$);
\draw (corner 11)++(0,-1) node {bottom-perimeter};
\path[->] (corner 11)++(-.4,-.7) edge[bend left] ($(corner 10)!0.4!(corner 12)$);
\draw ($(corner 1)!0.5!(corner 11)$)++(0,.125) node {any $T' \in T(\widetilde{w})$};
\draw (corner 1)++(-3,0) node {(b)};
\end{tikzpicture}
\caption{Two families of elements in $T(\protect\longelt{n})$ minimizing the number of perimeter tiles. For (b), in which the right-perimeter tile has generic depth $d$, the permutation $\widetilde{w}$ is $(n-1)(n-d-1)(n-2)(n-3)\cdots(n-d+1)(n-d-2)\cdots2(n-d) 1\in S_{n-1}$.}
\label{fig:three perimeter tiles}
\end{figure}

We now give a sharp upper bound to the total number of perimeter tiles that can appear in rhombic tilings of $X(w_0)$.

\begin{theorem}\label{thm:upper bound for total perimeter tiles}
For any $n \ge 2$,
\begin{equation}\label{eqn:max perimeter tiles}
\max\left\{\perim(T) : T \in T(\longelt{n})\right\} = 2 \left\lfloor \frac{n-1}{2}\right\rfloor + 1 =
\begin{cases}
n & \text{if $n$ is odd, and}\\
n-1 & \text{if $n$ is even,}
\end{cases}
\end{equation}
and at most $n-1$ of those perimeter tiles can be side-perimeter tiles.
\end{theorem}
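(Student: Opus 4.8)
The plan is to translate the counting of perimeter tiles into a purely combinatorial question about which \emph{corners} of the $2n$-gon a tiling can simultaneously ``cap,'' and then to extract the parity phenomenon from the cyclic pattern of edge labels. First I would pin down the local picture; assume $n\ge 3$ (the case $n=2$, where $X(21)$ is a single rhombus, is checked directly and gives $1=2\lfloor(n-1)/2\rfloor+1$). Since $X(\longelt{n})$ is convex and every tile is a rhombus with unit edges in one of $n$ directions, a tile can meet the boundary in two boundary edges only if these are the two \emph{adjacent} edges of the rhombus meeting at a common vertex: two parallel (opposite) edges of one rhombus would lie on opposite sides of the convex polygon, impossible for a unit rhombus, and three consecutive boundary edges would force two of them to be parallel, again impossible in a convex polygon. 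Hence every perimeter tile shares \emph{exactly} two consecutive boundary edges and ``caps'' exactly one of the $2n$ vertices, with type (left/right/top/bottom) determined by that vertex. So $\perim(T)$ equals the number of capped corners, and reading counterclockwise from the top the edge labels are $1,\dots,n$ down the left and $1,\dots,n$ up the right, so the $2n$ corners carry the cyclically repeated labels
$$\{1,n\},\ \{1,2\},\ \{2,3\},\ \dots,\ \{n-1,n\},\ \{1,n\},\ \{1,2\},\ \{2,3\},\ \dots,\ \{n-1,n\}.$$
In particular each unordered label occurs at exactly two corners, and those two corners are antipodal (distance $n$ apart) in the boundary cycle $C_{2n}$.

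For the upper bound I would impose the two constraints a set $S$ of capped corners must satisfy. \textbf{(i) Distinct labels:} a capped corner forces a tile whose edge labels equal the corner's labels, so by Corollary~\ref{cor:rhombi can't have same labels} no label is used twice; thus $S$ meets each of the $n$ antipodal label-pairs at most once, giving $|S|\le n$. \textbf{(ii) Independence:} if two cyclically adjacent corners were both capped, the boundary edge between them would belong to two distinct tiles (distinct because no single rhombus holds three consecutive boundary edges), which is impossible; hence $S$ is an independent set of $C_{2n}$. If $|S|=n$ then $S$ is a transversal of the $n$ antipodal pairs that is also independent. Choosing from pair $\{i,i+n\}$ the element $i+n\varepsilon_i$ with $\varepsilon_i\in\{0,1\}$, independence along the two arcs forces $\varepsilon_i\ne\varepsilon_{i+1}$ for $0\le i\le n-2$, while the two wrap-around adjacencies force $\varepsilon_{n-1}=\varepsilon_0$; an alternating sequence closes up this way only when $n$ is odd. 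Therefore $\max\perim=n$ for $n$ odd and $\max\perim\le n-1$ for $n$ even, matching $2\lfloor(n-1)/2\rfloor+1$. The side-perimeter bound is the sub-statement of (i) for the $n-1$ side-labels $\{k,k+1\}$, $k\in[n-1]$: each occurs at most once, so every tiling has at most $n-1$ side-perimeter tiles.

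Finally I must show the bound is attained, which I expect to be the \textbf{main obstacle}, since conditions (i)--(ii) are necessary but not obviously realizable by an actual tiling. For $n$ odd the target is a tiling capping every other corner around the boundary (a maximum independent set of $C_{2n}$), whose $n$ labels are then exactly the distinct pairs listed above; for $n$ even one such alternating choice is blocked and one drops a single cap. I would produce these extremal tilings explicitly and verify them, most cleanly by an induction that builds an extremal tiling of $X(\longelt{n})$ from one of $X(\longelt{n-2})$ by wrapping it in a frame of boundary rhombi contributing exactly two new capped corners while preserving the old ones, mirroring the nested construction used for the lower bound in Figure~\ref{fig:three perimeter tiles}. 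The base cases $n=2$ (one cap) and $n=3$ (the hexagon, whose three rhombi are all perimeter tiles) then propagate via $\max(n)=\max(n-2)+2$ to give $2\lfloor(n-1)/2\rfloor+1$. The delicate points are that the frame tiles genuinely cap their corners (i.e.\ are strong, not merely weak, perimeter tiles), that no previously capped corner is lost, and that the result is a legitimate element of $T(\longelt{n})$ — for which Theorem~\ref{thm:elnitsky} or a direct geometric description of the frame suffices.
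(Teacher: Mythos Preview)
Your argument is correct and rests on the same key lemma as the paper (Corollary~\ref{cor:rhombi can't have same labels}), but the packaging differs in two places worth noting. For the upper bound, the paper argues directly: if $n$ is even and all $2n$ boundary edges were covered by $n$ perimeter tiles, the cyclic sequence of edge-label pairs around the boundary would be forced into one of two patterns, each repeating a pair and hence violating Corollary~\ref{cor:rhombi can't have same labels}. Your reformulation as a transversal-plus-independent-set problem on $C_{2n}$ reaches the same parity obstruction more systematically, and your local analysis (that every perimeter tile caps exactly one corner) makes the independence constraint explicit where the paper leaves it implicit in the phrase ``the entire boundary of $X(\longelt{n})$ would belong to perimeter tiles.'' For attainment, the paper simply exhibits explicit tilings for $n=10$ and $n=11$ in Figure~\ref{fig:max perimeter tiles} and asserts they generalize, while you propose building an extremal tiling of $X(\longelt{n})$ from one of $X(\longelt{n-2})$ by wrapping it in a frame of boundary rhombi; this inductive scheme is sound---indeed the tilings in Figure~\ref{fig:max perimeter tiles} visibly have such a nested structure---and makes the recurrence $\max(n)=\max(n-2)+2$ transparent, but since you flag this as the main obstacle you should still write down the frame explicitly (which rhombi, in which order) and verify that each new frame tile genuinely caps its corner and that the inner tiling's capped corners survive as capped corners of the enlarged polygon.
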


\begin{proof}
Recall, from Proposition~\ref{prop:at most n perimeter rhombi}, that there are at most $n$ perimeter tiles.

Consider first when $n$ is even. For there to be exactly $n$ perimeter tiles, the entire boundary of $X(\longelt{n})$ would belong to perimeter tiles. This would mean that the edge labels of the perimeter tiles, read in counterclockwise order from the top vertex, are either
$$\{1,2\},\{3,4\},\ldots,\{n-1,n\}, \{1,2\}, \{3,4\}, \ldots, \{n-1,n\},$$
or
$$\{2,3\},\{4,5\},\ldots,\{n-2,n-1\},\{n,1\},\{2,3\},\{4,5\},\ldots,\{n-2,n-1\},\{n,1\}.$$
Each option would violate Corollary~\ref{cor:rhombi can't have same labels}, so $\perim(T) \le n-1$.

If $n$ is odd, and there are $n$ perimeter tiles, then the entire boundary of $X(\longelt{n})$ must belong to perimeter tiles. In particular, the parity of $n$ means that one of those tiles has edge labels $\{1,n\}$, and is either a top- or a bottom-perimeter tile. Therefore there can be at most $n-1$ side-perimeter tiles.

The number of perimeter tiles in $T$ is bounded by the quantity in Equation~\eqref{eqn:max perimeter tiles}, and it remains to show that this bound is achievable. We do so in Figure~\ref{fig:max perimeter tiles}.
\end{proof}

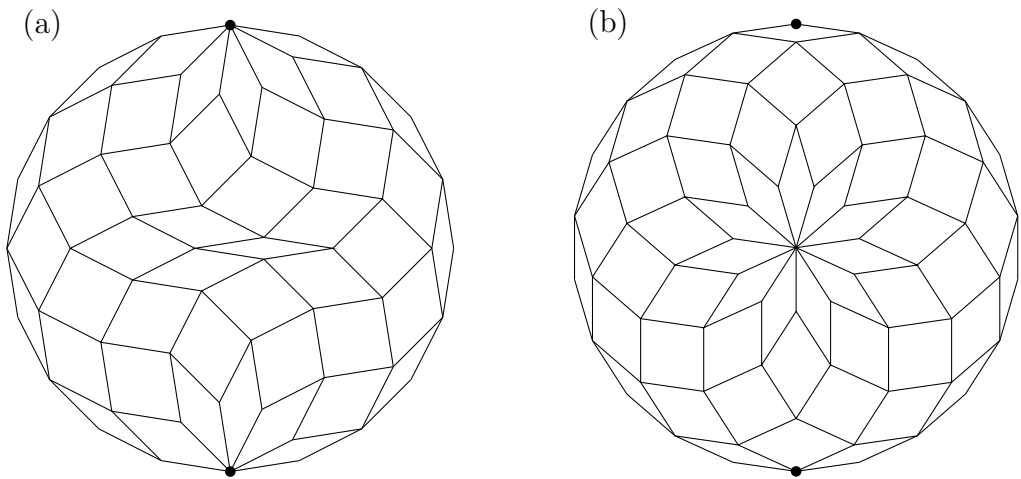
\begin{figure}[htbp]
\begin{tikzpicture}
\node[draw=none,minimum size=6cm,regular polygon,regular polygon sides=20] (a) {};
\foreach \x in {(a.side 1),(a.side 11)} {\fill \x circle (2pt);}
\foreach \x in {1,...,20} {\coordinate (corner \x) at (a.side \x);}
\foreach \y [evaluate=\y as \x using \y+1] in {1,...,10} {\coordinate (side \y) at ($(corner \x)-(corner \y)$);}
\foreach \y [evaluate=\y as \x using \y+1] in {1,...,10} {\coordinate (side -\y) at ($(corner \y)-(corner \x)$);}
\foreach \y [evaluate=\y as \x using \y+1] in {1,...,19} {\draw (corner \y) -- (corner \x);}
\draw (corner 20) -- (corner 1);
\draw (corner 2) --++(side 3) --++(side 2) --++(side 5) --++(side 4) --++(side 7) --++(side 6) --++(side 9) --++(side 8);
\draw (corner 1) --++(side 3) coordinate (corner a) --++(side 1) --++(side 5) --++(side 2) --++(side 7) --++(side 4) --++(side 9) --++(side 6) --++(side 10) --++(side 8);
\draw (corner a) --++(side 5) --++(side 1) --++(side 7) --++(side 2) --++(side 9) --++(side 4) --++(side 10) --++(side 6);
\draw (corner 1) --++(side 5) coordinate (corner b) --++(side 3) --++(side 7) --++(side 1) --++(side 9) --++(side 2) --++(side 10) --++(side 4) --++(side 8) --++(side 6);
\draw (corner b) --++(side 7) --++(side 3) --++(side 9) --++(side 1) --++(side 10) --++(side 2) --++(side 8) --++(side 4);
\draw (corner 1) --++(side 7) coordinate (corner c) --++(side 5) --++(side 9) --++(side 3) --++(side 10) --++(side 1) --++(side 8) --++(side 2) --++(side 6) --++(side 4);
\draw (corner c) --++(side 9) --++(side 5) --++(side 10) --++(side 3) --++(side 8) --++(side 1) --++(side 6) --++(side 2);
\draw (corner 1) --++(side 9) coordinate (corner d) --++(side 7) --++(side 10) --++(side 5) --++(side 8) --++(side 3) --++(side 6) --++(side 1) --++(side 4) --++(side 2);
\draw (corner d) --++(side 10) --++(side 7) --++(side 8) --++(side 5) --++(side 6) --++(side 3) --++(side 4) --++(side 1);
\draw (corner 1)++(-2.5,0) node {(a)};
\end{tikzpicture}
\hspace{.5in}
\begin{tikzpicture}
\node[draw=none,minimum size=6cm,regular polygon,regular polygon sides=22] (a) {};
\foreach \x in {(a.side 1),(a.side 12)} {\fill \x circle (2pt);}
\foreach \x in {1,...,22} {\coordinate (corner \x) at (a.side \x);}
\foreach \y [evaluate=\y as \x using \y+1] in {1,...,11} {\coordinate (side \y) at ($(corner \x)-(corner \y)$);}
\foreach \y [evaluate=\y as \x using \y+1] in {1,...,11} {\coordinate (side -\y) at ($(corner \y)-(corner \x)$);}
\foreach \y [evaluate=\y as \x using \y+1] in {1,...,21} {\draw (corner \y) -- (corner \x);}
\draw (corner 22) -- (corner 1);
\draw (corner 2) --++(side 3) --++(side 2) --++(side 5) --++(side 4) --++(side 7) --++(side 6) --++(side 9) --++(side 8) --++(side 11) --++(side 10) --++(side -2) --++(side -1) --++(side -4) --++(side -3) --++(side -6) --++(side -5) --++(side -8) --++(side -7) --++(side -10) --++(side -9) --++(side 1) coordinate (corner a) --++(side -11);
\draw (corner a) --++(side 3) --++(side -11) --++(side 5) --++(side 2) --++(side 7) --++(side 4) --++(side 9) --++(side 6) --++(side 11) --++(side 8) --++(side -2) --++(side 10) --++(side -4) --++(side -1) --++(side -6) --++(side -3) --++(side -8) --++(side -5) --++(side -10) --++(side -7) --++(side 1) coordinate (corner b) --++(side -9);
\draw (corner b) --++(side 3) --++(side -9) --++(side 5) --++(side -11) --++(side 7) --++(side 2) --++(side 9) --++(side 4) --++(side 11) --++(side 6) --++(side -2) --++(side 8) --++(side -4) --++(side 10) --++(side -6) --++(side -1) --++(side -8) --++(side -3) --++(side -10) --++(side -5) --++(side 1) coordinate (corner c) --++(side -7);
\draw (corner c) --++(side 3) --++(side -7) --++(side 5) --++(side -9) --++(side 7) --++(side -11) --++(side 9) --++(side 2) --++(side 11) --++(side 4) --++(side -2) --++(side 6) --++(side -4) --++(side 8) --++(side -6) --++(side 10) --++(side -8) --++(side -1) --++(side -10) --++(side -3) --++(side 1) coordinate (corner d) --++(side -5);
\draw (corner d) --++(side 3) coordinate (corner e) --++(side -5);
\draw (corner e) --++(side -7);
\draw (corner e) --++(side -9);
\draw (corner e) --++(side -11);
\draw (corner e) --++(side 2);
\draw (corner e) --++(side 4);
\draw (corner e) --++(side 6);
\draw (corner e) --++(side 8);
\draw (corner e) --++(side 10);
\draw (corner e) --++(side -1);
\draw (corner 1)++(-2.5,0) node {(b)};
\end{tikzpicture}
\caption{Rhombic tilings of $X(\protect\longelt{n})$ maximizing the number of perimeter tiles. The figures are drawn for $n = 10$ (tiling (a)) and $n = 11$ (tiling (b)), and can easily be generalized. As shown in Theorem~\ref{thm:upper bound for total perimeter tiles}, this maximum is $n-1 = 9$ for the tiling in (a), and $n = 11$ for the tiling in (b).}
\label{fig:max perimeter tiles}
\end{figure}

In light of these bounds, we can make the following conclusions.

\begin{corollary}\label{cor:max/min implications for commutation classes}
For any $n \ge 2$, there exist commutation classes $C,C' \in C(\longelt{n})$ with the following properties.
\begin{enumerate}\renewcommand{\labelenumi}{(\alph{enumi})}
\item All elements of $C$ have the same leftmost letter, and all have the same rightmost letter, and exactly one copy of either $s_1$ or $s_{n-1}$ appears; and 
\item If $n$ is even, then the set of leftmost letters that appear in elements of $C'$ is $\mathcal{L}$, and the set of rightmost letters that appear in elements of $C'$ is $\mathcal{R}$, where $\{\mathcal{L},\mathcal{R}\} =\{\mathcal{O},\mathcal{E}\}$ for
\begin{align*}
\mathcal{O} &= \{s_1,s_3,s_5,\ldots\} \cap \{s_1,s_2,\ldots, s_{n-1}\}\text{ and}\\
\mathcal{E} &= \{s_2,s_4,s_6,\ldots\} \cap \{s_1,s_2,\ldots, s_{n-1}\}.
\end{align*}
If $n$ is odd, then the set of leftmost letters that appear in elements of $C'$ and the set of rightmost letters that appear in elements of $C'$ are either both $\mathcal{O}$ or both $\mathcal{E}$. If they are $\mathcal{O}$ (respectively, $\mathcal{E}$), then elements of $C'$ contain exactly one copy of $s_{n-1}$ (resp., $s_1$).
\end{enumerate}
\end{corollary}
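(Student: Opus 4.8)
The plan is to read this corollary as the commutation-class translation of the two extremal results, Theorems~\ref{thm:lower bound for total perimeter tiles} and~\ref{thm:upper bound for total perimeter tiles}, via a per-class refinement of Corollary~\ref{cor:meaning of perimeter tiles}. The dictionary I would record first is this: by Theorem~\ref{thm:elnitsky}, a single unlabeled tiling $T$ is a single commutation class $C$, and its valid labelings are exactly the reduced decompositions in $C$. Since the tile labeled $1$ must have both of its rightmost edges on the rightside boundary (there being no smaller-labeled tiles), it is precisely a right-perimeter tile; letting the label $1$ range over all valid labelings shows that the set of rightmost letters occurring in elements of $C$ equals $\{s_d : T \text{ has a right-perimeter tile of depth } d\}$, and dually the set of leftmost letters equals $\{s_d : T \text{ has a left-perimeter tile of depth } d\}$. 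Top- and bottom-perimeter tiles of $T$ give, through Corollary~\ref{cor:meaning of perimeter tiles}(c),(d), the ``exactly one $s_1$'' and ``exactly one $s_{n-1}$'' conditions for $C$. Establishing this \emph{equality} (not merely the existential containment recorded in Corollary~\ref{cor:meaning of perimeter tiles}) is the conceptual heart of the argument.

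For part (a), I would take the minimal tiling produced in Theorem~\ref{thm:lower bound for total perimeter tiles} (Figure~\ref{fig:three perimeter tiles}(a)). Since $\perim(T)=3$ while Corollary~\ref{cor:always have left and right perimeter tiles} forces at least one left- and one right-perimeter tile, and this construction also contributes a bottom-perimeter tile, the three perimeter tiles are exactly one of each type. The dictionary then reads off the claim directly: a single left-perimeter tile means all elements of $C$ share one leftmost letter, a single right-perimeter tile means they share one rightmost letter, and the bottom-perimeter tile means each element contains exactly one $s_{n-1}$ (the rotated variant with a top-perimeter tile gives the $s_1$ alternative).

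For part (b), I would take the maximal tilings of Theorem~\ref{thm:upper bound for total perimeter tiles} (Figure~\ref{fig:max perimeter tiles}) and compute the depths of their side-perimeter tiles, recalling that a left-perimeter tile using the leftside edges at depths $d,d+1$ has depth $d$, and likewise on the right, where one must track that the rightside edge labels run in the reverse order $n,n-1,\ldots,1$ from top to bottom. When $n$ is even, the boundary cannot be fully covered (Corollary~\ref{cor:rhombi can't have same labels}), and the $n-1$ perimeter tiles can be arranged so that the left side is tiled by the pairs $\{1,2\},\{3,4\},\ldots,\{n-1,n\}$ at depths $1,3,\ldots,n-1$, giving leftmost letters exactly $\mathcal{O}$, while the right side uses the offset pairs at depths $2,4,\ldots,n-2$, giving rightmost letters exactly $\mathcal{E}$; one checks these two families of edge-label pairs are disjoint, so Corollary~\ref{cor:rhombi can't have same labels} is respected. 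Hence $\{\mathcal{L},\mathcal{R}\}=\{\mathcal{O},\mathcal{E}\}$. When $n$ is odd, all $2n$ boundary edges are covered by $n$ tiles, so the cyclic edge-sequence is partitioned into $n$ consecutive pairs; the parity of $n$ forces the straddling pair to sit at exactly one of the top or bottom vertex, and that tile carries edge labels $\{1,n\}$. I would then verify that both the left- and right-perimeter depths equal $1,3,\ldots,n-2$ (both $\mathcal{O}$) when the straddling tile is at the bottom, yielding the ``exactly one $s_{n-1}$'' conclusion, and symmetrically both $\mathcal{E}$ with ``exactly one $s_1$'' when it is at the top.

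I expect the main obstacle to be twofold. First, pinning down the per-class equality in the dictionary—that every possible leftmost or rightmost letter of $C$ is realized by a side-perimeter tile of the single tiling $T_C$, with no extras—requires care with the labeling conventions of Theorem~\ref{thm:elnitsky}, rather than an appeal to the weaker existential Corollary~\ref{cor:meaning of perimeter tiles}. Second, the depth/label bookkeeping in part (b) is error-prone: one must consistently convert among counterclockwise boundary position, depth, and edge label (which increase oppositely on the two sides) and confirm that the maximal tilings of Figure~\ref{fig:max perimeter tiles} realize exactly the stated depth multisets with no repeated edge labels. Everything else is a direct translation of the extremal bounds.
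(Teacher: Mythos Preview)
Your proposal is correct and follows the same route as the paper: part~(a) is read off the minimal-perimeter tiling of Theorem~\ref{thm:lower bound for total perimeter tiles} and part~(b) off the maximal-perimeter tilings of Theorem~\ref{thm:upper bound for total perimeter tiles}, translated via the tiling/commutation-class dictionary. Your per-class sharpening of Corollary~\ref{cor:meaning of perimeter tiles} (that the left/right-perimeter depths of a single $T$ give \emph{exactly} the leftmost/rightmost letters in its commutation class) is the right thing to make explicit, and the paper's two-line proof is really shorthand for precisely the unpacking you describe.
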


\begin{proof}
\begin{enumerate}\renewcommand{\labelenumi}{(\alph{enumi})}
\item This follows from Corollary~\ref{cor:meaning of perimeter tiles} and Theorem~\ref{thm:lower bound for total perimeter tiles}.
\item This follows from Corollary~\ref{cor:meaning of perimeter tiles} and Theorem~\ref{thm:upper bound for total perimeter tiles}. 
\end{enumerate}
\end{proof}

\begin{example}
The following commutation classes in $C(4321)$ illustrate Corollary~\ref{cor:max/min implications for commutation classes}.
\begin{enumerate}\renewcommand{\labelenumi}{(\alph{enumi})}
\item $\{s_1s_2s_3s_1s_2s_1, s_1s_2s_1s_3s_2s_1\}$
\item $\{s_2s_3s_1s_2s_1s_3, s_2s_1s_3s_2s_1s_3, s_2s_3s_1s_2s_3s_1, s_2s_1s_3s_2s_3s_1\}$
\end{enumerate} 
\end{example}

We close this section with a brief look at the average number of perimeter tiles that appear in a rhombic tiling of $X(w_0)$. Consider the set $T(w_0)$ with the uniform probability distribution. Viewing $\perim(T)$ as a random variable on rhombic tilings $T \in T(w_0)$ that counts the perimeter tiles, answering that question amounts to understanding
$$\mathbb{E}[\perim(T)].$$
We could similarly define $\mathbb{E}[\leftperim(T)]$, $\mathbb{E}[\rightperim(T)]$, $\mathbb{E}[\topperim(T)]$, and $\mathbb{E}[\bottomperim(T)]$. This data addresses questions like: what is the average number of letters that can appear as leftmost letters in reduced words within a single commutation class of $R(w_0)$? What is the probability that the reduced decompositions in a commutation class of $R(w_0)$ contain exactly one copy of $s_1$? In order to discuss these questions, we borrow terminology from \cite{elnitsky}.

\begin{definition}
Let $w \in \mathfrak{S}_n$ be a permutation and $T \in T(w)$ a rhombic tiling of $X(w)$. An $n$-edge path in $T$ from the top vertex of $X(w)$ to the bottom vertex is a \emph{border}.
\end{definition}

In \cite{elnitsky}, Elnitsky recognizes that rhombic tilings of convex $2n$-gons can be generated recursively, using borders. Similar recursion can be used to compute the expected values listed above, yielding double sums of border enumerations, taken over subtilings of $T(\longelt{n-2})$. We omit these formulations here, as their utility may not be worth the page-length, and instead close with data related to these issues, for small values of $n$.

For $n \in [2,6]$, the number of perimeter tiles of each (fixed depth) type among all $T(\longelt{n})$ is $1$, $1$, $3$, $20$, and $268$. Aggregate perimeter tile data, as well as $|T(\longelt{n})|$ itself, appears in Table~\ref{table:tiling frequency and number of tilings} for the same range of $n$. 
\begin{table}[htbp]
$\begin{array}{c||c|c|c|c}
&
& \sum \topperim(T) \hspace{.25in}\ 
& \sum \leftperim(T) \hspace{.25in}\ 
&\\
\raisebox{0in}[.2in][.1in]{} n & \sum \perim(T) 
& \ \ = \sum \bottomperim(T)
& \ \ = \sum \rightperim(T)
& |T(\longelt{n})|\\
\hline
\hline
\raisebox{0in}[.2in][.1in]{} 2 & 1 & 1 & 1 & 1\\
\hline
\raisebox{0in}[.2in][.1in]{} 3 & 6 & 1 & 2 & 2\\
\hline
\raisebox{0in}[.2in][.1in]{} 4 & 24 & 3 & 9 & 8\\
\hline
\raisebox{0in}[.2in][.1in]{} 5 & 200 & 20 & 80 & 62\\
\hline
\raisebox{0in}[.2in][.1in]{} 6 & 3216 & 268 & 1340 & 908
\end{array}$
\caption{The total number of perimeter tiles, overall and by type, among all $T(\protect\longelt{n})$, and $|T(\protect\longelt{n})|$ itself, for $2 \le n \le 6$. All sums are taken over $T \in T(w)$.}
\label{table:tiling frequency and number of tilings}
\end{table}
The columns of Table~\ref{table:tiling frequency and number of tilings} are entries A320944, A320945, A320946, and A006245, respectively, in \cite{oeis}. The last column of the table, which enumerates objects whose relevance precedes that of our present work, was studied by others including Knuth~\cite[\S 9]{knuth} and Ziegler~\cite[Table 2]{ziegler}. Outside of special cases and bounds like $(2^{n^2/6-O(n)},2^{n^2+n})$ (see \cite{knuth}), these values are notoriously difficult to calculate. For example, only the values for $n \le 15$ appear in \cite{oeis}.

\section{Isoperimetric results for rhombic tilings}\label{sec:general perm}

In the context of the rhombic tilings discussed in this paper, recall that there are natural interpretations of ``area'' and ``perimeter,'' as discussed in Definitions~\ref{defn:tiling-based area} and~\ref{defn:perimeter counting functions}. Because the number of rhombi in any $T \in T(w)$ is equal to the length $\ell(w)$ of the permutation, the area depends only on $w$, and is independent of the choice of $T \in T(w)$.

This sets up a framework for analyzing tiling-based isoperimetric properties. One might ask, for a given permutation $w$, what are the minimal (and, for that matter, possible) perimeters obtainable among $T \in T(w)$, and which $T$ achieve them. One might also fix an area $A$ and consider the $w$ and $T$ achieving
$$\min\{\perim(T) :  \ell(w) = A \text{ and } T \in T(w)\}.$$

The following example of this sort of analysis will be called on later in this work.

\begin{example}\label{ex:skinny polygons, 2 perimeter tiles}
For any $A > 1$, there are permutations of area $A$ and strong perimeter $2$ (that is, every rhombic tiling of Elnitsky's polygon has exactly two perimeter tiles):
$$23\cdots A(A+1)1 \hspace{.25in} \text{and} \hspace{.25in} (A+1)12\cdots A.$$
Indeed, for each of these permutations, the Elnitsky polygon has a single rhombic tiling, as depicted in Figure~\ref{fig:skinny polygons, 2 perimeter tiles}.
\end{example}

\begin{figure}[htbp]
\begin{tikzpicture}
\begin{scope}
\clip (.2,2.1) rectangle (-2,-2.1);
\node[draw=none,minimum size=4cm,regular polygon,regular polygon sides=14] (a) {};
\foreach \x in {(a.side 1),(a.side 8)} {\fill \x circle (2pt);}
\end{scope}
\foreach \x in {1,2,3,4,5,6,7,8} {\coordinate (corner \x) at (a.side \x);}
\foreach \y [evaluate=\y as \x using \y+1] in {1,2,3,4,5,6,7} {\coordinate (side \y) at ($(corner \x)-(corner \y)$);}
\foreach \y [evaluate=\y as \x using \y+1] in {1,2,3,4,5,6,7} {\coordinate (side -\y) at ($(corner \y)-(corner \x)$);}
\fill[black!20] (corner 1) -- (corner 2)  -- (corner 3) --++(side -1) --++(side -2); 
\fill[black!20] (corner 8) -- (corner 7) --++(side -1) --++(side 7) --++(side 1);
\draw (corner 1) -- (corner 2) -- (corner 3) -- (corner 4) -- (corner 5) -- (corner 6) -- (corner 7) -- (corner 8);
\foreach \x in {3,...,7} {\draw (corner \x) --++(side -1);}
\draw (corner 8) --++(side -1) --++(side -7) --++(side -6) --++(side -5) --++(side -4) --++(side -3) --++(side -2);
\end{tikzpicture}
\hspace{1in}
\begin{tikzpicture}
\begin{scope}
\clip (.2,2.1) rectangle (-2,-2.1);
\node[draw=none,minimum size=4cm,regular polygon,regular polygon sides=14] (a) {};
\foreach \x in {(a.side 1),(a.side 8)} {\fill \x circle (2pt);}
\end{scope}
\foreach \x in {1,2,3,4,5,6,7,8} {\coordinate (corner \x) at (a.side \x);}
\foreach \y [evaluate=\y as \x using \y+1] in {1,2,3,4,5,6,7} {\coordinate (side \y) at ($(corner \x)-(corner \y)$);}
\foreach \y [evaluate=\y as \x using \y+1] in {1,2,3,4,5,6,7} {\coordinate (side -\y) at ($(corner \y)-(corner \x)$);}
\fill[black!20] (corner 1) -- (corner 2) --++(side 7) --++(side -1) --++(side -7); 
\fill[black!20] (corner 8) -- (corner 7) -- (corner 6) --++(side 7) --++(side 6); 
\draw (corner 1) -- (corner 2) -- (corner 3) -- (corner 4) -- (corner 5) -- (corner 6) -- (corner 7) -- (corner 8);
\foreach \x in {2,...,6} {\draw (corner \x) --++(side 7);}
\draw (corner 1) --++(side 7) --++(side 1) --++(side 2) --++(side 3) --++(side 4) --++(side 5) --++(side 6);
\end{tikzpicture}
\caption{The only rhombic tilings of $X(2345671)$ and $X(7123456)$, each of which has exactly two perimeter tiles. The perimeter tiles have been shaded.}
\label{fig:skinny polygons, 2 perimeter tiles}
\end{figure}
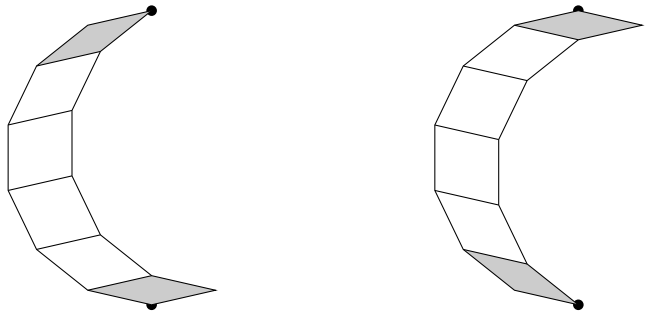

As discussed after Corollary~\ref{cor:always have left and right perimeter tiles}, if the area of a permutation is greater than $1$, then the strong perimeter of any of its tilings is greater than $1$ as well. Thus, apart from trivial cases, the strong perimeter of any tiling is always at least $2$. In other words, the permutations in Example~\ref{ex:skinny polygons, 2 perimeter tiles} illustrate minimal strong perimeters.

Combining Theorem~\ref{thm:lower bound for total perimeter tiles} with Example~\ref{ex:skinny polygons, 2 perimeter tiles} illustrates an important contrast between the contour-based isoperimetric problem and the analogous problem in this discrete setting. 

\begin{corollary}
Among all fixed-area tilings of Elnitsky polygons, the tilings that minimize strong perimeter are not the tilings of $X(w_0)$.
\end{corollary}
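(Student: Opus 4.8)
The plan is to contrast the two results that immediately precede this corollary: the lower bound of Theorem~\ref{thm:lower bound for total perimeter tiles} on perimeter tiles in $T(\longelt{n})$, and the family of skinny polygons exhibited in Example~\ref{ex:skinny polygons, 2 perimeter tiles}. Since the discrete area of a rhombic tiling is the number of tiles (Definition~\ref{defn:tiling-based area}), which equals $\ell(w)$, the natural area at which $X(w_0)$ participates is a triangular number $A = \ell(\longelt{n}) = \binom{n}{2}$, and I would fix such an $A$ with $n > 2$.

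First I would invoke Theorem~\ref{thm:lower bound for total perimeter tiles}: because $n > 2$, every tiling $T \in T(\longelt{n})$ satisfies $\perim(T) \ge 3$, so no tiling of $X(w_0)$ has strong perimeter below $3$. Next I would note that $A = \binom{n}{2} \ge 3 > 1$, so Example~\ref{ex:skinny polygons, 2 perimeter tiles} applies at this area: the permutation $23\cdots A(A+1)1 \in \mathfrak{S}_{A+1}$ has length $A$ and a unique rhombic tiling whose strong perimeter is exactly $2$. Its Elnitsky polygon has a different number of edges than $X(w_0)$, but the same number of tiles, so the two tilings have the same discrete area and may legitimately be compared. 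Combining the two observations, the minimum strong perimeter among all Elnitsky polygons of area $A$ is at most $2$, strictly less than the $3$ forced on every tiling of $X(w_0)$; hence the perimeter-minimizing tilings at this area are not tilings of $X(w_0)$.

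I expect no genuine technical obstacle, since the statement falls out directly from the two cited results. The content of the corollary is conceptual rather than computational: the regular $2n$-gon $X(w_0)$ is the discrete analogue of the circle, which in the classical setting of Theorem~\ref{thm:classical isoperimetry} is the optimizer of the isoperimetric relationship. One might therefore expect $X(w_0)$ to be perimeter-optimal at its area, but the skinny polygons of Example~\ref{ex:skinny polygons, 2 perimeter tiles} beat it, so the discrete isoperimetric behavior is genuinely opposite to the contour-based prediction. The only point requiring a moment's care is confirming that $A = \binom{n}{2}$ falls in the range $A > 1$ where the Example produces a strong-perimeter-$2$ tiling, which holds for every $n > 2$.
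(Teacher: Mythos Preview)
Your proposal is correct and follows essentially the same approach as the paper: the corollary is stated immediately after the sentence ``Combining Theorem~\ref{thm:lower bound for total perimeter tiles} with Example~\ref{ex:skinny polygons, 2 perimeter tiles} illustrates an important contrast\ldots,'' and no further proof is given. Your write-up simply makes explicit the area-matching step (taking $A = \binom{n}{2}$ and checking $A > 1$) that the paper leaves implicit.
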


Example~\ref{ex:skinny polygons, 2 perimeter tiles} gives two classes of permutations whose tilings achieve minimal strong perimeter. Moreover, for each permutation described in that lemma, the \emph{only} rhombic tiling of Elnitsky's polygon has exactly two (strong) perimeter tiles. This raises the question: what other permutations have this property? In light of the significance of perimeter tiles discussed in Corollary~\ref{cor:meaning of perimeter tiles}, there are two variants of this question that are particularly interesting.

\begin{question}\label{question:min perimeter tiles}\
\begin{enumerate}\renewcommand{\labelenumi}{(\alph{enumi})}
\item What are the permutations $w$ for which all rhombic tilings of $X(w)$ have exactly two perimeter tiles?
\item What are the permutations $w$ for which all rhombic tilings of $X(w)$ have exactly two side-perimeter tiles?
\end{enumerate}
\end{question}

Example~\ref{ex:skinny polygons, 2 perimeter tiles} addresses both parts of Question~\ref{question:min perimeter tiles}, and we now characterize all permutations that satisfy either question. We begin by recalling Corollary~\ref{cor:meaning of perimeter tiles}(b), which described how right-perimeter tiles relate to convex portions of the rightside boundary of $X(w)$. We can also find top- and bottom-perimeter tiles when Requirement~\ref{req:contiguity}(b) is satisfied.

\begin{lemma}\label{lem:top/bottom perim possible}
Let $w \in \mathfrak{S}_n$. If $w(1) \neq 1$, then there exists a rhombic tiling of $X(w)$ with a top-perimeter tile. If $w(n) \neq n$, then there exists a rhombic tiling of $X(w)$ with a bottom-perimeter tile.
\end{lemma}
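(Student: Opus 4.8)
The plan is to translate this geometric claim into a statement about reduced decompositions using Corollary~\ref{cor:meaning of perimeter tiles}, and then to build a suitable reduced word by a direct sorting argument. By part~(c) of that corollary, some tiling of $X(w)$ has a top-perimeter tile if and only if some commutation class of $w$ consists entirely of reduced decompositions containing exactly one $s_1$. Since commutation moves $s_i s_j = s_j s_i$ (with $|i-j|\ge 2$) never alter the multiset of letters of a reduced word, the number of occurrences of $s_1$ is constant across each commutation class. Consequently it suffices to exhibit a single reduced decomposition of $w$ in which $s_1$ appears exactly once. Dually, by part~(d), for the bottom statement it is enough to produce a reduced decomposition in which $s_{n-1}$ appears exactly once.

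First I would handle the top statement, assuming $w(1)\neq 1$. Let $p>1$ be the position with $w(p)=1$. Recalling that right multiplication by $s_i$ swaps the entries in positions $i$ and $i+1$, I would slide the value $1$ leftward to position $1$ by applying $s_{p-1}, s_{p-2}, \ldots, s_1$ in turn. Because $1$ is the minimal value, each of these swaps occurs at a descent and is therefore length-decreasing, undoing exactly the inversion between $1$ and the larger entry to its immediate left; this phase removes precisely the $p-1$ inversions of $w$ that involve the value $1$, and it uses $s_1$ exactly once. After $1$ is parked in position $1$, the remaining entries form a permutation of $\{2,\ldots,n\}$ supported on positions $2,\ldots,n$, which can be sorted to the identity by an ordinary bubble sort using only $s_2,\ldots,s_{n-1}$. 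Reversing the concatenated sequence of swaps gives a word for $w$; it is reduced because the two phases remove disjoint inversion sets whose union is all of the inversions of $w$, so its length equals $\ell(w)$, and it contains $s_1$ exactly once.

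The bottom statement is the mirror image: assuming $w(n)\neq n$, let $q<n$ be the position with $w(q)=n$, slide the maximal value $n$ rightward to position $n$ via $s_q, s_{q+1}, \ldots, s_{n-1}$ (again each swap is at a descent, using $s_{n-1}$ exactly once), and then sort the remaining permutation of $\{1,\ldots,n-1\}$ on positions $1,\ldots,n-1$ using only $s_1,\ldots,s_{n-2}$. I expect the only delicate point to be the bookkeeping that certifies the assembled word is reduced and that the second phase reintroduces no further $s_1$ (respectively $s_{n-1}$); both follow once one observes that the parked value never moves again and that the inversions removed in the two phases are complementary. With such a reduced decomposition in hand, Corollary~\ref{cor:meaning of perimeter tiles}(c) (respectively (d)) immediately produces the desired tiling.
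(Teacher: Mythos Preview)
Your proof is correct and takes a genuinely different route from the paper. The paper argues purely geometrically: when $w(1)\neq 1$, it lays a strip of tiles with edge labels $\{1,w(1)\},\{2,w(1)\},\ldots,\{w(1)-1,w(1)\}$ along the leftside boundary of $X(w)$, the topmost of which is the desired top-perimeter tile, and then fills the remaining region with any rhombic tiling of $X(\widetilde{w})$ for an explicitly defined $\widetilde{w}\in\mathfrak{S}_{n-1}$. You instead pass through Corollary~\ref{cor:meaning of perimeter tiles}(c)--(d), reducing the problem to exhibiting a reduced word for $w$ containing $s_1$ (respectively $s_{n-1}$) exactly once, and build such a word by a two-phase sort. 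Your argument leans on the already-established dictionary between tilings and commutation classes, which makes the existence of the tiling immediate once the reduced word is in hand; the paper's construction is more self-contained and never invokes that corollary, at the cost of needing the auxiliary permutation $\widetilde{w}$ and the accompanying picture. The two are secretly close cousins: translating the paper's strip of leftside tiles through Elnitsky's bijection yields a reduced word that likewise contains a single $s_1$, so both constructions certify the same commutation-class property from different ends.
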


\begin{proof}
We prove this by construction. Suppose $w(1) \neq 1$. Define $\widetilde{w} \in \mathfrak{S}_{n-1}$ as
$$\widetilde{w}(i) := \begin{cases}
w(i+1) & w(i+1) < w(1), \text{ and}\\
w(i+1)-1 & w(i+1) > w(1).
\end{cases}$$
Build a tiling $T \in T(w)$ using tiles with edge labels $\{i,w(1)\}$ along the leftside boundary of $X(w)$, for $i < w(1)$, and tiling the rest of $X(w)$ as in any rhombic tiling of $X(\widetilde{w})$. As constructed, this $T$ contains a top-perimeter tile. An example of this procedure in which $w \in \mathfrak{S}_7$ and $w(1) = 5$ appears in Figure~\ref{fig:top tile possible}.

The existence of bottom-perimeter tiles when $w(n) \neq n$ can be proved analogously.
\end{proof}

\begin{figure}[htbp]
\begin{tikzpicture}
\draw[white] (-3,0) node {any $T' \in T(\widetilde{w})$};
\node[draw=none,minimum size=4cm,regular polygon,regular polygon sides=14] (a) {};
\foreach \x in {(a.side 1),(a.side 8)} {\fill \x circle (2pt);}
\foreach \x in {1,...,14} {\coordinate (corner \x) at (a.side \x);}
\foreach \y [evaluate=\y as \x using \y+1] in {1,...,7} {\coordinate (side \y) at ($(corner \x)-(corner \y)$);}
\foreach \y [evaluate=\y as \x using \y+1] in {1,...,7} {\coordinate (side -\y) at ($(corner \y)-(corner \x)$);}
\foreach \y [evaluate=\y as \x using \y+1] in {1,...,7} {\draw (corner \y) -- (corner \x);}
\draw (corner 1) --++(side 5) coordinate (a) --++(side 1) --++(side 2) --++(side 3) --++(side 4);
\foreach \x in {2,3,4} {\draw (corner \x) --++(side 5);}
\draw[decorate,decoration={snake,amplitude=1mm,segment length=4mm,pre length=1mm,post length=1mm}] (a) to[bend left] (corner 8);
\draw (3,0) node {any $T' \in T(\widetilde{w})$};
\path[->] (3,.4) edge[bend right] (0,-.5);
\end{tikzpicture}
\caption{Constructing $T \in T(w)$ with a top-perimeter tile when $w(1) \neq 1$, as described in the proof of Lemma~\ref{lem:top/bottom perim possible}.}
\label{fig:top tile possible}
\end{figure}
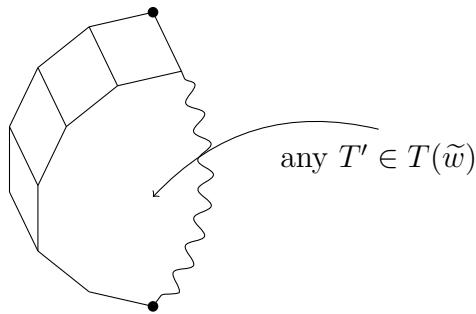

We now turn to Question~\ref{question:min perimeter tiles}(a), whose answer will bring Example~\ref{ex:skinny polygons, 2 perimeter tiles} to mind.

\begin{theorem}\label{thm:exactly two perim tiles}
For any $n > 2$, a permutation $w \in \mathfrak{S}_n$ has strong perimeter $2$ (that is, is such that all rhombic tilings of $X(w)$ have exactly two perimeter tiles) if and only if $w$ is
$$n123\cdots (n-1) \hspace{.25in} \text{or} \hspace{.25in} 234\cdots n1.$$
\end{theorem}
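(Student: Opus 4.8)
The plan is to prove sufficiency by citation and then attack necessity through the tile sitting at the top vertex. Sufficiency is immediate: taking $A = n-1 > 1$, the two permutations $234\cdots n1$ and $n12\cdots(n-1)$ are exactly those in Example~\ref{ex:skinny polygons, 2 perimeter tiles}, whose unique rhombic tilings are shown there to have exactly two perimeter tiles. For the reverse direction I would assume $w$ has strong perimeter $2$ and first record the structural consequence: by Corollary~\ref{cor:always have left and right perimeter tiles} (and the remark after it) every $T \in T(w)$ has a left- and a right-perimeter tile, and these are distinct since $n>2$; as $\perim(T)=2$, they are the \emph{only} perimeter tiles of $T$. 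Since Requirement~\ref{req:contiguity}(b) gives $w(1)\neq 1$, Lemma~\ref{lem:top/bottom perim possible} supplies a tiling $T_\ast$ with a top-perimeter tile $t$, and $t$ must therefore coincide with the left- or the right-perimeter tile of $T_\ast$.

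Next I would locate the value $1$. The tile $t$ uses the two boundary edges at the top vertex, labeled $1$ and $w(1)$, so its edge labels are $\{1,w(1)\}$. If $t$ is also a left-perimeter tile, its two leftside edges must be the consecutive edges labeled $1$ and $2$, forcing $w(1)=2$; if instead $t$ is also a right-perimeter tile, its two rightside edges must be the first two right edges, labeled $w(1)$ and $w(2)$, forcing $\{w(1),w(2)\}=\{1,w(1)\}$, i.e.\ $w(2)=1$. Hence either $w(1)=2$ or $w(2)=1$.

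The heart of the argument is the case $w(1)=2$. Here convexity of the leftside boundary means the edge directions occur in monotone angular order, so no direction lies strictly between those labeled $1$ and $2$; consequently the top corner can only be filled by a single rhombus with labels $\{1,2\}$, which is simultaneously a top- and a depth-$1$ left-perimeter tile. If some $d\ge 3$ satisfied $\ell(s_d w)<\ell(w)$, then Corollary~\ref{cor:meaning of perimeter tiles}(a) would let me build a tiling with a depth-$d$ left-perimeter tile into which I may also drop the $\{1,2\}$ corner tile (its edges, labeled $1,2$, are disjoint from those labeled $d,d+1$); together with the guaranteed right-perimeter tile this gives $\perim\ge 3$, a contradiction. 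Also $w(1)=2$ gives $w^{-1}(2)=1$, which rules out $d=2$. Thus $d=1$ is the only left descent, so $2,3,\dots,n$ appear in increasing positional order with $1$ to the right of $2$; combined with $w(1)=2$ this says $w$ is $23\cdots n$ with $1$ inserted, and Requirement~\ref{req:contiguity}(b) applied just after the inserted $1$ forces the insertion to the end, giving $w=234\cdots n1$. For the remaining case $w(2)=1$ I would invoke inverse symmetry: reflecting $X(w)$ through the vertical axis carries it to $X(w^{-1})$, sends tilings to tilings, and swaps left/right perimeter tiles, so strong perimeter is inverse-invariant; since $w(2)=1$ is equivalent to $(w^{-1})(1)=2$, the previous case gives $w^{-1}=234\cdots n1$ and hence $w=n12\cdots(n-1)$.

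The hard part will be the case $w(1)=2$, specifically the convexity claim that the $\{1,2\}$ corner tile is \emph{forced}, so that no second left-perimeter tile can coexist with it undetected; this is what converts ``strong perimeter $2$'' into the sharp statement that $1$ is the only left descent. The supporting ingredients — the inverse-invariance of strong perimeter (a standard property of Elnitsky's reflection $X(w)\mapsto X(w^{-1})$) and the connectivity bookkeeping that pins the inserted $1$ to the final position — I expect to be routine.
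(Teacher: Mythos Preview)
Your proof is correct and opens the same way as the paper—invoke Lemma~\ref{lem:top/bottom perim possible} to get a top-perimeter tile, argue it must coincide with a side-perimeter tile, derive $w(1)=2$ or $w(2)=1$, and observe the resulting corner tile is forced in every tiling—but then diverges. The paper repeats the same reasoning at the \emph{bottom} vertex (forcing $w(n-1)=n$ in Case~1 and $w(n)=n-1$ in Case~2), so that every tiling now contains two specific forced perimeter tiles; it then finishes both cases simultaneously via Corollary~\ref{cor:meaning of perimeter tiles}(b), ruling out any additional right descent. You instead stay at the top, use Corollary~\ref{cor:meaning of perimeter tiles}(a) on \emph{left} descents to pin down Case~1, and handle Case~2 by the inverse symmetry $T(w)\leftrightarrow T(w^{-1})$ (word reversal on reduced decompositions). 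Your route trades a second application of Lemma~\ref{lem:top/bottom perim possible} for the lemma that strong perimeter is inverse-invariant; the paper's route is more self-contained and treats the two cases in parallel. Both are short and clean.
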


\begin{proof}
From Example~\ref{ex:skinny polygons, 2 perimeter tiles}, the permutations $n123\cdots (n-1)$ and $234\cdots n1$ have the desired perimeter property.

Now suppose that all rhombic tilings of $X(w)$ have exactly two perimeter tiles. By Requirement~\ref{req:contiguity}(b) and Lemma~\ref{lem:top/bottom perim possible}, there exists $T \in T(w)$ with a top-perimeter tile. Therefore, by Corollary~\ref{cor:always have left and right perimeter tiles}, if $T$ is to have exactly two perimeter tiles, then this top-perimeter tile must also be a left- or right-perimeter tile. To be a left-perimeter tile (call this Case 1), we would have to have $w(1) = 2$, whereas to be a right-perimeter tile (Case 2), we would have to have $w(2) = 1$. In each case, that top-perimeter tile is consequently forced in any rhombic tiling of $X(w)$.

By a similar argument, we find that any rhombic tiling of $X(w)$ contains a bottom-perimeter tile. Recall, again, Corollary~\ref{cor:always have left and right perimeter tiles}. In Case 1, then, this bottom-perimeter tile must also be a right-perimeter tile and so $w(n-1) = n$. On the other hand, in Case 2, the bottom-perimeter tile would also be a left-perimeter tile, and so $w(n) = n-1$.

In each of these cases, we have identified two (distinct, because $n > 2$) perimeter tiles that exist in every element of $T(w)$. 

To satisfy the requirements of our hypothesis about the strong perimeter of $w$, Corollary~\ref{cor:meaning of perimeter tiles}(b) implies that there can be no $w(d) > w(d+1)$ besides $d = n-1$ in Case 1, and $d = 1$ in Case 2. Thus $w = 234\cdots n1$ in Case 1, and $w = n123\cdots (n-1)$ in Case 2.
\end{proof}

We now see that Question~\ref{question:min perimeter tiles}(a) was highly restrictive --- there only two permutations in $\mathfrak{S}_n$ with the desired property, and the Elnitsky polygon for each of those permutations only has one rhombic tiling.

In contrast to the result of Theorem~\ref{thm:exactly two perim tiles}, Question~\ref{question:min perimeter tiles}(b) seeks to address a more Coxeter-focused version of the isoperimetric problem in the context of these rhombic tilings. More specifically, this question seeks to know when there is only one choice of leftmost letter and only one choice of rightmost letter within any commutation class of reduced decompositions of $w \in \mathfrak{S}_n$, without further restrictions on the number of $s_1$ or $s_{n-1}$ letters that might appear in those reduced decompositions.

\begin{theorem} \label{thm:exactly two side-perim tiles}
For any $n > 2$, a permutation $w \in \mathfrak{S}_n$ is such that all rhombic tilings of $X(w)$ have exactly two side-perimeter tiles if and only if $w$ has the form 
$$(k+1)(k+2)\cdots n 12 \cdots k \hspace{.25in} \text{or} \hspace{.25in} (k+1)(k+2)\cdots n k 12\cdots (k-1)$$
for some $k \in [n-1]$. 
\end{theorem}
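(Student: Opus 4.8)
The plan is to translate the side-perimeter condition into a pair of descent conditions, one on $w$ and one on $w^{-1}$, and then to use Requirement~\ref{req:contiguity}(b) to force the one-line notation into the two stated families.

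\emph{Reduction.} First I would observe that for $n>2$ no single tile can be both a left- and a right-perimeter tile: that would require all four of its edges to lie on the boundary, which only happens for $w=21$. By Corollary~\ref{cor:always have left and right perimeter tiles} (and the remark following it) every $T\in T(w)$ contains a left-perimeter tile and a right-perimeter tile, and for $n>2$ these are distinct. Hence every tiling has at least two side-perimeter tiles, and the number equals $\leftperim(T)+\rightperim(T)$; equality with $2$ holds exactly when $\leftperim(T)=\rightperim(T)=1$. So the hypothesis is equivalent to demanding $\rightperim(T)=1$ \emph{and} $\leftperim(T)=1$ for every $T\in T(w)$.

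\emph{The two perimeter counts.} Next I would characterize $\rightperim(T)\equiv 1$. Reading top to bottom, the rightside boundary edges of $X(w)$ are labeled $w(1),\ldots,w(n)$; a right-perimeter tile consumes two consecutive such edges $w(d),w(d+1)$ at a convex corner, which occurs precisely when $w(d)>w(d+1)$, i.e.\ at a descent $d$ of $w$ (the geometric content of Corollary~\ref{cor:meaning of perimeter tiles}(b)). Two right-perimeter tiles coexist in one tiling if and only if their descent positions are non-adjacent: adjacent descents share the middle boundary edge, which can belong to only one tile, while non-adjacent descents can be peeled off independently (passing to $ws_{d_1}s_{d_2}$, which still admits a rhombic tiling). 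Since $\rightperim(T)\ge 1$ always, it follows that $\rightperim(T)\equiv 1$ iff $D(w)$ is $\{i\}$ or $\{i,i+1\}$. By the mirror argument on the leftside boundary, where the edges are labeled $1,\ldots,n$ and Corollary~\ref{cor:meaning of perimeter tiles}(a) (together with Corollary~\ref{cor:rhombi can't have same labels}) replaces the convexity criterion, I would conclude $\leftperim(T)\equiv 1$ iff $D(w^{-1})$ is $\{j\}$ or $\{j,j+1\}$.

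\emph{Pinning down $w$.} Now I would combine these with Requirement~\ref{req:contiguity}(b), noting $D(w^{-1})=\{v:\ v \text{ lies to the right of } v+1 \text{ in } w\}$. If $D(w)=\{i\}$, the values of $w$ split into two increasing runs $A\sqcup B$, and $D(w^{-1})=\{v:\ v\in B,\ v+1\in A\}$ contains no two consecutive integers, so the left condition forces $|D(w^{-1})|=1$; encoding the runs by which values lie in $A$ and deleting every forbidden proper initial segment $\{1,\ldots,r\}$ collapses $w$ to $(k+1)\cdots n\,1\cdots k$. If instead $D(w)=\{i,i+1\}$, then $w$ has three increasing runs with a singleton middle run; a singleton $D(w^{-1})$ is impossible (a single-descent $w^{-1}$ would force $D(w)$ to be an antichain), so $D(w^{-1})=\{j,j+1\}$. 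Reading $D(w^{-1})$ as the descents of the run-index word forces that word to be $1^a3^b\,2\,1^c3^d$, and Requirement~\ref{req:contiguity}(b) kills the outer blocks ($a=d=0$), leaving $(k+1)\cdots n\,k\,1\cdots(k-1)$.

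\emph{Converse and main obstacle.} For the converse, a direct computation of the descent sets of the two families—and of their inverses, under which each family is stable—shows $D(w)$ and $D(w^{-1})$ are each a single consecutive block of size one or two, so by the equivalences above every tiling has exactly two side-perimeter tiles; one also verifies these permutations satisfy Requirement~\ref{req:contiguity}(b). I expect the hard part to be the two coexistence claims in the second paragraph, namely that non-adjacent descents can always be realized simultaneously as perimeter tiles while adjacent ones cannot, and the transfer of the rightside argument to the leftside boundary: there every corner is already convex, so ``descent'' and ``tile exists'' must be reconnected through Corollary~\ref{cor:meaning of perimeter tiles}(a) and the label-uniqueness of Corollary~\ref{cor:rhombi can't have same labels} rather than through convexity.
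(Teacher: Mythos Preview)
Your proposal is correct and follows essentially the same strategy as the paper: reduce the hypothesis to the constraint that $D(w)$ and $D(w^{-1})$ are each a singleton or a consecutive pair (via the coexistence of perimeter tiles at non-adjacent descent positions), and then solve for $w$ under Requirement~\ref{req:contiguity}(b). Your organization is slightly more uniform---the paper handles the one-descent case by an ad hoc argument about $w(1)$ and $w(n)$ and only invokes $w^{-1}$ in the two-descent case, whereas you use the descent-set-of-inverse framework and the run-index word throughout---but the underlying ideas are the same.
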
 

\begin{proof}
First note that by Corollary~\ref{cor:meaning of perimeter tiles}, any permutation of the type listed in the theorem statement does indeed have the desired property:
\begin{itemize}
\item $T \in T((k+1)(k+2)\cdots n 12 \cdots k)$ can (in fact, must) only have a right-perimeter tile of depth $n-k$, and no other. Similarly, it can (in fact, must) only have a left-perimeter tile of depth $k$. The edge labels of the right-perimeter tile are $\{1,n\}$, and the edge labels of that left-perimeter tile are $\{k, k+1\}$.
\item $T \in T((k+1)(k+2)\cdots n k 12\cdots (k-1))$ can only have right-perimeter tiles of depths $n-k$ or $n-k+1$. Because these depths differ by $1$, they cannot both be right-perimeter tiles in $T$. Similarly, $T$ can only have left-perimeter tiles of depths $k-1$ or $k$. Because these depths differ by $1$, they cannot both be left-perimeter tiles in $T$. The edge labels of the right-perimeter tile are $\{k,n\}$ or $\{1,k\}$, respectively, and the edge labels of the left-perimeter tile are $\{k-1,k\}$ or $\{k,k+1\}$, respectively.
\end{itemize}

Suppose that all rhombic tilings of $X(w)$ have exactly two side-perimeter tiles, and recall that $w(1) \neq 1$ and $w(n) \neq n$ by Requirement~\ref{req:contiguity}(b). By Corollary~\ref{cor:always have left and right perimeter tiles}, one of these side-perimeter tiles must be a left-perimeter tile, one a right-perimeter tile, and since $n > 2$ these tiles must be distinct. Suppose that $w(i) > w(i+1)$ and $w(j) > w(j+1)$ for $|i-j| > 1$. Corollary~\ref{cor:meaning of perimeter tiles}(b) means there is $T_1 \in T(w)$ with a right-perimeter tile $t_1$ at depth $i$. Because $i$ and $j$ differ by at least $2$, that corollary also produces $T_2 \in T(ws_i)$ with a right-perimeter tile $t_2$ at depth $j$. Again because $|i-j| > 1$, appending $t_1$ and $t_2$ as right-perimeter tiles to any $T \in T(ws_is_j)$ will produce a tiling in $T(w)$ with right-perimeter tiles of depths $i$ and $j$. Thus, to have exactly one right-perimeter tile in any tiling, there can be no such $\{i,j\}$. So $w$ has at most two descents, and if there are two descents then they must be consecutive.

Suppose that $w$ has exactly one descent. Then, because $w(1) \neq 1$ and $w(n) \neq n$, the permutation $w$ must be such that $w(1) < w(2) < \cdots < w(m) = n$ and $w(m+1) = 1 < w(m+2) < \cdots < w(n)$ for some $m \in [n-1]$. In particular, in the one-line notation for $w$, $w(1)$ appears to the left of $w(1) - 1$, and $w(n)+1$ appears to the left of $w(n)$. Suppose, for the moment, that $w(n) + 1 \neq w(1)$. Then $w(n) + 1 = w(i)$ for $i \in [2,m]$ and $w(1) - 1 = w(j)$ for $j \in [m+1,n-1]$. Thus $w(n) > w(1) - 1$. In fact, because $w(n) \neq w(1)$, it must be that $w(n) > w(1)$. Therefore $s_{w(n)}$ and $s_{w(1)-1}$ commute and $w$ has reduced decompositions
$$s_{w(1)-1}s_{w(n)}s_{i_3}\cdots s_{i_{\ell(w)}} \text{ and } s_{w(n)}s_{w(1)-1}s_{i_3}\cdots s_{i_{\ell(w)}},$$
differing only by a commutation. Therefore the unlabeled tiling of $X(w)$ that corresponds to both of these reduced decompositions will have two left-perimeter tiles: at depths $w(1)-1$ and $w(n)$. This is a contradiction, and so it must be that $w(n) + 1 = w(1)$, and hence
$$w = (k+1)(k+2)\cdots n 12 \cdots k.$$

Now suppose that $w$ has exactly two descents. Then, again, because $w(1) \neq 1$ and $w(n) \neq n$, the permutation $w$ must must the form $w(1) < w(2) < \cdots < w(m) = n$, $w(m+1) = k$, and $w(m+2) = 1 < w(m+3) < \cdots < w(n)$ for some $m \in [n-2]$ and $k \in [2,n-1]$. To avoid two left-perimeter tiles in the same $T \in T(w)$, Corollary~\ref{cor:meaning of perimeter tiles} tells us that these same sorts of conclusions must hold for the inverse permutation $w^{-1}$. This means that $w^{-1}(k+1) = 1$ and $w^{-1}(k-1) = n$, from which we obtain
$$w = (k+1)(k+2)\cdots n k 12\cdots (k-1),$$
as desired.
\end{proof}

Although it was not requested in Questions~\ref{question:min perimeter tiles}(a) or~(b), the results of Theorems~\ref{thm:exactly two perim tiles} and~\ref{thm:exactly two side-perim tiles} help us understand the elements of $T(w)$ in those settings. By \cite[Theorem 6.4]{tenner rdpp}, no tiling of Elnitsky's polygon for a $321$-avoiding permutation will have any subhexagons, which means that the polygon has exactly one rhombic tiling (recovering a result of \cite{bjs}). This accounts for all of the permutations from Theorems~\ref{thm:exactly two perim tiles} and~\ref{thm:exactly two side-perim tiles} except for the second category of the latter theorem, with $k > 1$. Permutations in that class have $(n-k)(k-1)$ $321$-patterns, and so there are multiple rhombic tilings of their Elnitsky polygons. The choice of left-perimeter tile (edge labels $\{k-1,k\}$ or $\{k,k+1\}$) and the choice of right-perimeter tile (edge labels $\{1,k\}$ or $\{k,n\}$) are independent of each other, as long as $2 < k < n-1$. In each case, there will be no other side-perimeter tiles, although there will be multiple ways to tile the interior of the polygon. Example polygons for the permutations described in Theorem~\ref{thm:exactly two side-perim tiles}, and what is forced in their tilings, are depicted in Figure~\ref{fig:exactly two side-perimeter tiles}.

\begin{figure}[htbp]
\begin{tikzpicture}
\begin{scope}
\clip (.2,3) rectangle (-2,-3);
\node[draw=none,minimum size=5cm,regular polygon,regular polygon sides=22] (a) {};
\foreach \x in {(a.side 1),(a.side 12)} {\fill \x circle (2pt);}
\end{scope}
\foreach \x in {1,...,12} {\coordinate (corner \x) at (a.side \x);}
\foreach \y [evaluate=\y as \x using \y+1] in {1,...,11} {\coordinate (side \y) at ($(corner \x)-(corner \y)$);}
\foreach \y [evaluate=\y as \x using \y+1] in {1,...,11} {\coordinate (side -\y) at ($(corner \y)-(corner \x)$);}
\fill[black!20] (corner 4) -- (corner 5) -- (corner 6) --++(side -4) --++(side -5); 
\fill[black!20] (corner 12)++(side -4)++(side -3)++(side -2) --++(side -1) --++(side -11) --++(side 1) --++(side 11); 
\draw (corner 1) -- (corner 2) -- (corner 3) -- (corner 4) -- (corner 5) -- (corner 6) -- (corner 7) -- (corner 8) -- (corner 9) -- (corner 10) -- (corner 11) -- (corner 12);
\draw (corner 1) --++(side 5) coordinate coordinate (corner 22) --++(side 6) coordinate (corner 21) --++(side 7) coordinate (corner 20) --++(side 8) coordinate (corner 19) --++(side 9) coordinate (corner 18) --++(side 10) coordinate (corner 17) --++(side 11) coordinate (corner 16) --++(side 1) coordinate (corner 15) --++(side 2) coordinate (corner 14) --++(side 3) coordinate (corner 13) --++(side 4);
\foreach \x in {6,...,11} {\draw (corner \x) --++(side -4) --++(side -3) --++(side -2) --++(side -1);}
\foreach \x in {2,3,4} {\draw (corner \x) --++(side 5) --++(side 6) --++(side 7) --++(side 8) --++(side 9) --++(side 10) --++(side 11);}
\draw (-2.75,2.4) node {(a)};
\draw ($(corner 1)!0.5!(corner 2)$) node[above] {$1$};
\draw ($(corner 2)!0.5!(corner 3)$) node[above] {$2$};
\draw ($(corner 3)!0.65!(corner 4)$) node[above] {$3$};
\draw ($(corner 4)!0.35!(corner 5)$) node[left] {$4$};
\draw ($(corner 5)!.5!(corner 6)$) node[left] {$5$};
\draw ($(corner 6)!.5!(corner 7)$) node[left] {$6$};
\draw ($(corner 7)!.5!(corner 8)$) node[left] {$7$};
\draw ($(corner 8)!.65!(corner 9)$) node[left] {$8$};
\draw ($(corner 9)!.35!(corner 10)$) node[below] {$9$};
\draw ($(corner 10)!0.5!(corner 11)$) node[below] {$10$};
\draw ($(corner 11)!0.5!(corner 12)$) node[below] {$11$};
\draw ($(corner 1)!.5!(corner 22)$) node[right] {$5$};
\draw ($(corner 22)!.5!(corner 21)$) node[right] {$6$};
\draw ($(corner 21)!.5!(corner 20)$) node[right] {$7$};
\draw ($(corner 20)!.35!(corner 19)$) node[right] {$8$};
\draw ($(corner 19)!.65!(corner 18)$) node[above] {$9$};
\draw ($(corner 18)!0.5!(corner 17)$) node[above] {$10$};
\draw ($(corner 17)!0.5!(corner 16)$) node[above] {$11$};
\draw ($(corner 16)!0.5!(corner 15)$) node[below] {$1$};
\draw ($(corner 15)!0.5!(corner 14)$) node[below] {$2$};
\draw ($(corner 14)!0.35!(corner 13)$) node[below] {$3$};
\draw ($(corner 13)!0.65!(corner 12)$) node[right] {$4$};

\end{tikzpicture}
\hspace{.5in}
\begin{tikzpicture}
\begin{scope}
\clip (.2,3) rectangle (-2,-3);
\node[draw=none,minimum size=5cm,regular polygon,regular polygon sides=22] (a) {};
\foreach \x in {(a.side 1),(a.side 12)} {\fill \x circle (2pt);}
\end{scope}
\foreach \x in {1,...,12} {\coordinate (corner \x) at (a.side \x);}
\foreach \y [evaluate=\y as \x using \y+1] in {1,...,11} {\coordinate (side \y) at ($(corner \x)-(corner \y)$);}
\foreach \y [evaluate=\y as \x using \y+1] in {1,...,11} {\coordinate (side -\y) at ($(corner \y)-(corner \x)$);}
\draw (corner 1) -- (corner 2) -- (corner 3) -- (corner 4) -- (corner 5) -- (corner 6) -- (corner 7) -- (corner 8) -- (corner 9) -- (corner 10) -- (corner 11) -- (corner 12);
\draw (corner 1) --++(side 5) coordinate (corner 22) --++(side 6) coordinate (corner 21) --++(side 7) coordinate (corner 20) --++(side 8) coordinate (corner 19) --++(side 9) coordinate (corner 18) --++(side 10) coordinate (corner 17) --++(side 11) coordinate (16) --++(side 4) coordinate (corner 15) --++(side 1) coordinate (corner 14)  --++(side 2) coordinate (corner 13) --++(side 3);
\draw[densely dotted] (corner 17) --++(side 4) --++(side 11);
\draw[dashed] (corner 16) --++(side 1) --++(side 4);
\draw[densely dotted] (corner 3) --++(side 4) --++(side 3);
\draw[dashed] (corner 4) --++(side 5) --++(side 4);
\draw (-2.75,2.4) node {(b)};
\draw ($(corner 1)!0.5!(corner 2)$) node[above] {$1$};
\draw ($(corner 2)!0.5!(corner 3)$) node[above] {$2$};
\draw ($(corner 3)!0.65!(corner 4)$) node[above] {$3$};
\draw ($(corner 4)!0.35!(corner 5)$) node[left] {$4$};
\draw ($(corner 5)!.5!(corner 6)$) node[left] {$5$};
\draw ($(corner 6)!.5!(corner 7)$) node[left] {$6$};
\draw ($(corner 7)!.5!(corner 8)$) node[left] {$7$};
\draw ($(corner 8)!.65!(corner 9)$) node[left] {$8$};
\draw ($(corner 9)!.35!(corner 10)$) node[below] {$9$};
\draw ($(corner 10)!0.5!(corner 11)$) node[below] {$10$};
\draw ($(corner 11)!0.5!(corner 12)$) node[below] {$11$};
\draw ($(corner 1)!.5!(corner 22)$) node[right] {$5$};
\draw ($(corner 22)!.5!(corner 21)$) node[right] {$6$};
\draw ($(corner 21)!.5!(corner 20)$) node[right] {$7$};
\draw ($(corner 20)!.35!(corner 19)$) node[right] {$8$};
\draw ($(corner 19)!.65!(corner 18)$) node[above] {$9$};
\draw ($(corner 18)!0.5!(corner 17)$) node[above] {$10$};
\draw ($(corner 17)!0.5!(corner 16)$) node[above] {$11$};
\draw ($(corner 16)!0.65!(corner 15)$) node[right] {$4$};
\draw ($(corner 15)!0.5!(corner 14)$) node[below] {$1$};
\draw ($(corner 14)!0.5!(corner 13)$) node[below] {$2$};
\draw ($(corner 13)!0.35!(corner 12)$) node[below] {$3$};
\end{tikzpicture}
\caption{Examples of $X(w)$ for permutations $w$ satisfying the statement of Theorem~\ref{thm:exactly two side-perim tiles}. Both permutations have $n = 11$ and $k = 4$, with (a) having the first form in the theorem statement and (b) having the second. The polygon in (a) has exactly one rhombic tiling because the permutation has no $321$-pattern, and its two side-perimeter tiles have been shaded. The polygon in (b) has many rhombic tilings because the permutation has twenty-one $321$-patterns, but there can only ever be one left-perimeter tile and one right-perimeter tile in any particular rhombic tiling. The possible left- and right-perimeter tiles (which can be chosen independently of each other in this case) are indicated with dashed and dotted edges in the figure.}
\label{fig:exactly two side-perimeter tiles}
\end{figure}
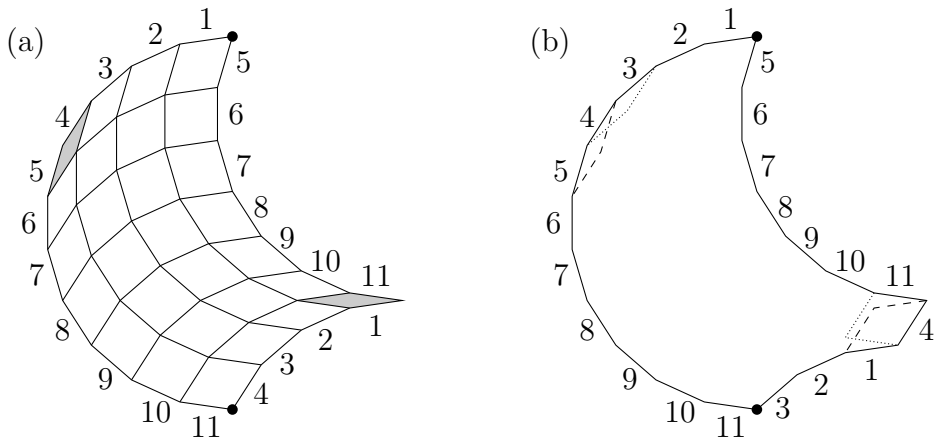

\section{Perimeter tiles in domino tilings: an introduction}\label{sec:domino}

A highly active area of mathematical research (in combinatorics and statistical mechanics, in particular) examines properties and enumerations of domino tilings. See, for example, \cite{cohn kenyon propp, elkies kuperberg larsen propp, kasteleyn, stanley domino}, among many others. The range of questions, regions, machinery, restrictions, and so on, is quite broad. Here, we focus our attention on a small selection of problems, with the goal of drawing attention to what could be an interesting area of research.

\begin{definition}
A \emph{domino} is a $1 \times 2$ rectangle. A \emph{domino tiling} of a region $R$ is a tiling of $R$ by dominoes.
\end{definition}

Of course, not every region can be tiled by dominoes. For example, such a region must have sides of integer lengths, only right angles (internal or external) along its boundary, and its area must be even. Throughout this section, assume that all regions are tileable by dominoes.

In light of the tiling-based isoperimetric analysis of Sections~\ref{sec:long element} and~\ref{sec:general perm}, it is interesting to consider such a perspective for domino tilings. In this section, we present a smattering of results about perimeter dominoes, with the hope that their range will inspire further research into the many remaining questions on this topic. Proofs are kept brief, when they do appear.

Recall Definition~\ref{defn:perimeter tile generally}. which designated a tile as a (strong) perimeter tile based on the number of edges it shared with the boundary of the region, and the paragraph preceding it.

\begin{definition}\label{defn:perimeter domino}
Consider a domino tiling of a region $R$ and a domino $d$ in that tiling. If the domino $d$ shares a length-$2$ edge with the boundary of $R$, then $d$ is a \emph{(strong) perimeter domino}. Again, we distinguish these from \emph{weak perimeter dominoes}, which share a positive-length section of boundary with the boundary of the region but which are not strong.
\end{definition}

Definition~\ref{defn:perimeter domino} captures when a choice of domino orientation leads to greater coincidence with the boundary than might otherwise have occurred. For example, a domino positioned in a ``convex'' corner of a region would necessarily share a contiguous length-$2$ section of its boundary with the boundary of $R$, as shown in Figure~\ref{fig:corner domino}, but that fact does not depend on the orientation of that domino.

\begin{figure}[htbp]
\begin{tikzpicture}[scale=.5]
\draw[densely dashed] (1,1) -- (2,1) -- (2,0) -- (0,0);
\draw (0,0) -- (0,1) -- (1,1);
\foreach \x in {(0,0), (0,1), (1,1)} {\fill \x circle (3pt);}
\draw[decorate,decoration={snake,amplitude=1mm,segment length=4mm,pre length=1mm,post length=.5mm}] (1,1) to[bend left] (3,2) to[bend left] (5,-3) to[bend left] (.5,-6) to[bend left] (-4,-3) to[bend left] (0,0);
\end{tikzpicture}
\caption{A domino in a convex corner of a region necessarily shares a contiguous length-$2$ section of its boundary with the region.}
\label{fig:corner domino}
\end{figure}
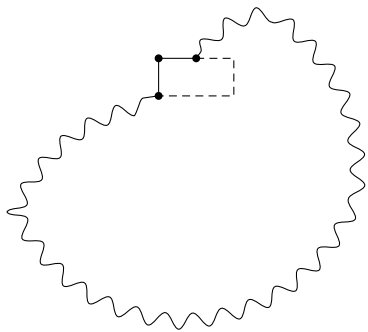

\begin{definition}
Let $R_{m,n}$ denote the $m\times n$ rectangle. Because we will use it frequently, set $R_n := R_{2,n}$. For a region $R$, let $\mathcal{D}(R)$ be the set of domino tilings of $R$.
\end{definition}

Not only is the region $R_n$ of manageable size and complexity, it also has attractive enumerative properties and lends itself well to recursive procedures. For example, it is an easy exercise to show that domino tilings of $R_n$ are enumerated by the Fibonacci number $f(n+1)$, where $f(0) = 0$ \cite[A000045]{oeis}.

We begin our analysis of perimeter tiles in $R_n$ with an analogue to Theorems~\ref{thm:lower bound for total perimeter tiles} and~\ref{thm:upper bound for total perimeter tiles}. As in previous sections, we restrict to $n > 1$ to avoid overcounting the one domino in the one tiling of $R_1$. We recycle the function $\perim$ from Definition~\ref{defn:perimeter counting functions} to count perimeter dominoes in a tiling $D \in \mathcal{D}(R_n)$.

\begin{proposition}\label{prop:2xn domino bounds}
For $n \ge 2$,
$$\min\{\perim(D) : D \in \mathcal{D}(R_n)\} = 2 \hspace{.25in} \text{and} \hspace{.25in} \max\{\perim(D) : D \in \mathcal{D}(R_n)\} = n.$$
\end{proposition}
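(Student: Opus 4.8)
The plan is to first pin down the structure of domino tilings of $R_n = R_{2,n}$ and then classify which dominoes can be strong perimeter dominoes. Because $R_n$ has height $2$, every horizontal domino lies entirely in the top row or entirely in the bottom row, so its length-$2$ edge automatically coincides with a stretch of the top or bottom boundary; hence \emph{every} horizontal domino is a strong perimeter domino. A vertical domino, by contrast, occupies a single column and meets the boundary along a length-$2$ edge only when that column is the leftmost or rightmost one, so a vertical domino is a perimeter domino exactly when it sits in column $1$ or column $n$. (An interior vertical domino touches the top and bottom boundary only in length-$1$ segments, making it at most a \emph{weak} perimeter domino, correctly excluded from $\perim$.) These observations reduce the problem to counting horizontal dominoes together with the at most two extreme vertical dominoes.

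For the upper bound, I would note that any tiling of $R_n$ uses exactly $n$ dominoes, since the area is $2n$; this gives $\perim(D) \le n$ immediately. To show the bound is attained I would exhibit a tiling in which every domino is a perimeter domino. When $n$ is even this is the all-horizontal tiling, consisting of $n$ horizontal dominoes, all perimeter. When $n$ is odd a single vertical domino is forced, so I would place it in column $1$ and fill the remaining even-width strip $R_{2,n-1}$ with horizontal dominoes; the lone vertical domino is then a perimeter domino (leftmost column) and all others are horizontal, again giving $n$ perimeter dominoes. Hence $\max = n$.

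For the lower bound, I would run a corner argument. Consider the opposite corner cells $(1,1)$ and $(2,n)$. Whatever domino covers $(1,1)$ must be a perimeter domino: if it is vertical it uses the left boundary, and if it is horizontal it uses the top boundary. The same reasoning forces the domino covering $(2,n)$ to use the right or bottom boundary. For $n \ge 2$ these two cells lie in different rows and cannot be covered by a single domino, so the two dominoes are distinct, giving $\perim(D) \ge 2$ for every $D$. The all-vertical tiling realizes this value: only its leftmost and rightmost columns yield perimeter dominoes, so it has exactly two. Thus $\min = 2$.

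Most of this is routine bookkeeping; the two places that need care are the achievability of the maximum when $n$ is odd — where an all-horizontal tiling is impossible and the fix is to absorb the forced vertical domino at an end — and the choice of corners in the lower bound. Using the two \emph{top} corners $(1,1)$ and $(1,n)$ would fail at $n=2$, where a single horizontal domino covers both; choosing the diagonally opposite corners $(1,1)$ and $(2,n)$ avoids this degeneracy for all $n \ge 2$.
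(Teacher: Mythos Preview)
Your proof is correct and follows essentially the same approach as the paper. The paper's lower-bound argument is phrased slightly differently---it observes that each of the four corners forces a perimeter domino and that a single domino can cover at most two corners (namely the pair along a length-$2$ side)---whereas you select two diagonally opposite corners directly; but both are the same corner-forcing idea, and the constructions for the extremal tilings are identical.
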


\begin{proof}
In any of the four corners of the region $R_n$, either vertical or horizontal placement of the domino occupying that corner will produce a perimeter tile. The two corners along an edge of length two may be covered by the same (perimeter) domino, and there is no way to cover more than two corners by a single domino. Therefore there must be at least two perimeter dominoes in any tiling. Orienting all dominoes to be parallel to the length-$2$ edge of $R_n$ obtains this minimal value.

If, in contrast, all dominoes are oriented perpendicular to the length-$2$ edge of $R_n$, with the first domino having the opposite orientation if $n$ is odd, then all $n$ dominoes will be perimeter tiles.
\end{proof}

Examples of the tilings described in the proof of Proposition~\ref{prop:2xn domino bounds} are shown in Figure~\ref{fig:domino bounds}.

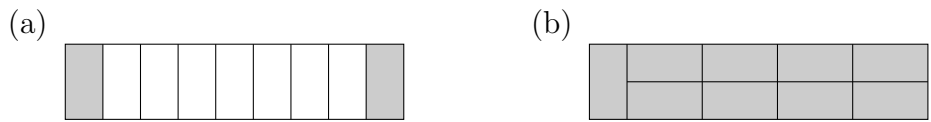
\begin{figure}[htbp]
\begin{tikzpicture}[scale=.5]
\fill[black!20] (0,0) rectangle (1,2);
\fill[black!20] (8,0) rectangle (9,2);
\draw (0,0) rectangle (9,2);
\foreach \x in {1,...,8} {\draw (\x,0) -- (\x,2);}
\draw (-1,2.5) node {(a)};
\end{tikzpicture}
\hspace{.5in}
\begin{tikzpicture}[scale=.5]
\fill[black!20] (0,0) rectangle (9,2);
\draw (0,0) rectangle (9,2);
\draw (1,0) -- (1,2);
\draw (1,1) -- (9,1);
\foreach \x in {3,5,7} {\draw (\x,0) -- (\x,2);}
\draw (-1,2.5) node {(b)};
\end{tikzpicture}
\caption{Domino tilings of $R_9$ having (a) the least and (b) the greatest number of perimeter dominoes. Perimeter dominoes are shaded.}
\label{fig:domino bounds}
\end{figure}

In analogy to the work of Section~\ref{sec:long element}, we now study the total number of perimeter dominoes appearing among all domino tilings of $R_n$.

\begin{definition}
For $n > 2$, let $P_n$ be the number of perimeter dominoes appearing among all elements of $\mathcal{D}(R_n)$.
\end{definition}

\begin{proposition}\label{prop:domino total}
For $n > 2$,
$$P_{n+1} = P_n + P_{n-1} + 2f(n),$$
where $f(n)$ is the $n$th Fibonacci number, and $P_2 = 4$ and $P_3 = 8$. In closed form,
$$P_n = \frac{4nf(n-1) + (2n+8)f(n)}{5},$$
and the generating function is
$$\sum\limits_{n \ge 2} P_nx^n = \frac{4x^2 - 4x^4 - 2x^5}{(1-x-x^2)^2}.$$
\end{proposition}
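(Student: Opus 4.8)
The plan is to reduce everything to the special geometry of $R_n$ and then run the standard $2\times n$ block recursion, carefully tracking how perimeter status transfers under embedding. The key preliminary observation is a structural characterization of perimeter dominoes. Because $R_n$ has height $2$, a horizontal domino always occupies one full row, so its length-$2$ top or bottom edge lies on the boundary; hence \emph{every} horizontal domino is a perimeter domino. A vertical domino, on the other hand, meets the boundary along a length-$2$ side only when it sits in the first or last column. Thus for any $D \in \mathcal{D}(R_n)$, $\perim(D)$ equals the number of horizontal dominoes of $D$ plus the number of vertical dominoes of $D$ lying in columns $1$ and $n$. This is what makes the count tractable.

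To prove the recursion, I would decompose a tiling of $R_{n+1}$ by its rightmost block: either column $n+1$ is a single vertical domino, leaving a tiling of $R_n$, or columns $n$ and $n+1$ form a stacked pair of horizontal dominoes, leaving a tiling of $R_{n-1}$. These give bijections with $\mathcal{D}(R_n)$ and $\mathcal{D}(R_{n-1})$, of sizes $f(n+1)$ and $f(n)$. Summing $\perim$ over each family and applying the characterization above, the only subtlety is a correction term: a vertical domino in the old last column loses its perimeter status once a new column is appended on the right, and an analogous shift occurs in the horizontal case. Using $\lvert \mathcal{D}(R_m)\rvert = f(m+1)$ together with $f(n+1)-f(n) = f(n-1)$, these correction terms telescope, so the two families contribute $P_n + f(n-1)$ and $P_{n-1} + 2f(n) - f(n-1)$ respectively, summing to $P_{n+1} = P_n + P_{n-1} + 2f(n)$. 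The base values $P_2 = 4$ and $P_3 = 8$ follow from directly enumerating the two tilings of $R_2$ and the three tilings of $R_3$.

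Given the recursion and base cases, the remaining two claims are routine. For the closed form I would set $Q_n := (4nf(n-1) + (2n+8)f(n))/5$ and verify that it satisfies the very same recursion, by rewriting both $5Q_{n+1}$ and $5(Q_n + Q_{n-1} + 2f(n))$ in terms of $f(n)$ and $f(n-1)$ using $f(n+1)=f(n)+f(n-1)$ and $f(n-2)=f(n)-f(n-1)$; both reduce to $(6n+14)f(n) + (2n+10)f(n-1)$. Since $Q_2 = P_2$ and $Q_3 = P_3$, induction gives $P_n = Q_n$. For the generating function, I would multiply the recursion by $x^{n+1}$, sum over $n \ge 3$, and solve for $G(x) = \sum_{n \ge 2} P_n x^n$, using $\sum_{n\ge 0} f(n)x^n = x/(1-x-x^2)$; clearing denominators yields numerator $4x^2 - 4x^4 - 2x^5$ over $(1-x-x^2)^2$.

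The genuinely delicate step is the correction-term bookkeeping in the recursion, since one must not forget that appending a column silently demotes an interior vertical domino that was previously on the boundary; the cancellation via $f(n+1)-f(n)=f(n-1)$ is what produces the clean inhomogeneous term $2f(n)$. As a cross-check (and an alternative route to the generating function) I would note that the characterization above gives $P_n = H_n + 2f(n)$, where $H_n$ is the total number of horizontal dominoes over all tilings and the $2f(n)$ counts first- and last-column vertical dominoes. Since horizontal dominoes occur only in vertically aligned pairs, and a pair in columns $(i,i+1)$ appears in $f(i)f(n-i)$ tilings, one has $H_n = 2\sum_{i=1}^{n-1} f(i)f(n-i)$, the Fibonacci self-convolution, whose generating function is $2\bigl(x/(1-x-x^2)\bigr)^2$; adding $2\sum_{n\ge 2} f(n)x^n$ reproduces the same expression for $G(x)$, confirming the answer independently.
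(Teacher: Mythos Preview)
Your argument is correct and close in spirit to the paper's: both proofs decompose tilings of $R_{n+1}$ by what happens in an extremal column (the paper peels off the leftmost column, you peel off the rightmost) and then track how the perimeter count changes under the embedding. The bookkeeping differs. The paper introduces an auxiliary sequence $a_n$ (perimeter dominoes that are not a leftmost vertical) and derives $P_{n+1}=a_{n+1}+f(n+1)$ before collapsing to the recursion; you instead state up front the structural characterization ``perimeter domino $=$ horizontal domino or end-column vertical,'' which lets you compute the correction terms directly and see the cancellation $f(n+1)-f(n)=f(n-1)$ without an auxiliary sequence. Your route is arguably cleaner, and your alternative derivation via $P_n = H_n + 2f(n)$ with $H_n = 2\sum_i f(i)f(n-i)$ the Fibonacci self-convolution is a genuinely independent check that the paper does not give (it only alludes to ``iterating the recursion to find a Fibonacci convolution'' for the closed form).
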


\begin{proof}
To compute $P_2$ and $P_3$, we look at the two elements of $\mathcal{D}(P_2)$ and the three elements of $\mathcal{D}(P_3)$, shown in Figure~\ref{fig:dominoes in small rectangles}. 
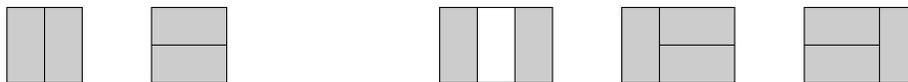
\begin{figure}[htbp]
\begin{tikzpicture}[scale=.5]
\fill[black!20] (0,0) rectangle (2,2);
\draw (0,0) rectangle (2,2);
\draw (1,0) -- (1,2);
\end{tikzpicture}
\hspace{.25in}
\begin{tikzpicture}[scale=.5]
\fill[black!20] (0,0) rectangle (2,2);
\draw (0,0) rectangle (2,2);
\draw (0,1) -- (2,1);
\end{tikzpicture}
\hspace{1in}
\begin{tikzpicture}[scale=.5]
\fill[black!20] (0,0) rectangle (1,2);
\fill[black!20] (2,0) rectangle (3,2);
\draw (0,0) rectangle (3,2);
\draw (1,0) -- (1,2);
\draw (2,0) -- (2,2);
\end{tikzpicture}
\hspace{.25in}
\begin{tikzpicture}[scale=.5]
\fill[black!20] (0,0) rectangle (3,2);
\draw (0,0) rectangle (3,2);
\draw (1,0) -- (1,2);
\draw (1,1) -- (3,1);
\end{tikzpicture}
\hspace{.25in}
\begin{tikzpicture}[scale=.5]
\fill[black!20] (0,0) rectangle (3,2);
\draw (0,0) rectangle (3,2);
\draw (2,0) -- (2,2);
\draw (0,1) -- (2,1);
\end{tikzpicture}
\caption{All domino tilings of $R_2$ and $R_3$, with shaded perimeter dominoes.}
\label{fig:dominoes in small rectangles}
\end{figure}
For clarity, we will draw all rectangles $R_n$ so that the side of length $n$ is horizontal. It will help us to consider the sequence $\{a_n\}$, which we define to count perimeter dominoes that are not a leftmost vertical domino, appearing among all elements of $\mathcal{D}(R_n)$. Thus, for example, we see in the figures above that $a_2 = 3$ and $a_3 = 6$.

There are two possible ways to tile the leftmost column of $R_{n+1}$: by a vertical domino or by two horizontal dominoes. In each case, the placed dominoes are perimeter tiles. From these options, we see that
\begin{align*}
P_{n+1} &= \Big(1\cdot|\mathcal{D}(R_n)| + a_n\Big) + \Big(2\cdot|\mathcal{D}(R_{n-1})| + a_{n-1}\Big)\\
&= 1\cdot f(n+1) + a_n + 2\cdot f(n) + a_{n-1}.
\end{align*}
By the same breakdown of cases, we have
$$a_{n+1} = 0 \cdot f(n+1) + a_n + 2\cdot f(n) + a_{n-1}.$$
Therefore $P_{n+1} = a_{n+1} + f(n+1)$, and so
\begin{align*}
P_n + P_{n-1} &= a_n + f(n) + a_{n-1} + f(n-1)\\
&= a_{n+1} - 2f(n) + f(n+1)\\
&= P_{n+1} - 2f(n).
\end{align*}
The closed form can be extracted by iterating the recursion to find a Fibonacci convolution, and the associated generating function can be found by standard methods, such as those outlined in \cite{wilf}.
\end{proof}

The values $P_n$, along with $|\mathcal{D}(R_n)|$, are presented in Table~\ref{table:domino data} for $n \in [2,9]$. The sequence $\{P_n\}$ is entry A320947 of \cite{oeis}.

\begin{table}[htbp]
$$\begin{array}{r||c|c}
\raisebox{0in}[.2in][.1in]{}n & \ \hspace{.25in}P_n\hspace{.25in} \ & |\mathcal{D}(R_n)| = f(n+1)\\
\hline
\hline
\raisebox{0in}[.2in][.1in]{} 2 & 4 & 2\\
\hline
\raisebox{0in}[.2in][.1in]{} 3 & 8 & 3\\
\hline
\raisebox{0in}[.2in][.1in]{} 4 & 16 & 5\\
\hline
\raisebox{0in}[.2in][.1in]{} 5 & 30 & 8\\
\hline
\raisebox{0in}[.2in][.1in]{} 6 & 56 & 13\\
\hline
\raisebox{0in}[.2in][.1in]{} 7 & 102 & 21\\
\hline
\raisebox{0in}[.2in][.1in]{} 8 & 184 & 34\\
\hline
\raisebox{0in}[.2in][.1in]{} 9 & 328 & 55\\
\end{array}$$
\caption{The total number of perimeter dominoes, along with the number of domino tilings, of a $2\times n$ rectangle, for $n \in [2,9]$.}
\label{table:domino data}
\end{table}

One can, similarly, refine some of this data by calculating the number of domino tilings of $R_n$ that have $k$ perimeter dominoes:
\begin{align*}
\#\{D \in \mathcal{D}(R_n) : \perim(D) = k\} &= \binom{n-\lfloor \frac{k+1}{2}\rfloor -1}{\lceil \frac{k-3}{2} \rceil} + \binom{n-\lceil \frac{k+1}{2}\rceil -1}{\lfloor \frac{k-3}{2} \rfloor}\\
&=
\begin{cases}
\displaystyle{2\binom{n-m-2}{m-1}} & \text{ if } k = 2m+1, \text{ and}\\
\displaystyle{\binom{n-m-1}{m-1} + \binom{n-m-2}{m-2}} & \text{ if } k = 2m.
\end{cases}
\end{align*}
This data is sequence A073044 of \cite{oeis}.

Given the motivation of this work, it is natural to look at isoperimetric properties in the setting of $\mathcal{D}(R_{m,n})$. 

\begin{lemma}\label{lem:plump rectangles in between}
If $m,n > 2$, then
$$\min\{\perim(D) : D \in \mathcal{D}(R_{m,n})\} \ge 4$$
and
$$\max\{\perim(D) : D \in 
\mathcal{D}(R_{m,n})\} \le m+n-2 < \frac{mn}{2}.$$
\end{lemma}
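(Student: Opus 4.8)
The plan is to treat the two bounds separately, since each reduces to an accounting of how dominoes meet the boundary of $R_{m,n}$. Throughout I would lean on the observation recorded in Figure~\ref{fig:corner domino}: whichever domino covers a convex corner cell is automatically a strong perimeter domino, because in either orientation one of its length-$2$ edges runs along the boundary.

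For the lower bound, I would focus on the four corner cells. Each is covered by some domino, and by the corner observation each such domino is a perimeter domino (this uses only that the incident sides have length $\ge 2$). It then remains to check that these four corner dominoes are distinct. A single $1\times 2$ domino covers two cells sharing a row or a column, so it could cover two corner cells only if two corners were one cell apart; but adjacent corners of $R_{m,n}$ sit in columns (or rows) $1$ and $n$ (or $1$ and $m$), which differ by at least $2$ once $m,n>2$, and opposite corners are farther still. Hence the four corner dominoes are pairwise distinct perimeter dominoes, giving $\perim(D)\ge 4$.

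For the upper bound I would set up an injection: to each perimeter domino assign the pair of collinear boundary unit-edges forming its length-$2$ boundary edge. Since $m,n>2$, no domino can have both of its long edges on the boundary, so a perimeter domino has exactly one such boundary edge and the assignment is unambiguous; and since each boundary unit-edge lies on a unique cell, hence in a unique domino, these pairs are pairwise disjoint. A crude count then bounds the number of perimeter dominoes by $\tfrac12\cdot 2(m+n)=m+n$, which is off by exactly $2$. The improvement --- and the main obstacle --- comes from the corners: at each corner the covering domino lays its length-$2$ boundary edge along only one of the two incident sides, so the single boundary unit-edge on the other side is a short edge of that domino and can never belong to any perimeter pair. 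This produces four never-assigned boundary unit-edges, and the delicate point is checking that they are four \emph{distinct} edges. This uses $m,n\ge 3$: two such orphan edges either lie on different sides (and so are distinct) or lie at opposite ends of a common side (distinct because each side has length at least $3$, so its first and last edges differ). Discarding these four edges leaves at most $2(m+n)-4$ available for pairs, so at most $m+n-2$ perimeter dominoes.

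Finally, the strict inequality $m+n-2<\tfrac{mn}{2}$ is a routine computation: clearing denominators it is equivalent to $(m-2)(n-2)>0$, which holds since $m,n>2$.
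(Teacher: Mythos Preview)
Your argument is correct. The lower bound is exactly the paper's: the four corner cells are each covered by a perimeter domino, and because $m,n>2$ no two corners can share a domino, giving four distinct perimeter dominoes.

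For the upper bound you take a genuinely different route. The paper counts boundary \emph{cells} rather than boundary \emph{edges}: there are $2m+2n-4$ unit cells along the boundary of $R_{m,n}$, and any perimeter domino---having a length-$2$ side on the boundary---necessarily occupies two of these cells, so there can be at most $(2m+2n-4)/2 = m+n-2$ of them. This avoids your corner-orphan correction entirely, since the ``$-4$'' is already baked into the cell count (the four corner cells are shared by two sides). Your edge-counting argument is perfectly valid and the orphan-edge analysis is carefully done, but the cell count is shorter and sidesteps the need to verify that the four short corner edges are distinct. Your explicit verification of $m+n-2 < mn/2$ via $(m-2)(n-2)>0$ is a nice addition; the paper simply asserts the inequality.
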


\begin{proof}
Because $m$ and $n$ are both greater than $2$, no single domino can cover two corners of the rectangle $R_{m,n}$. Therefore $\min\{\perim(D) : D \in \mathcal{D}(R_{m,n})\} \ge 4$.

As in the proof of Proposition~\ref{prop:at most n perimeter rhombi}, we note that at most $(2m+2n-4)/2 = m+n-2$ perimeter dominoes can be used to cover the $2m+2n-4$ squares along the boundary of $R_{m,n}$. Because $m,n>2$, this value is strictly less than $mn/2$.
\end{proof}

Note the implications of Lemma~\ref{lem:plump rectangles in between} for isoperimetric-type questions about rectangles.

\begin{corollary}
Among all rectangles of a fixed area, the dimensions that can achieve both the least and the greatest number of perimeter dominoes is the $2\times n$ rectangle.
\end{corollary}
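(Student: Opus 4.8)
The plan is to fix an even area $A$ and compare the possible perimeter-domino counts across the three qualitatively different shapes a rectangle of area $A$ can take: a $1\times A$ strip, a $2\times(A/2)$ rectangle, and a \emph{plump} rectangle $R_{m,n}$ with $m,n>2$ and $mn=A$. First I would dispose of the strip. Since $R_{1,A}$ has height one, every domino must lie horizontally, so its tiling is unique and each of its $A/2$ dominoes shares a length-$2$ edge with both the top and the bottom boundary. Hence $R_{1,A}$ has exactly $A/2$ perimeter dominoes in its only tiling, so its minimum and maximum perimeter counts both equal $A/2$.

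Next I would simply invoke the data already available for the other two shapes. Proposition~\ref{prop:2xn domino bounds} gives, for $R_{2,A/2}=R_{A/2}$, a minimum of $2$ and a maximum of $A/2$. Lemma~\ref{lem:plump rectangles in between} gives, for every plump $R_{m,n}$, a minimum of at least $4$ and a maximum of at most $m+n-2$, which is strictly less than $mn/2=A/2$ since $mn/2-(m+n-2)=(m-2)(n-2)/2>0$.

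With these three computations in hand, the conclusion follows by comparison. For the least count: among all shapes and all tilings of area $A$ (with $A>4$), the smallest value is $2$, attained only by the $2\times(A/2)$ rectangle, because the strip yields $A/2\ge 3$ and every plump rectangle yields at least $4$. For the greatest count: the largest value is $A/2$, which the $2\times(A/2)$ rectangle attains while every plump rectangle falls strictly short of it. Thus the $2\times n$ rectangle is the unique shape whose tilings realize both the overall minimum and the overall maximum number of perimeter dominoes.

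The main obstacle is the $1\times A$ strip, which ties the $2\times n$ rectangle for the \emph{maximum} (both give $A/2$) and so must be separated from it. The separating observation is that the strip's tiling is forced and therefore cannot also realize a small perimeter count, whereas the flexibility of $R_{2,n}$ lets its tilings range all the way down to $2$; this is exactly what lets $2\times n$, and only $2\times n$, achieve both extremes. I would also note the mild hypothesis $A>4$ needed to keep the strip's count $A/2$ strictly above $2$, the remaining case $A=4$ being a degenerate tie that I would mention but not belabor.
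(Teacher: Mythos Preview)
Your proof is correct and follows the paper's approach: the corollary is stated without proof in the paper, as an immediate consequence of Proposition~\ref{prop:2xn domino bounds} and Lemma~\ref{lem:plump rectangles in between}, and you invoke exactly those two results. Your explicit treatment of the $1\times A$ strip (which also attains the maximum $A/2$ but is disqualified because it cannot attain a small perimeter) and your remark about the degenerate case $A=4$ are careful additions that the paper does not spell out.
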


We close this section by looking at perimeter dominoes in non-rectangular regions. We first observe that different types of corners in a region can force different kinds of perimeter behavior.

\begin{definition}
A \emph{convex} corner is one in which the interior angle is $90^{\circ}$. A \emph{non-convex} corner is one in which the interior angle is $270^{\circ}$. 
\end{definition}

Convex and non-convex corners are illustrated in Figure~\ref{fig:corners}, with filled circles indicating convex corners and open circles indicating non-convex corners. 
\begin{figure}[htbp]
\begin{tikzpicture}[scale=.5]
\draw[fill=black!20] (0,0) -- (2,0) -- (2,2) -- (8,2) -- (8,4) -- (4,4) -- (4,6) -- (6,6) -- (6,10) -- (0,10) -- (0,8) -- (-2,8) -- (-2,4) -- (0,4) -- (0,0);
\draw[fill=white] (0,5) -- (3,5) -- (3,7) -- (1,7) -- (1,6) -- (0,6) -- (0,5);
\foreach \x in {(0,0), (2,0), (8,2), (8,4), (6,6), (6,10), (0,10), (-2,8), (-2,4)} {\fill[black] \x circle (4pt);}
\foreach \x in {(1,6)} {\fill[black] \x circle (4pt);}
\foreach \x in {(0,5), (3,5), (3,7), (1,7), (0,6), (0,5)} {\filldraw[fill=white] \x circle (4pt);}
\foreach \x in {(2,2), (4,4), (4,6), (0,8), (0,4)} {\filldraw[fill=white] \x circle (4pt);}
\path[->] (8,7) edge[bend right] (5,8);
\draw (8,7) node[right] {region};
\draw[white] (-2,7) node[left] {region};
\end{tikzpicture}
\caption{A rectilinear region with convex corners marked by filled circles and non-convex corners marked by open circles.}
\label{fig:corners}
\end{figure}
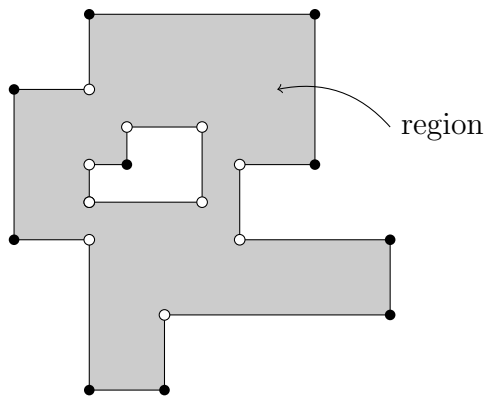
While the lines in Figure~\ref{fig:corners} represent the (disconnected) boundary of the region, tile behavior at the two types of corners has different impacts on the collection of perimeter tiles.

\begin{lemma}\label{lem:non-convex doesn't make perimeter}
Suppose edges of length at least two meet at the corner of a region. If that corner is convex, then it must be incident to a perimeter tile. Non-convex corners, in general, need not be incident to perimeter tiles.
\end{lemma}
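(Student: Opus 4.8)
The plan is to handle the two assertions separately: a direct argument for the convex case, and an explicit counterexample for the non-convex case.

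For the convex corner, I would place the corner at the origin with the interior of the region occupying the first quadrant locally, so that the two incident boundary edges run along the positive $x$- and $y$-axes, each of length at least two. The key observation is that the unit square $[0,1]\times[0,1]$ at the corner has only two of its four neighbors inside the region, namely the square to its right and the square above it, since the squares below and to the left lie outside. Hence the domino covering the corner square is forced to be either the horizontal domino $[0,2]\times[0,1]$ or the vertical domino $[0,1]\times[0,2]$, and both of these squares are present precisely because the incident edges have length at least two. In the horizontal case the bottom edge $[0,2]\times\{0\}$ lies along the horizontal boundary edge, and in the vertical case the left edge $\{0\}\times[0,2]$ lies along the vertical boundary edge. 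Either way the corner domino shares a contiguous length-$2$ segment with the boundary, so by Definition~\ref{defn:perimeter domino} it is a (strong) perimeter domino incident to the corner. This is essentially the phenomenon already illustrated in Figure~\ref{fig:corner domino}, with the hypothesis on edge lengths guaranteeing that the full length-$2$ side sits on a single boundary edge regardless of the domino's orientation.

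For the non-convex corner it suffices to produce one region and one tiling in which the corner has no incident perimeter domino. I would take the L-shaped region obtained from the $4\times 4$ square by deleting the $2\times 2$ square in its lower-right corner; this region is tileable by dominoes and has a non-convex corner at the point $(2,2)$, incident to two boundary edges of length two. The three unit squares meeting this corner are the ones immediately above-right, above-left, and below-left of it. I would then exhibit a tiling in which the square above-right of the corner is covered by a \emph{vertical} domino and the square below-left by a \emph{horizontal} domino, with the third incident square covered by a domino pointing away from the corner. One checks that each of these three dominoes meets the boundary only in isolated length-$1$ segments (or not at all), so none is a strong perimeter domino, and the remaining squares of the region are completed into dominoes in any compatible way.

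The main obstacle, such as it is, lies entirely in the non-convex case: one must choose the region and the local orientations so that every domino touching the corner avoids a contiguous length-$2$ overlap with the boundary, and then confirm that these forced local choices extend to a genuine domino tiling of the whole region. The convex case, by contrast, is immediate once one notes that the corner domino has only two possible orientations and both place a length-$2$ side flush against a boundary edge. I expect the cleanest write-up to accompany the non-convex construction with a figure, as elsewhere in this section.
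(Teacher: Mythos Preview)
Your proposal is correct and follows the same two-part structure as the paper: a direct forcing argument for the convex case and an explicit example for the non-convex case. The paper itself does not write out a proof at all---it simply states that the lemma ``is illustrated in Figure~\ref{fig:tiles at corners},'' showing local domino placements at each type of corner without specifying a complete region. Your version is therefore more detailed: you supply a full tileable region (the L-shaped $4\times 4$ minus a $2\times 2$ corner) and verify that the tiling extends globally, whereas the paper's figure only depicts the local configuration near the corner.
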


This lemma is illustrated in Figure~\ref{fig:tiles at corners}, where, again, filled circles indicate convex corners and open circles indicate non-convex corners.

\begin{figure}[htbp]
\begin{tikzpicture}[scale=.5]
\draw (-.5,0) -- (2,0) -- (2,2.5);
\draw[dashed] (0,0) -- (0,1) -- (2,1);
\draw (3.5,0) -- (6,0) -- (6,2.5);
\draw[dashed] (5,0) -- (5,2) -- (6,2);
\foreach \x in {(2,0),(6,0)} {\fill[black] \x circle (4pt);}
\draw (10,-1.5) -- (10,1) -- (12.5,1);
\draw[dashed] (11,1) -- (11,3) -- (10,3) -- (10,1) -- (8,1) -- (8,0) -- (10,0);
\filldraw[fill=white] (10,1) circle (4pt);
\end{tikzpicture}
\caption{How tiles can behave around convex and non-convex corners.}
\label{fig:tiles at corners}
\end{figure}
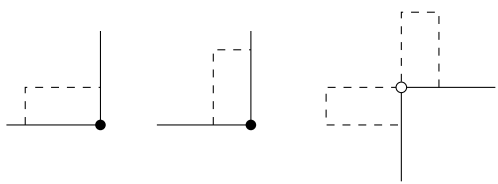

Lemma~\ref{lem:non-convex doesn't make perimeter} suggests that a holey square (see \cite{tenner holey}) need not have more perimeter tiles than a non-holey square. Compare Lemma~\ref{lem:plump rectangles in between} to Figure~\ref{fig:holey and non-holey}, where both regions have minimally many (four) perimeter tiles.

\begin{figure}[htbp]
\begin{tikzpicture}[scale=.5]
\foreach \x in {(0,0), (0,5), (4,0), (4,5)} {\fill[black!20] \x rectangle ++(2,1);}
\draw[thick] (0,0) rectangle (6,6);
\foreach \x in {2,3,4} {\draw (\x,0) -- (\x,6);}
\foreach \x in {1,2,3,4,5} {\draw (0,\x) -- (2,\x); \draw (4,\x) -- (6,\x);}
\foreach \x in {2,4} {\draw (2,\x) -- (4,\x);}
\end{tikzpicture}
\hspace{.5in}
\begin{tikzpicture}[scale=.5]
\foreach \x in {(0,0), (0,5), (4,0), (4,5)} {\fill[black!20] \x rectangle ++(2,1);}
\draw[thick] (0,0) rectangle (6,6);
\draw[thick] (2,2) rectangle (4,4);
\foreach \x in {2,4} {\draw (\x,0) -- (\x,6);}
\draw (3,0) -- (3,2);
\draw (3,4) -- (3,6);
\foreach \x in {1,2,3,4,5} {\draw (0,\x) -- (2,\x); \draw (4,\x) -- (6,\x);}
\foreach \x in {2,4} {\draw (2,\x) -- (4,\x);}
\end{tikzpicture}
\caption{A domino tiling of the $6\times 6$ square, and a domino tiling of the $6\times 6$ holey square missing a $2\times 2$ central region, both of which have four (shaded) perimeter tiles.}\label{fig:holey and non-holey}
\end{figure}
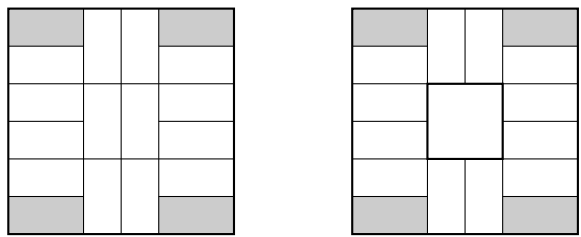

As we saw in Lemma~\ref{lem:plump rectangles in between}, the minimal tile perimeter of a rectangle is four. This poses an obvious question: are there nontrivial (necessarily non-rectangular) regions with tile perimeter less than four? Indeed, there are! Then, is there any region that can be tiled with no perimeter tiles? In fact, there is not!

We show this by illustrating, much like in Lemma~\ref{lem:non-convex doesn't make perimeter}, that certain formations will force the existence of a perimeter tile. We use the notation X$^*$ to indicate that the string X appears zero or more times.

\begin{definition}
Let $\sf{N}$, $\sf{S}$, $\sf{E}$, and $\sf{W}$ refer to the cardinal directions. Consider a region with a segment of boundary that can be described, up to rotation, by the string
$$\sf{WN}(\sf{N}^*\sf{E}^*)^*\sf{ES},$$
oriented so that the first two letters form a convex corner of the region. That segment of the boundary is a \emph{cloverleaf} of the region.
\end{definition}

A cloverleaf described by $\sf{WNNNENNEENEEEES}$ is illustrated in Figure~\ref{fig:cloverleafs}.

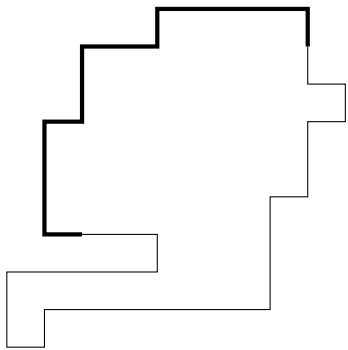
\begin{figure}[htbp]
\begin{tikzpicture}[scale=.5]
\draw[ultra thick] (0,0) -- ++(-1,0) --++(0,3) --++(1,0) --++(0,2) --++(2,0) --++(0,1) --++(4,0) --++(0,-1);
\draw (0,0) --++(2,0) --++(0,-1) --++(-4,0) --++(0,-2) --++(1,0) --++(0,1) --++(6,0) --++(0,3) --++(1,0) --++(0,2) --++(1,0) --++(0,1) --++(-1,0) --++(0,1);
\end{tikzpicture}
\caption{A region with four cloverleafs, one of which is marked.}\label{fig:cloverleafs} 
\end{figure}

The following claims are not hard to prove, and we leave them to the reader.

\begin{lemma}\label{lem:cloverleafs}\
\begin{enumerate}\renewcommand{\labelenumi}{(\alph{enumi})}
\item Every finite region has at least one cloverleaf of each of the four orientations.
\item In any tiling, every cloverleaf has at least one perimeter tile.
\end{enumerate}
\end{lemma}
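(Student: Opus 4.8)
The plan is to read everything off the boundary of the region, oriented clockwise so the region lies to its right, and recorded as a cyclic word in the unit steps $\textsf{N},\textsf{E},\textsf{S},\textsf{W}$. Unwinding the definition, a cloverleaf of the given (``north'') orientation is exactly a maximal block of consecutive $\textsf{N}/\textsf{E}$ steps — an ascending staircase — that is immediately preceded by a $\textsf{W}$ step and immediately followed by an $\textsf{S}$ step. (A maximal $\textsf{N}/\textsf{E}$ block preceded by $\textsf{W}$ must begin with $\textsf{N}$, since $\textsf{W}$ then $\textsf{E}$ would retrace an edge, and followed by $\textsf{S}$ must end with $\textsf{E}$; so the bracketing steps automatically produce the convex corners $\textsf{WN}$ and $\textsf{ES}$ demanded by the definition.) Call such a block a \emph{good run}. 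Because rotating the region by $90^\circ$ permutes the four orientations, it suffices to treat the north orientation, so (a) reduces to ``every finite region has a good run on its outer boundary'' and (b) reduces to ``every good run meets a perimeter domino.''

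I would dispatch (b) first, as it is the cleaner half. A good run begins with the convex corner where its $\textsf{W}$ and $\textsf{N}$ steps meet. The unit cell $c$ sitting inside this corner belongs to the region, and two of its sides — the $\textsf{N}$ step running up its left edge and the $\textsf{W}$ step running across its bottom edge — lie on the boundary and meet at the corner. In any domino tiling, the cell $c$ must be covered by a domino extending to its east or north (its west and south neighbors are outside the region), so that domino contains both of those boundary sides of $c$ and hence shares with the boundary a contiguous path of discrete length two, namely the bent ``L'' at the corner. This is precisely the corner phenomenon described in the paragraph preceding Definition~\ref{defn:perimeter domino} and illustrated in Figure~\ref{fig:corner domino}; the domino is therefore a (strong) perimeter domino regardless of its orientation. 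Thus every cloverleaf contains a perimeter domino, proving (b), and recovering Lemma~\ref{lem:non-convex doesn't make perimeter} even when the incident edges have length one.

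For (a) the task is to exhibit one good run. The difficulty is that a maximal ascending staircase can fail to be good in two ways: it can \emph{emerge from a valley} (its bottom-left corner is the nonconvex $\textsf{SE}$, so it is not preceded by $\textsf{W}$), or it can \emph{overhang} (its top-right corner is the nonconvex $\textsf{NW}$, so it is not followed by $\textsf{S}$); small examples show that neither the staircase through the highest horizontal edge nor the one through the point maximizing $x+y$ need be good. The approach I would take is an extremal argument. First observe that the bottom of any leftmost vertical edge is forced to be a $\textsf{WN}$ corner, so among all $\textsf{WN}$ corners there is a leftmost one, lying on the line $x=x_{\min}$; choose the highest such corner $p^\ast=(x_{\min},y^\ast)$ and consider the maximal ascending staircase $R$ issuing from it, which starts by running $\textsf{N}$ up $x=x_{\min}$. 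I would then argue that $R$ cannot overhang: an overhang ends with an $\textsf{N}$ step followed by a $\textsf{W}$ step, and following the subsequent westward steps back toward $x=x_{\min}$ either demands a step with $x<x_{\min}$ (impossible) or produces a fresh $\textsf{WN}$ corner on the line $x=x_{\min}$ strictly above $p^\ast$, contradicting the choice of $p^\ast$. With the overhang excluded, $R$ must terminate in an $\textsf{ES}$ corner and is a good run; the four-orientation statement then follows by the rotational symmetry noted above.

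The step I expect to be the main obstacle is exactly this last one, ruling out an overhang. The subtlety is that $R$ may take $\textsf{E}$ steps, moving to $x>x_{\min}$, before any attempt to turn back west, so the returning westward steps need not reach the line $x=x_{\min}$ at all — they may turn south first (a downward hook), in which case the clean contradiction above evaporates. I would close this gap in one of two ways: either by sharpening the extremal choice (for instance also minimizing the height at which $R$ first leaves the line $x=x_{\min}$, so that a hook is forced to expose lower structure that must itself carry a good run), or, more robustly, by passing to a minimal counterexample and performing local surgery at the globally highest edge to reduce the number of reflex corners, contradicting minimality. Either route is routine in spirit but needs the careful case analysis that the extremal statement alone does not supply.
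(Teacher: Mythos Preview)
Your argument for (b) rests on a misreading of Definition~5.2. You claim that the bent ``L'' of two unit edges at the $\textsf{WN}$ corner makes the covering domino a strong perimeter domino. It does not: Definition~5.2 requires that the domino share a \emph{length-$2$ edge}---one of its long sides, lying along a straight segment of the boundary---not merely a length-$2$ path. The paragraph you cite around Figure~\ref{fig:corner domino} makes exactly the opposite point: it observes that a corner domino always picks up an L-shaped length-$2$ contact \emph{regardless of its orientation}, and explains that the definition is chosen to capture the orientation-sensitive phenomenon instead. This is also why Lemma~\ref{lem:non-convex doesn't make perimeter} carries the hypothesis ``edges of length at least two'': with that hypothesis, either orientation of the corner domino lays a long side flush against one of the two incident boundary edges; without it, neither orientation need do so. Your closing remark that you ``recover Lemma~\ref{lem:non-convex doesn't make perimeter} even when the incident edges have length one'' is therefore exactly backwards.

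So (b) needs real work beyond the $\textsf{WN}$ corner. One workable route: look at the top shelf of the staircase (the cells just below the final $E$-run, bounded on the left by the final $N$-run and on the right by the $S$ step). The leftmost shelf cell has its north and west unit edges on the boundary; its domino extends east or south. If east, its north long side lies along two $E$ steps---a perimeter tile. If south, its west long side is on the boundary provided the final $N$-run has length at least two---again a perimeter tile. The only remaining case is that both the final $N$-run and the final $E$-run are single steps, which forces the shelf cell's domino downward and pushes the analysis one step lower on the staircase; iterate. (Along the way one also sees why certain short staircases such as $\textsf{WNENES}$ are simply untileable.) The paper does not spell this out---it leaves the lemma to the reader---but this is the shape of what ``not hard'' means here, and it is genuinely more than the one-line corner observation you gave.

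For (a), your reformulation in terms of maximal $\textsf{N}/\textsf{E}$ blocks is correct and useful, and you are honest that the extremal argument you sketch has a gap (the overhang may ``hook'' south before returning to $x=x_{\min}$). That gap is closable along the lines you indicate, but as written the proof is incomplete.
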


Although Lemma~\ref{lem:cloverleafs}(a) says that every region has at least four types of cloverleafs, some might overlap. However, in any sufficiently large region, two types that differ by a $180^{\circ}$ rotation cannot overlap enough to share any perimeter tiles. This, combined with Lemma~\ref{lem:cloverleafs}(b) produces the desired result. In fact, one can show, inductively, that there must be two \emph{parallel} perimeter tiles.

\begin{corollary}\label{cor:at least 2 perimeter tiles}
Suppose that $R$ is a region of area greater than $2$, which can be tiled by dominos. In any domino tiling of $R$, there are at least two perimeter tiles. Moreover, up to orientation of $R$, there is a tiling with a pair of vertical perimeter dominoes with one sharing its leftmost edge with the boundary of $R$ and the other of sharing its rightmost  edge with the boundary of $R$.
\end{corollary}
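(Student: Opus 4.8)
The plan is to prove the two assertions separately: the first (existence of at least two perimeter tiles in \emph{every} tiling) comes directly from the cloverleaf machinery of Lemma~\ref{lem:cloverleafs}, while the sharper ``moreover'' requires an explicit construction, which I would carry out by induction on the area.

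For the first assertion, I would invoke Lemma~\ref{lem:cloverleafs}(a) to produce a cloverleaf $C$ of some fixed orientation together with a cloverleaf $C'$ whose orientation differs from that of $C$ by $180^\circ$, and then Lemma~\ref{lem:cloverleafs}(b) to obtain a perimeter tile $t$ in $C$ and a perimeter tile $t'$ in $C'$. The crux is that $t$ and $t'$ may be taken distinct, and this is exactly where the hypothesis that the area exceeds $2$ enters. A perimeter domino shares a length-$2$ stretch of $\partial R$, which can only be one of its two long edges, so the essential case to rule out is that a single domino $d$ serves as perimeter tile for both $C$ and $C'$. Since the two cloverleafs open in opposite directions near any common tile, such a $d$ would meet $\partial R$ along its two \emph{opposite} long edges; then the cells flanking $d$ on both long sides are absent, making $d$ a connected component of area $2$ and contradicting the area hypothesis (for a connected region). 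A short verification of this case analysis yields $t \neq t'$, reproducing the informal claim made before the statement that opposite cloverleafs ``cannot overlap enough to share any perimeter tiles.''

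For the ``moreover'' I would first dispose of the degenerate case in which $R$, up to orientation, lies in a width-one strip: then $R$ is a $1 \times h$ bar with $h \ge 4$ even, and after rotating it to a vertical column its $h/2$ stacked vertical dominoes are all perimeter tiles, two of which---the bottom one (left edge on $\partial R$) and the top one (right edge on $\partial R$)---furnish the required parallel pair. Otherwise I would argue by induction on the area. After fixing an orientation, I would locate the globally leftmost vertical wall of $\partial R$ and let $c_0$ be the bottom cell immediately east of it; this $c_0$ is a convex corner (boundary on its south and west edges), hence incident to a perimeter tile by Lemma~\ref{lem:non-convex doesn't make perimeter}. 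When that leftmost wall has height at least $2$, the cell directly above $c_0$ also has its west edge on $\partial R$, and covering $c_0$ and that cell by one vertical domino gives a west-facing vertical perimeter tile whose entire left edge lies on $\partial R$; a symmetric reservation at the rightmost wall supplies the east-facing vertical perimeter tile, and the remainder is tiled by the inductive hypothesis.

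The main obstacle is precisely the completion step in this construction: I must guarantee that the vertical dominoes reserved at the left and right walls extend to a \emph{single} domino tiling of all of $R$, i.e.\ that deleting the reserved cells leaves a domino-tileable remainder and that the left- and right-side reservations do not interfere. The genuinely delicate subcase is when an extremal wall has height exactly one, so that $c_0$ is a single-cell ``nub'' forced to be covered horizontally; there the needed vertical perimeter tile must be found further along the boundary, and it is this situation, together with preserving tileability as the induction peels off cells, that requires the real work. The cloverleaf count of Lemma~\ref{lem:cloverleafs}(a) is what ultimately certifies that extremal walls of the orientations needed for both the left- and right-facing dominoes always exist.
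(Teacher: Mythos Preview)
Your treatment of the first assertion follows the paper's own sketch (the paragraph preceding the corollary): take a cloverleaf and its $180^\circ$ rotate, invoke Lemma~\ref{lem:cloverleafs}(b) on each, and argue the two resulting perimeter tiles are distinct. One caution: having both long edges of $d$ on $\partial R$ does not by itself make $d$ ``a connected component of area $2$'' --- $d$ could still meet other cells of $R$ through a short edge. What the condition actually forces is that $R$ has width $1$ at $d$, so a little more is needed to reach a contradiction. The paper is equally informal at this step, saying only that opposite cloverleafs ``cannot overlap enough to share any perimeter tiles.''

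For the ``moreover,'' the paper's hint differs from your plan: it asserts that one can show, inductively, that \emph{every} tiling already contains two parallel perimeter tiles (stronger than the existential statement in the corollary), whereas you attempt to \emph{construct} one particular tiling with the desired pair. The obstacle you flag --- that deleting your reserved vertical dominoes may destroy tileability --- is a real gap, and your inductive hypothesis gives no purchase on it, since that hypothesis speaks only of regions already assumed tileable, not of whether your specific remainder is. The paper's route, starting from an arbitrary given tiling $T$, avoids this issue entirely: any subregion one peels off is already tiled by the restriction of $T$, so tileability never needs to be re-established. That said, the paper supplies no further detail beyond the word ``inductively,'' so neither argument is complete as written; the main point is that your constructive induction carries an extra burden (preserving tileability) that the fix-a-tiling approach does not.
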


We bring this section to a close by describing all simply connected (that is, non-holey) regions with exactly two perimeter tiles. This relies heavily on Lemma~\ref{lem:non-convex doesn't make perimeter} and Corollary~\ref{cor:at least 2 perimeter tiles}.

\begin{corollary}\label{cor:minimal perimeter}
Suppose that $R$ is a simply connected region that can be tiled by dominos, and that $R$ has a tiling with exactly two perimeter tiles. Then, up to rotation, $R$ must be as depicted in Figure~\ref{fig:2 perimeter tiles}, and every tile incident to the perimeter of $R$ in a minimal-perimeter tiling is a vertical domino.
\end{corollary}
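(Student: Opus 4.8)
The plan is to combine the lower-bound machinery of Corollary~\ref{cor:at least 2 perimeter tiles} with the corner analysis of Lemma~\ref{lem:non-convex doesn't make perimeter} and the cloverleaf count of Lemma~\ref{lem:cloverleafs} to pin down simultaneously the orientation of the boundary-incident tiles and the global shape of $R$. First I would fix a tiling $D$ of $R$ with exactly two perimeter tiles. By the ``moreover'' clause of Corollary~\ref{cor:at least 2 perimeter tiles}, after rotating $R$ we may assume these two perimeter dominoes are vertical, with one exposing its entire left edge to $\partial R$ and the other exposing its entire right edge. Since these are the only perimeter tiles, every other tile meeting the boundary is at most a weak perimeter domino, and the two vertical caps must absorb all of the ``forced'' boundary structure.

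The first key step is to rule out horizontal boundary-incident tiles. A horizontal domino carries its two length-$2$ edges on its top and bottom; if such a domino touched $\partial R$ without itself being a perimeter domino, I would argue, using that $R$ is simply connected and tileable (so corners occur only in the two types of Lemma~\ref{lem:non-convex doesn't make perimeter}), that it must abut a boundary feature producing either a convex corner with two long edges or a length-$2$ horizontal overlap with $\partial R$. Either outcome forces a third perimeter tile by Lemma~\ref{lem:non-convex doesn't make perimeter}, contradicting the hypothesis. Hence every tile incident to the perimeter of $R$ is vertical, which is the second assertion of the corollary. Because a vertical domino can be a perimeter tile only through a fully exposed length-$2$ vertical edge, the only exposed length-$2$ vertical segments of $\partial R$ are at the two caps; every other vertical portion of the boundary has length $1$, which drives $R$ toward a width-$2$ staircase profile.

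The second key step is to show that $R$ is a single such strip, rather than a branched or more intricate region, and to identify its two ends with the two caps. Here I would invoke Lemma~\ref{lem:cloverleafs}(a): $R$ contains cloverleafs of all four orientations, and by Lemma~\ref{lem:cloverleafs}(b) each contains a perimeter tile. With only two perimeter tiles available, each vertical cap must serve two cloverleaf orientations at once, which is possible only when the two caps sit at opposite ends of a linear, unbranched width-$2$ strip; any branching, or any additional convex corner with two long edges, would introduce a cloverleaf whose perimeter tile is distinct from both caps. Combining the single-step staircase structure from the first step with this linearity isolates exactly the family of regions depicted in Figure~\ref{fig:2 perimeter tiles}, and the exclusion of horizontal boundary tiles established above gives the final clause about every perimeter-incident tile being a vertical domino.

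I expect the main obstacle to be the first step: formalizing cleanly why a horizontal boundary domino is incompatible with having only two perimeter tiles requires a careful case analysis of how $\partial R$ can meet such a domino — distinguishing convex corners, non-convex corners, and mere length-$1$ contacts — together with an appeal to tileability to close off the cases that do not immediately yield a convex corner with two long edges. A secondary subtlety, already anticipated by the explicit parallel-perimeter-domino conclusion of Corollary~\ref{cor:at least 2 perimeter tiles}, is verifying that the two ends of the staircase are genuinely flat length-$2$ caps rather than isolated single steps; this is precisely the point at which the parallel structure of that corollary supplies the needed rigidity.
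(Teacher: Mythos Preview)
The paper does not actually supply a proof of this corollary: it is stated immediately after the sentence ``This relies heavily on Lemma~\ref{lem:non-convex doesn't make perimeter} and Corollary~\ref{cor:at least 2 perimeter tiles},'' and no further argument is given. So there is nothing detailed to compare against; the paper is content to leave the derivation to the reader from those two results.

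Your plan is aligned with that hint: you invoke Corollary~\ref{cor:at least 2 perimeter tiles} to normalize the two perimeter tiles as vertical caps, and you use Lemma~\ref{lem:non-convex doesn't make perimeter} (and additionally Lemma~\ref{lem:cloverleafs}) to constrain the boundary. The overall architecture is sound and matches what the paper has in mind.

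That said, your ``first key step'' as written has a genuine gap. You claim that a horizontal domino touching $\partial R$ must either sit at a convex corner with two long edges or share a length-$2$ horizontal segment with $\partial R$. This is not automatic: a horizontal domino can meet $\partial R$ only along one of its short (length-$1$) vertical edges, away from any corner, without triggering either alternative. To exclude such a domino you need a different argument. One clean route is local: look at the unit boundary edge it covers and the unit square directly above (or below) that edge along $\partial R$; since vertical runs of $\partial R$ away from the two caps have length exactly $1$ (else a third vertical perimeter tile is forced), that adjacent square sits at a non-convex step, and the tile covering it is then forced to be a perimeter tile unless it too is vertical. Iterating this step-by-step along the boundary is what actually yields ``all boundary-incident tiles are vertical'' and simultaneously pins the red-edge lengths to $1$. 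Your sketch gestures at this (``every other vertical portion of the boundary has length $1$'') but derives it from the conclusion rather than proving it first; the dependency needs to be reversed.
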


\begin{figure}[htbp]
\begin{tikzpicture}[scale=.5]
\draw[thick, blue] (0,0) -- (0,2);
\draw[thick, blue] (13.5,2) -- (13.5,4);
\foreach \x in {(1.5,-1),(1.5,2),(3,-2),(3,3),(4.5,4),(7.5,-2),(9,-1),(10.5,0),(12,1),(12,4)} {\draw[thick, red] \x --++(0,1);}
\foreach \x in {(0,0),(0,2),(1.5,3),(1.5,-1),(3,4),(7.5,-1),(9,0),(10.5,1),(12,2),(12,4)} {\draw[thick] \x --++(1.5,0);}
\draw[thick] (4.5,5) -- (12,5);
\draw[thick] (3,-2) -- (7.5,-2);
\end{tikzpicture}
\caption{A region that can be tiled to have exactly two perimeter tiles. Blue segments must have length $2$, red segments must have length $1$, and the number of corners defined by red and black edges is unrestricted. Black segments have arbitrary positive lengths.}\label{fig:2 perimeter tiles}
\end{figure}
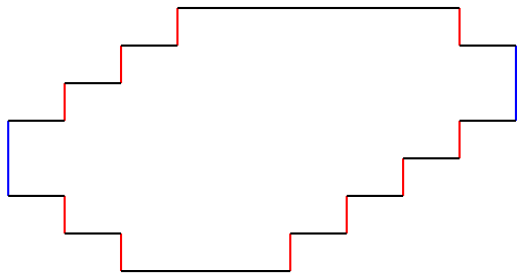

A specific example of Corollary~\ref{cor:minimal perimeter} appears in Figure~\ref{fig:2 perimeter example}.

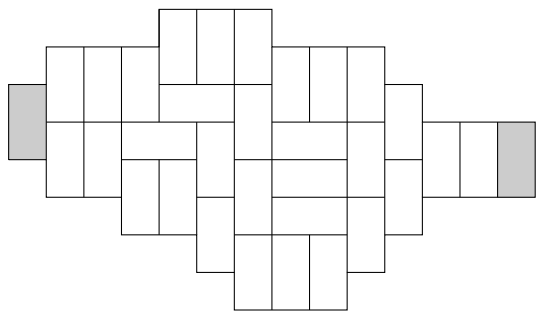
\begin{figure}[htbp]
\begin{tikzpicture}[scale=.5]
\fill[black!20] (0,0) rectangle (1,2);
\fill[black!20] (13,-1) rectangle (14,1);
\draw (0,0) -- (0,2) -- (1,2) -- (1,3) -- (4,3) -- (4,4) -- (7,4) -- (7,3) -- (10,3) -- (10,2) -- (11,2) -- (11,1) -- (14,1) -- (14,-1) -- (11,-1) -- (11,-2) -- (10,-2) -- (10,-3) -- (9,-3) -- (9,-4) -- (6,-4) -- (6,-3) -- (5,-3) -- (5,-2) -- (3,-2) -- (3,-1) -- (1,-1) -- (1,0) -- (0,0);
\draw (1,0) -- (1,2);
\draw (2,-1) -- (2,3);
\draw (3,-1) -- (3,3);
\draw (4,-2) -- (4,0); \draw (4,1) -- (4,4);
\draw (5,-2) -- (5,1); \draw (5,2) -- (5,4);
\draw (6,-3) -- (6,4);
\draw (7,-4) -- (7,3);
\draw (8,-4) -- (8,-2); \draw (8,1) -- (8,3);
\draw (9,-3) -- (9,3);
\draw (10,-2) -- (10,2);
\draw (11,-1) -- (11,1);
\draw (1,1) -- (6,1);
\draw (3,0) -- (5,0);
\draw (4,2) -- (7,2);
\draw (5,-1) -- (6,-1);
\draw (6,-2) -- (9,-2);
\draw (6,0) -- (9,0);
\draw (7,-1) -- (10,-1);
\draw (7,1) -- (10,1);
\draw (10,0) -- (11,0);
\draw (12,-1) -- (12,1);
\draw (13,-1) -- (13,1);
\end{tikzpicture}
\caption{A tiling of a region such that there are exactly two (shaded) perimeter tiles.}\label{fig:2 perimeter example}
\end{figure}

\section{Directions for further research}\label{sec:further research}

Regarding the rhombic tilings of Elnitsky polygons, it would be interesting to have analogues of the work in Section~\ref{sec:long element} for arbitrary permutations. The significance of such results to the combinatorics of reduced decompositions inspires questions in other directions, too. What do these properties tell us about structure in Coxeter graphs, or in the contracted graphs of \cite{bergeron-ceballos-labbe}? What do they imply for enumerative and structural features of the Bruhat order? Elnitsky's work extends to Coxeter groups of types $B$ and $D$, as well. What can we say about perimeter tiles in those settings?

From Proposition~\ref{prop:domino total}, we get a sense of the average number of perimeter dominoes in a randomly chosen domino tiling of a $2\times n$ rectangle. One could look at analogues of Proposition~\ref{prop:domino total} for arbitrary rectangles. One could similarly pursue the circumstances of perimeter dominoes that appear in domino tilings of non-rectangular regions, as suggested in the latter portion of Section~\ref{sec:domino}, including regions with nontrivial topology, like the ``holey square'' of \cite{tenner holey}.

In this work, we have only looked at two types of tilings, but, of course, there are infinitely many tiling-style problems, many of which have compelling mathematics. There are also scenarios that might be rephrased as a tiling-based isoperimetric question. What do our methods have to say in those applications? Do strong perimeter tiles bear extra significance there? Are weak perimeter tiles mathematically noteworthy, too? Can these tiles (either type) be enumerated, either directly or probabilistically?

\section*{Acknowledgements}

I am grateful for the detailed and thoughtful advice of an anonymous referee.

\end{document}